\newcounter{dummy}
\numberwithin{dummy}{section}
\newtheorem{thm}[dummy]{Theorem}
\newtheorem{lem}[dummy]{Lemma}
\newtheorem{prop}[dummy]{Proposition}
\newtheorem{cor}[dummy]{Corollary}
\theoremstyle{definition}
\newtheorem{rmk}[dummy]{Remark}
\newtheorem{example}[dummy]{Example}
\numberwithin{equation}{section}
\newcommand{\noop}[1]{}
\DeclareMathOperator{\End}{End}
\begin{document}

\title{Around the support problem for Hilbert class polynomials}
\date{\today}

\author[F. Campagna]{Francesco Campagna}
\address{Max-Planck-Institut für Mathematik, Vivatsgasse 7, 53111 Bonn, Germany}
\email{campagna@mpim-bonn.mpg.de}

\author[G. A. Dill]{Gabriel A. Dill}
\address{Institut f\"ur Algebra, Zahlentheorie und Diskrete Mathematik, Fakult\"at f\"ur Mathematik und Physik, Leibniz Universit\"at Hannover, Welfengarten 1, 30167 Hannover, Germany}
\email{dill@math.uni-hannover.de}

\subjclass[2020]{11G15, 11G18, 14G35}
\keywords{Complex multiplication, elliptic curve, greatest common divisor, singular modulus}
\date{\today}
\maketitle

\begin{abstract}
Let $H_D(T)$ denote the Hilbert class polynomial of the imaginary quadratic order of discriminant $D$. We study the rate of growth of the greatest common divisor of $H_D(a)$ and $H_D(b)$ as $|D| \to \infty$ for $a$ and $b$ belonging to various Dedekind domains. We also study the modular support problem: if for all but finitely many $D$ every prime ideal dividing $H_D(a)$ also divides $H_D(b)$, what can we say about $a$ and $b$? If we replace $H_D(T)$ by $T^n-1$ and the Dedekind domain is a ring of $S$-integers in some number field, then these are classical questions that have been investigated by Bugeaud--Corvaja--Zannier, Corvaja--Zannier, and Corrales-Rodrig\'a\~{n}ez--Schoof.
\end{abstract}

\section{Introduction}

The starting point of this paper is the following theorem by Bugeaud, Corvaja, and Zannier.

\begin{thm}[Theorem 1 in \cite{Bugeaud_Corvaja_Zannier_2003}] \label{thm:BCZ}
Let $a,b$ be multiplicatively independent integers $\geq 2$, and let $\varepsilon>0$. Then, provided $n$ is sufficiently large, we have
\[
\gcd (a^n-1, b^n-1) < \mathrm{exp}(\varepsilon n).
\]
\end{thm}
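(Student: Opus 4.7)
My plan is to argue by contradiction using the Schmidt Subspace Theorem in its $S$-adic form (Evertse--Schlickewei). Suppose there exist $\varepsilon > 0$ and an infinite sequence $n_1 < n_2 < \cdots$ with
\[
d_k := \gcd(a^{n_k} - 1, b^{n_k} - 1) \geq \exp(\varepsilon n_k) \quad \text{for all } k.
\]

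Let $S$ be the set of places of $\mathbb{Q}$ consisting of the archimedean place together with all primes dividing $ab$, so that $a^n$ and $b^n$ are $S$-units for every $n$. I would apply the Subspace Theorem to the $S$-integer projective point $\mathbf{x}_k := (a^{n_k} : b^{n_k} : 1) \in \mathbb{P}^2(\mathbb{Q})$, with linear forms chosen to exploit the divisibilities $d_k \mid a^{n_k}-1$ and $d_k \mid b^{n_k}-1$. At the archimedean place, two of the forms should be $X_1 - X_3$ and $X_2 - X_3$, so that they evaluate to $a^{n_k}-1$ and $b^{n_k}-1$. Applying the product formula to these nonzero integers translates the divisibility by $d_k$ into a multiplicative saving of at least $d_k^2 \geq \exp(2\varepsilon n_k)$ in the global product. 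Balanced against the archimedean size contributions and the height $H(\mathbf{x}_k) \asymp \max(a,b)^{n_k}$, this produces (at least for $\varepsilon$ sufficiently large, a reduction to which follows by replacing $n_k$ with a sufficiently large constant multiple) a bound of the form
\[
\prod_{v \in S} \prod_i |L_{v,i}(\mathbf{x}_k)|_v \leq H(\mathbf{x}_k)^{-\delta}
\]
for some $\delta = \delta(\varepsilon, a, b) > 0$.

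The Subspace Theorem then forces the $\mathbf{x}_k$ to lie in finitely many proper linear subspaces of $\mathbb{Q}^3$. Restricting to an infinite subsequence contained in a single such subspace yields a nontrivial relation $\alpha a^{n_k} + \beta b^{n_k} + \gamma = 0$ holding for infinitely many $k$. An elementary archimedean size comparison, using that $a \neq b$ (which follows from the multiplicative independence of $a, b \geq 2$), shows that no such nontrivial relation can hold for infinitely many $n_k$, the desired contradiction.

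I expect the main technical obstacle to be the precise choice of linear forms and the verification of the Subspace Theorem inequality. The primes dividing $d_k$ typically do not lie in $S$ and are not contained in any fixed finite set as $k$ varies, so their combined $p$-adic contribution must be routed into the $S$-adic product via the product formula rather than by enlarging $S$. Handling this cleanly---packaging the ``gcd-saving'' of $d_k^2$ as a genuine gain inside the Subspace Theorem bound---is the technical heart of the argument, and essentially what makes the Subspace Theorem the right tool here.
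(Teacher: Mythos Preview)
The paper does not prove this theorem; it merely quotes it from Bugeaud--Corvaja--Zannier as the starting point and motivation for the article. So there is no ``paper's own proof'' to compare against. Your sketch is, however, aiming at the method actually used in \cite{Bugeaud_Corvaja_Zannier_2003}, namely the $p$-adic Subspace Theorem, so it is worth commenting on.

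There is a genuine gap in your plan. With the three-dimensional point $\mathbf{x}_k=(a^{n_k}:b^{n_k}:1)$ and only the two ``special'' forms $X_1-X_3$, $X_2-X_3$, the total gcd saving you can extract is $d_k^{2}\ge\exp(2\varepsilon n_k)$, whereas the height is $\max(a,b)^{n_k}$ and the archimedean loss from the product $|a^{n_k}-1|\cdot|b^{n_k}-1|$ is of order $(ab)^{n_k}$. The Subspace inequality therefore holds only when $2\varepsilon$ exceeds a fixed positive quantity depending on $a,b$, not for arbitrary $\varepsilon>0$. Your proposed reduction ``replace $n_k$ by a large constant multiple'' goes the wrong way: from $d_n\ge e^{\varepsilon n}$ one only gets $d_{Mn}\ge d_n\ge e^{(\varepsilon/M)\cdot Mn}$, so the effective exponent \emph{decreases}. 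There is no elementary amplification of $\varepsilon$ available here.

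The fix, and this is exactly what Bugeaud--Corvaja--Zannier do, is to amplify in the dimension rather than in $\varepsilon$: observe that $d_n\mid a^{in}b^{jn}-1$ for every pair $(i,j)\neq(0,0)$, and apply the Subspace Theorem to the point with coordinates $(a^{in}b^{jn})_{0\le i,j<N}$ in $\mathbb{P}^{N^2-1}$ for a large parameter $N=N(\varepsilon,a,b)$. One then has of the order of $N^2$ linear forms each contributing a saving of $d_n$, so the total gain $d_n^{\,\Theta(N^2)}$ beats the height $\max(a,b)^{\Theta(N)n}$ for $N$ large enough, regardless of how small $\varepsilon$ is. The endgame (finitely many subspaces, then a contradiction with multiplicative independence) is as you describe, though ruling out the resulting linear relations requires the full force of multiplicative independence rather than merely $a\neq b$.
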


Here, the left-hand side measures the size of the intersection of the Zariski closure in $\mathbb{G}^2_{m,\mathbb{Z}}$ of the singleton $\{(a,b)\} \subseteq \mathbb{G}_{m,\mathbb{Q}}^{2}(\mathbb{Q})$ with the kernel of the raising-to-the-$n$-th-power morphism. This is an intersection of two $1$-dimensional schemes inside a $3$-dimensional scheme. Thus, Theorem \ref{thm:BCZ} fits into the framework of  ``unlikely intersections" as introduced by Bombieri, Masser, and Zannier \cite{BMZ99}, Zilber \cite{Zilber}, and Pink \cite{PinkUnpubl}. As remarked by Zannier in \cite[Chapter 2]{Zannier_book}, Theorem \ref{thm:BCZ} can be regarded as an arithmetical analogue of results about unlikely intersections over fields of characteristic $0$. Silverman conjectured in \cite[Conjecture 1]{Silverman_2004} an analogue of Theorem \ref{thm:BCZ}, where the square of the multiplicative group is replaced by the square of an elliptic curve over the rationals. Assuming certain cases of Vojta's conjecture, Silverman proved a generalization of \cite[Conjecture 1]{Silverman_2004} to an arbitrary abelian variety over $\mathbb{Q}$ in \cite[Proposition 9]{Silverman_2005}. Unconditionally, nothing non-trivial is known at the time of writing in the setting of abelian varieties over number fields.

In this article, we replace $\mathbb{G}_m$ by the coarse moduli space of elliptic curves $Y(1) = \mathbb{A}^1$ and we study the analogue of Theorem \ref{thm:BCZ} and related questions in this context. This venture is inspired by the well-known fact that there is a notion of special subvarieties in both the realm of semiabelian varieties and the realm of mixed Shimura varieties. Let $F$ be a field. A \emph{special subvariety} of $\mathbb{G}^n_{m,F}$ is an irreducible component of an algebraic subgroup of $\mathbb{G}^n_{m,F}$. If $X_1,\hdots,X_n$ are affine coordinates on $Y(1)_F^n$, then a \emph{special subvariety} of $Y(1)_F^n$ is an irreducible component of the intersection of the zero loci of finitely many modular polynomials $\Phi_{N_k}(X_{i_k},X_{j_k})$ ($k = 1,\hdots,K$); see \cite[p. 55]{Lang_1987} for the definition of $\Phi_{N_k}$. In particular, a special point of $\mathbb{G}_{m,\mathbb{C}}$ is a root of unity and a special point of $Y(1)_{\mathbb{C}}$ is a \emph{singular modulus}, \emph{i.e}., the $j$-invariant of an elliptic curve with complex multiplication.

Theorem \ref{thm:BCZ} is about values of the polynomials $T^n-1$ ($n \in \mathbb{N} = \{1,2,\hdots\}$). These have the property that their zeroes are all special points of $\mathbb{G}_{m,\mathbb{C}}$. It then appears more natural to consider the family of cyclotomic polynomials $\Psi_n(T)$ ($n \in \mathbb{N}$), which are precisely the minimal polynomials over $\mathbb{Q}$ of the special points of $\mathbb{G}_{m,\mathbb{C}}$. Using the dictionary above, the analogue of this family in the $Y(1)$ case is precisely the family of \emph{Hilbert class polynomials} $H_D(T)$ with $D \in \mathbb{D}$, where $\mathbb{D} = \{-3,-4,\hdots\}$ is the set of negative integers $\equiv 0,1~\mathrm{mod}~4$ and $H_D(T) \in \mathbb{Z}[T]$ is the minimal polynomial over $\mathbb{Q}$ of any $j$-invariant of an elliptic curve with complex multiplication by the imaginary quadratic order of discriminant $D$. Thus, we are led to studying how large the greatest common divisor of $H_D(a)$ and $H_D(b)$ can be, where $a, b \in \mathbb{Z}$.

This question as well as more general divisibility questions also make sense with an arbitrary Dedekind domain $R$ in place of $\mathbb{Z}$. Indeed, for any polynomial with integer coefficients like for example $\Psi_n(T)$, $H_D(T)$, and $\Phi_N(X,Y)$, we obtain an associated polynomial with coefficients in $R$ by applying the unique ring homomorphism $\mathbb{Z} \to R$ to all coefficients. We will always denote this associated polynomial by the same symbol as the original one and also speak of Hilbert class polynomials, cyclotomic polynomials, modular polynomials, etc. with coefficients in $R$.

In Section \ref{sec:functionfield}, we begin by studying the case where $R$ is the coordinate ring of a smooth affine irreducible curve $\mathcal{C}$ over an algebraically closed field of characteristic $0$. We obtain the following result.

\begin{thm}[Theorem \ref{thm: gcd in function fields}]\label{thm: gcd in function fields intro}
Let $F$ be an algebraically closed field of characteristic $0$, let $R$ be the coordinate ring of a smooth affine irreducible curve $\mathcal{C}_{/F}$, and let $A,B \in R$. If $\Phi_N(A,B) \neq 0$ for all $N \in \mathbb{N}$, then there exists a non-zero ideal $J \subseteq R$ such that
\[ \gcd(H_{D_1}(A),H_{D_2}(B)) \mid J \quad \mbox{ or } \quad H_{D_1}(A)H_{D_2}(B) = 0  \]
for all $D_1, D_2 \in \mathbb{D}$.
\end{thm}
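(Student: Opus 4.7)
The argument splits according to whether $A$ and $B$ are constants in $F \subseteq R$. If $A = c \in F$, then for every $D_1 \in \mathbb{D}$ except possibly the unique one for which $c$ is a singular modulus, $H_{D_1}(c) \in F^{\times} \subseteq R^{\times}$, so the gcd ideal equals $R$; for that exceptional $D_1$ the product $H_{D_1}(A)H_{D_2}(B)$ vanishes. Taking $J = R$ handles this case, and a symmetric argument handles $B$ constant. Henceforth I assume both $A$ and $B$ are non-constant.

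Since $R$ is a Dedekind domain, its non-zero prime ideals are the maximal ideals $\mathfrak{m}_P$ indexed by the closed points $P \in \mathcal{C}$, with associated valuation $v_P$. As $H_D(T)$ factors over $\overline{\mathbb{Q}}$ as a product of distinct linear terms, a direct computation yields
\[ v_P\bigl(H_D(A)\bigr) = \begin{cases} e_P(A) & \text{if $A(P)$ is a singular modulus of discriminant $D$,}\\ 0 & \text{otherwise,}\end{cases} \]
where $e_P(A) := v_P(A - A(P))$ is a positive integer depending only on $P$ and $A$; likewise for $B$. Hence the $\mathfrak{m}_P$-exponent of $\gcd\bigl(H_{D_1}(A), H_{D_2}(B)\bigr)$ is at most $m_P := \max(e_P(A), e_P(B))$, and vanishes unless both $A(P)$ and $B(P)$ are singular moduli.

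The morphism $(A, B)\colon \mathcal{C} \to Y(1)^2_F = \mathbb{A}^2_F$ has image an irreducible curve $V$, which by hypothesis is not contained in any $\{\Phi_N=0\}$ and, because $A$ and $B$ are non-constant, is neither a vertical nor a horizontal line. After descent to a finitely generated subfield of $F$ over which $V$ is defined and an embedding into $\mathbb{C}$ (harmless since every singular modulus lies in $\overline{\mathbb{Q}}$), André's theorem on special points on curves in $Y(1)^2$ applies and implies that $V$ contains only finitely many special points. Since $(A, B)$ has finite fibres over its image, the set
\[ S := \{P \in \mathcal{C}(F) : \text{$A(P)$ and $B(P)$ are both singular moduli}\} \]
is finite. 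Setting $J := \prod_{P \in S} \mathfrak{m}_P^{m_P}$, a non-zero ideal of $R$, the previous paragraph shows that $J$ is contained in $\gcd\bigl(H_{D_1}(A),H_{D_2}(B)\bigr)$, equivalently that this gcd divides $J$, whenever $H_{D_1}(A)H_{D_2}(B) \neq 0$.

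The main obstacle is the appeal to André's finiteness theorem for special points on plane curves in $Y(1)^2$, which supplies the ``unlikely intersection'' input and is the one non-formal ingredient. The valuation-theoretic observation that $H_D(A)$ vanishes at $P$ to an order independent of $D$ is what converts this set-theoretic finiteness into a uniform ideal-theoretic bound.
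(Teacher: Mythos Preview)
Your proof is correct and follows essentially the same approach as the paper: dispose of the constant case with $J=R$, then for non-constant $A,B$ map $\mathcal{C}$ into $Y(1)^2$, apply Andr\'e's theorem to see only finitely many points $P$ have both $A(P)$ and $B(P)$ singular moduli, and bound the multiplicity at each such $P$ by the order of vanishing of $A-A(P)$ (or $B-B(P)$), which is independent of $D$. Your write-up is in fact slightly more explicit than the paper's about the descent to a finitely generated field for Andr\'e and about the exact valuation computation $v_P(H_D(A))=e_P(A)$, but the underlying argument is the same.
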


For $T^n-1$ in place of $H_D(T)$ and $\mathcal{C} = \mathbb{A}^1_F$, Ailon and Rudnick have proved the analogue of Theorem \ref{thm: gcd in function fields intro} as the first part of \cite[Theorem 1]{AilonRudnick} using the Manin-Mumford conjecture for plane curves, \emph{i.e.},  the theorem of Ihara--Serre--Tate \cite{Lang_1965}. Our result similarly follows from the Andr\'e-Oort conjecture for plane curves, \emph{i.e.}, from the theorem of Andr\'e \cite{Andre}, which is the modular counterpart of the theorem of Ihara--Serre--Tate. Silverman also studied the analogue of this question in the elliptic setting, but managed to obtain a comparable result only if the elliptic curve is isotrivial, see \cite[Conjecture 7, Theorem 8, and Remark 5]{Silverman_2004}. One may wonder whether a similar theorem also holds if the  characteristic of $F$ is positive. Our next theorem, which we prove using the $abc$ theorem for function fields by Mason \cite{Mason}, shows that this is not the case if $F$ is an algebraic closure of a finite field.

\begin{thm}[Theorem \ref{thm: gcd singular moduli positive characteristic}]\label{thm:gcd singular moduli char p intro}
Let $p \in \mathbb{N}$ be prime and fix an algebraic closure $F = \overline{\mathbb{F}}_p$ of $\mathbb{F}_p$. Let $R$ be the coordinate ring of a smooth affine irreducible curve $\mathcal{C}_{/F}$ and let $A, B \in R\backslash F$. Then
\[
\limsup_{D \in \mathbb{D},\hspace{2pt}|D| \to \infty} \frac{\deg \left(\gcd(H_D(A), H_D(B)) \right)}{\deg H_D} > 0,
\]
where $\deg$ is as defined in Subsection \ref{subsec:deg}.
\end{thm}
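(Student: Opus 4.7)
The plan is to exploit the inseparability of Hilbert class polynomials modulo $p$ at discriminants divisible by a high power of $p$, combined with Mason's abc theorem for function fields. Fix a fundamental imaginary quadratic discriminant $D_K$ coprime to $p$, and consider the family $D_n = p^{2n} D_K$ for $n \geq 1$. By Deuring's reduction theory---using that an ordinary elliptic curve in characteristic $p$ always has maximal endomorphism ring---the $h(D_n) \sim p^n h(D_K)$ singular moduli in characteristic $0$ with endomorphism ring $\mathbb{Z} + p^n \mathcal{O}_K$ must all reduce onto the finite set $Z_{D_K} \subset \overline{\mathbb{F}}_p$ of reductions of $\mathcal{O}_K$-CM singular moduli (of size $h(D_K)$). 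Choosing a prime above $p$, this yields a factorization in $F[T]$,
\[
H_{D_n}(T) \;=\; \prod_{\alpha \in Z_{D_K}} (T - \alpha)^{m_{\alpha, n}},
\]
with multiplicities $m_{\alpha, n}$ each of order $p^n$.

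Evaluating at $A, B \in R$ and using the identity $\gcd(I^a, J^b) = \gcd(I, J)^{\min(a, b)}$ in the Dedekind domain $R$, one obtains
\[
\deg \gcd(H_{D_n}(A), H_{D_n}(B)) \;\geq\; \bigl(\min_\alpha m_{\alpha, n}\bigr) \cdot \sum_{(\alpha, \beta) \in Z_{D_K}^2} \deg \gcd(A - \alpha, B - \beta).
\]
Since $\deg H_{D_n} = h(D_n)$ is also of order $p^n$, the ratio tends to a positive constant depending only on $D_K$, provided the image curve $(A, B)(\mathcal{C}) \subset \mathbb{A}^2_F$ passes through at least one point of $Z_{D_K}^2$.

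To ensure such an intersection for some $D_K$, I would argue via Mason's abc theorem in the function field $F(\mathcal{C})$. Suppose for contradiction that for every fundamental discriminant $D_K$ and every pair $(\alpha, \beta) \in Z_{D_K}^2$ we have $\gcd(A - \alpha, B - \beta) = 1$ in $R$. Then the rational functions $(A - \alpha)/(B - \beta) \in F(\mathcal{C})^*$ are $S$-units (for $S$ consisting of the points at infinity of $\mathcal{C}$ together with the zero loci of the $A - \alpha$ and $B - \beta$), and combining these with the constant-shift identities $(A - \alpha_1) - (A - \alpha_2) = \alpha_2 - \alpha_1$ for distinct $\alpha_i \in Z_{D_K}$ yields a family of $S$-unit equations whose size grows with $|Z_{D_K}|$. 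Mason's inequality (applied after a preliminary Frobenius descent to ensure $A$ or $B$ is not a $p$-th power) then bounds the heights of the involved functions by a constant multiple of $|S| + 2g(\mathcal{C}) - 2$, which combined with the non-intersection hypothesis forces $|Z_{D_K}| = h(D_K)$ to remain bounded---contradicting Brauer--Siegel's $h(D_K) \to \infty$ as $|D_K| \to \infty$.

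The main obstacle is the Mason-based argument in the third step: the precise $S$-unit configuration extracted from the non-intersection hypothesis must be chosen carefully to yield a contradiction, and the inert and ramified cases of $p$ in $\mathcal{O}_K$ (where $Z_{D_K}$ must be replaced by a subset of the supersingular locus $S_p$) require parallel treatment. A Frobenius descent is also essential for the applicability of Mason's bound in positive characteristic.
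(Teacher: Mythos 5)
Your amplification step is essentially the paper's own: from a single discriminant $D_0$ and a single point of $\mathcal{C}(F)$ at which both $H_{D_0}(A)$ and $H_{D_0}(B)$ vanish, the congruence $H_{D_0p^{2n}}(X)\equiv H_{D_0}(X)^{k_n}\bmod p$ (Proposition \ref{prop:Hilbert_class_polynomial_mod_p}) makes the gcd grow proportionally to $\deg H_{D_0p^{2n}}$. Two small repairs there: the identity $\gcd(I^a,J^b)=\gcd(I,J)^{\min(a,b)}$ is false in general (only the divisibility $\gcd(I,J)^{\min(a,b)}\mid\gcd(I^a,J^b)$, which is all you need, holds), and an ordinary elliptic curve over $\overline{\mathbb{F}}_p$ has endomorphism ring of conductor prime to $p$, not necessarily the maximal order.

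The genuine gap is your third step, the only place where $A,B\notin F$ is used: you must produce at least one discriminant $D$ and one point $x\in\mathcal{C}(F)$ with $H_D(A(x))=H_D(B(x))=0$, and the sketched Mason/Brauer--Siegel contradiction does not deliver this. As set up, the functions $(A-\alpha)/(B-\beta)$ are $S$-units only because you put all zeros of the $A-\alpha$ and $B-\beta$ into $S$; this is true unconditionally, uses the non-intersection hypothesis nowhere, and makes $|S|$ grow linearly in $|Z_{D_K}|$, so Mason's bound $|S|+2g-2$ grows along with the quantity you hope to bound. Moreover, every function in your unit equations ($A-\alpha$, $B-\beta$, constants) has degree at most $\max(\deg A,\deg B)$, so there is no growing height to play off against Mason's inequality, and the conclusion that this ``forces $h(D_K)$ to remain bounded'' is unsupported; note also that when $p$ is inert or ramified in $\mathbb{Q}(\sqrt{D_K})$ the set $Z_{D_K}$ lies in the supersingular locus and is bounded independently of $D_K$, so bounding $|Z_{D_K}|$ would not contradict Brauer--Siegel anyway. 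The paper's mechanism is where Mason genuinely bites: write $A=A_0^{p^m}$ with $A_0$ not a $p$-th power and apply Mason's lemma to $A_0-B^{p^n}-P_n=0$ with $P_n:=A_0-B^{p^n}$; since $\deg\left(A_0/(-B^{p^n})\right)\geq p^n\deg B-\deg A_0\to\infty$, for $n$ large the three terms cannot have equal vanishing orders outside any fixed finite set, forcing a zero $x$ of $P_n$ there, i.e.\ $A(x)=B(x)^{p^{m+n}}$. Because each $H_D\bmod p$ has coefficients in $\mathbb{F}_p$ and every element of $\overline{\mathbb{F}}_p$ is a root of some $H_D\bmod p$ (Deuring lifting plus the fact that every elliptic curve over $\overline{\mathbb{F}}_p$ has complex multiplication), such a point gives $H_D(A(x))=H_D(B(x))=0$ for a suitable $D$, which is exactly the seed your amplification step needs; without this (or an equivalent) argument your proof does not go through.
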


In the multiplicative setting, Silverman proved a more precise analogue of Theorem \ref{thm:gcd singular moduli char p intro} as \cite[Theorem 4]{Silverman_2004b} in the case where $\mathcal{C} = \mathbb{A}^1_F$. He also considered the elliptic case for the same $\mathcal{C}$, but again obtained a comparable result only under the added condition of isotriviality, see \cite[Conjecture 9 and Theorem 10]{Silverman_2004}. A related question has been studied in the modular setting by Edixhoven and Richard in \cite{Edixhoven_Richard_2018}, see also Richard's work \cite{Richard_Preprint_2018} over $\mathbb{Z}$.

We finally come to the modular counterpart of Theorem \ref{thm:BCZ} and its generalization to arbitrary number fields.

\begin{thm}[Theorem \ref{thm: BCZ_fails}]\label{thm: BCZ_fails intro}
Let $K$ be a number field and let $S$ be a finite set of maximal ideals of $\mathcal{O}_K$. Consider two elliptic curves ${E_1}_{/K}, {E_2}_{/K}$ with potential good reduction outside of $S$, \emph{i.e.}, such that the $j$-invariant $j(E_i) \in \mathcal{O}_{K,S}$ for $i = 1,2$. Suppose that there exists a prime ideal $\mathfrak{p} \subseteq \mathcal{O}_{K,S}$ at which both $E_1$ and $E_2$ have potential good supersingular reduction. Let $p$ denote the rational prime lying under $\mathfrak{p}$. Then

\[ \limsup_{D \in \mathbb{D},\hspace{2pt}|D| \to \infty}{(\deg H_D)^{-1}\log N(\gcd(H_D(j(E_1)),H_D(j(E_2))))} \geq \frac{\log p}{p-1} > 0,\]

where $N(\cdot)$ denotes the ideal norm in $\mathcal{O}_{K,S}$.
\end{thm}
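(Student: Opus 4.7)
My plan is to localize at $\mathfrak{p}$ and bound the valuation $v_{\mathfrak{p}}(H_D(j(E_i)))$ from below by counting singular moduli of discriminant $D$ whose mod-$\mathfrak{p}$ reduction agrees with that of $j(E_i)$. Fix a prime $\mathfrak{P}$ of $\overline{\mathbb{Q}}$ above $\mathfrak{p}$, let $w$ be the associated valuation of $\overline{\mathbb{Q}}^{\times}$ normalized by $w(p)=1$, and write $\bar\jmath_i \in \overline{\mathbb{F}}_p$ for the image of $j(E_i)$ under reduction at $\mathfrak{P}$; by hypothesis both $\bar\jmath_i$ are supersingular. Since $\log N(\mathfrak{p})=f_{\mathfrak{p}}\log p$ and $v_{\mathfrak{p}}|_K = e_{\mathfrak{p}} \cdot w|_K$, one has
\[
\log N(\gcd(H_D(j(E_1)),H_D(j(E_2)))) \;\geq\; f_{\mathfrak{p}}\log p \cdot e_{\mathfrak{p}} \cdot \min_i w(H_D(j(E_i))),
\]
so it suffices to lower-bound $e_{\mathfrak{p}} \cdot w(H_D(j(E_i)))$.

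I would then restrict to the positive-density subsequence of $D \in \mathbb{D}$ with $\gcd(D,p)=1$ and Kronecker symbol $(D/p) = -1$: for these, $p$ is inert in $\mathcal{O}_D$, so by Deuring's reduction criterion every singular modulus $\tau$ of discriminant $D$ reduces (modulo $\mathfrak{P}$) to a supersingular $j$-invariant, and the ring class field $H(\mathcal{O}_D)$ is unramified above $p$. Expanding
\[
w(H_D(j(E_i))) \;=\; \sum_{\tau} w(j(E_i)-\tau)
\]
over singular moduli $\tau$ of discriminant $D$, only those with $\bar\tau = \bar\jmath_i$ contribute, and for each such $\tau$ the element $j(E_i)-\tau$ lies in the compositum $K \cdot H(\mathcal{O}_D)$ whose prime compatible with $\mathfrak{P}$ has ramification index exactly $e_{\mathfrak{p}}$ over $p$. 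Hence $w(j(E_i)-\tau) \in (1/e_{\mathfrak{p}})\mathbb{Z}$ is $\geq 1/e_{\mathfrak{p}}$ when positive, giving
\[
e_{\mathfrak{p}} \cdot w(H_D(j(E_i))) \;\geq\; \#\{\tau \,:\, \bar\tau = \bar\jmath_i\}.
\]

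The remaining ingredient is a Deuring--Eichler count. By Deuring's correspondence, singular moduli of discriminant $D$ reducing to a fixed supersingular $\bar E$ correspond to optimal embeddings $\mathcal{O}_D \hookrightarrow \operatorname{End}(\bar E)$ modulo $\operatorname{Aut}(\bar E)$. Combining Eichler's mass formula $\sum_{[\bar E]} |\operatorname{Aut}(\bar E)|^{-1} = (p-1)/24$ with equidistribution of reductions of singular moduli in the supersingular locus (due in various forms to Duke, Michel, Jetchev--Kane, and others) gives, along our subsequence,
\[
\frac{\#\{\tau \,:\, \bar\tau = \bar\jmath_i\}}{\deg H_D} \;\xrightarrow{|D| \to \infty}\; \frac{24/|\operatorname{Aut}(\bar E_i)|}{p-1}.
\]
Since $|\operatorname{Aut}(\bar E_i)| \leq 24$ and $f_{\mathfrak{p}} \geq 1$, assembling the pieces yields
\[
\limsup_{|D| \to \infty} \frac{\log N(\gcd(H_D(j(E_1)),H_D(j(E_2))))}{\deg H_D} \;\geq\; \frac{f_{\mathfrak{p}}\log p}{p-1} \;\geq\; \frac{\log p}{p-1}.
\]
The main obstacle I foresee is the equidistribution ingredient, which is a deep theorem; however, the stated constant $\log p/(p-1)$ is far from tight (the argument naturally produces $24 f_{\mathfrak{p}}\log p/(\max_i |\operatorname{Aut}(\bar E_i)| \cdot (p-1))$), so one could likely substitute a direct quantitative Deuring--Eichler embedding-number computation identifying a subsequence of $D$ on which $\#\{\tau \,:\, \bar\tau = \bar\jmath_i\}/\deg H_D$ stays uniformly bounded below.
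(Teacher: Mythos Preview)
Your proposal is correct and follows essentially the same approach as the paper: both arguments factor $H_D(j(E_i))=\prod_\tau(j(E_i)-\tau)$, restrict to discriminants with $p$ inert (so the ring/Hilbert class field is unramified above $p$, making each congruent factor contribute at least one unit of $\mathfrak{p}$-valuation), and then invoke Michel's equidistribution of CM points in the supersingular locus to guarantee that at least a $(p-1)^{-1}$-fraction of the roots $\tau$ reduce to $\bar\jmath_i$. The only cosmetic differences are that the paper restricts to fundamental discriminants and packages the $(p-1)^{-1}$ bound into its restatement of Michel's theorem, whereas you allow all $D$ coprime to $p$ and recover the constant yourself via Eichler's mass formula together with $|\mathrm{Aut}(\bar E_i)|\leq 24$; you also track $e_{\mathfrak{p}},f_{\mathfrak{p}}$ explicitly while the paper argues directly with containment in powers of $\mathfrak{p}$.
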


In particular, Theorem \ref{thm: BCZ_fails intro} shows that the naive analogue of Theorem \ref{thm:BCZ} over number fields is false. Namely, the condition in Theorem \ref{thm:BCZ} that $a$ and $b$ are multiplicatively independent is equivalent to demanding that the point $(a,b) \in \mathbb{G}_{m,\mathbb{C}}^2(\mathbb{C})$ is not contained in any proper special subvariety of $\mathbb{G}^2_{m,\mathbb{C}}$. In the modular setting, this translates into the condition that $a$ and $b$ are both not singular moduli and that $\Phi_N(a,b) \neq 0$ for all $N \in \mathbb{N}$. However, we will show that there are infinitely many such pairs $(a,b) \in \mathcal{O}_{K,S}^2$ such that $a$ and $b$ are the $j$-invariants of elliptic curves with a fixed common prime ideal of potential good supersingular reduction. On the other hand, proving that two general elliptic curves have a common supersingular prime is a difficult open problem, already if they are defined over $\mathbb{Q}$. Conjecturally, there are infinitely many common supersingular primes if both $E_1$ and $E_2$ do not have complex multiplication, are defined over $\mathbb{Q}$, and are not geometrically isogenous; see \cite{LangTrotter} and \cite{Fouvry_Murty_1995}, where also an averaged version of this conjecture is proved.

There are other examples where the arithmetic behaviour in Shimura varieties and in algebraic groups is different. Let us mention, for example, the $S$-integrality properties of special points with respect to a divisor in $\mathbb{G}_{m,\overline{\mathbb{Q}}}$ and $Y(1)_{\overline{\mathbb{Q}}}$ respectively, where $S$ is a finite set of rational primes and $\overline{\mathbb{Q}}$ will always denote a fixed algebraic closure of $\mathbb{Q}$: for $\mathbb{G}_{m,\overline{\mathbb{Q}}}$, Baker, Ih, and Rumely proved in \cite[Theorem~0.1]{BIR_2008} that there are at most finitely many special points that are $S$-integral with respect to some non-special point $P$, but this fails to hold if $P$ itself is special. For $Y(1)_{\overline{\mathbb{Q}}}$, Habegger proved in \cite[Theorem 2]{Habegger_2015} that there are at most finitely many special points that are $\emptyset$-integral with respect to an \emph{arbitrary} finite non-empty set of points, which can also consist of only one special point.

In order to prove Theorem \ref{thm: BCZ_fails intro}, we apply \cite[Theorem 3]{Michel_2004} by Michel to find arbitrarily large discriminants $D \in \mathbb{D}$ such that many zeroes of $H_D(T)$ reduce to the reductions of $j(E_1)$ and $j(E_2)$ respectively modulo a fixed prime ideal $\mathfrak{P}$ that lies over $\mathfrak{p}$ in a fixed algebraic closure of $K$. For our argument to work, it is essential that each such $D$ is coprime to $p$ and that therefore $\mathfrak{p}$ ramifies only very little in the splitting field of $H_D(T)$ over $K$.

Supersingularity seems to be a fundamental feature of the modular world that the multiplicative world is lacking and that explains some of the differences between the two worlds: it allows several distinct special points in the same Galois orbit to reduce to the same element modulo a maximal ideal lying over some rational prime $p$ while $p$ remains unramified in the corresponding ring class field. Multiplying the discriminant of a singular modulus or the order of a root of unity by powers of $p$ also leads to several distinct elements of the Galois orbit having equal reductions modulo a maximal ideal above $p$ (see Proposition \ref{prop:Hilbert_class_polynomial_mod_p} for the modular case), but at the same time introduces arbitrarily large ramification over $p$. It seems likely that the supersingular primes are the only obstacle to proving an analogue of Theorem \ref{thm:BCZ} in the modular case. In particular, we can ask for the rate of growth of the norm of the g.c.d. of $H_{D_1}(a)$ and $H_{D_2}(b)$ deprived of all common supersingular prime factors for $a$, $b$ in some ring of $S$-integers such that $\Phi_N(a,b) \neq 0$ for all $N \in \mathbb{N}$ and neither $a$ nor $b$ is a singular modulus.

When trying to understand the size of the greatest common divisor of $H_D(a)$ and $H_D(b)$ for $a$ and $b$ in some Dedekind domain $R$, we were led to consider the extreme case where every prime dividing $H_D(a)$ also divides $H_D(b)$ for all but finitely many $D \in \mathbb{D}$. For which $a$ and $b$ is this possible? This is the modular instance of the so-called \emph{support problem}. If we again replace the polynomials $H_D(T)$ ($D \in \mathbb{D}$) by the polynomials $T^n-1$ ($n \in \mathbb{N}$), this becomes the \emph{multiplicative support problem}. In the case where $R$ is the ring of $S$-integers in some number field, Corrales-Rodrig\'a\~{n}ez and Schoof have solved this problem in \cite{Corrales-Rodriganez_Schoof_1997}, thereby answering a question by Erd\H{o}s at the $1988$ number theory conference in Banff. Corrales-Rodrig\'a\~{n}ez and Schoof also solved an analogue of this problem with an elliptic curve in place of the multiplicative group, see \cite[Theorem 2]{Corrales-Rodriganez_Schoof_1997}. Later, Larsen solved the problem in the case of an arbitrary abelian variety \cite{Larsen_2003} and Perucca generalized his result to split semiabelian varieties \cite{Perucca_2009}. It is interesting to note that the conclusions of all these results are invariant under localization of the base ring at some finite set of maximal ideals although their hypotheses depend on the base ring. This will also be the case in our own results. One can also investigate the support problem as well as possible analogues of Theorem \ref{thm:BCZ} in other Shimura varieties than $Y(1)^2$ or in arithmetic dynamics (for the latter, cf. \cite[Section 5]{Matsuzawa_Preprint}).

In Section \ref{sec:supportproblem}, we revisit the multiplicative support problem as well as its cyclotomic counterpart, where $T^n-1$ is replaced by $\Psi_n(T)$ for $n \in \mathbb{N}$. We show how the theorem of Corrales-Rodrig\'a\~{n}ez--Schoof \cite{Corrales-Rodriganez_Schoof_1997} and the theorem of Ihara--Serre--Tate \cite{Lang_1965} can be used to give the following comprehensive solution to the multiplicative and cyclotomic support problem in characteristic $0$, which is best possible in all cases.

\begin{thm}\label{thm:multiplicative_and_cyclotomic_support_problem_char_0}
Fix $N_0 \in \mathbb{N}$.

Let $K$ be a number field, let $S$ be a finite set of maximal ideals of $\mathcal{O}_K$, and let $a,b \in \mathcal{O}_{K,S}\backslash\{0\}$. Then the following hold:
\begin{enumerate}
    \item Suppose that for all $n \in \mathbb{N}$ with $n > N_0$, every prime ideal of $\mathcal{O}_{K,S}$ that divides $a^n-1$ also divides $b^n-1$. Then $b = a^k$ for some $k \in \mathbb{Z}$.
    \item Suppose that for all $n \in \mathbb{N}$ with $n > N_0$, every prime ideal of $\mathcal{O}_{K,S}$ that divides $\Psi_n(a)$ also divides $\Psi_n(b)$. Then either $a$ and $b$ are roots of unity of the same order or $b = a^{\pm 1}$.
\end{enumerate}

Let $R$ be the coordinate ring of a smooth affine irreducible curve $\mathcal{C}$ over an algebraically closed field $F$ of characteristic $0$, and let $A, B \in R \backslash F$. Then the following hold:

\begin{enumerate}
\setcounter{enumi}{2} 
    \item Suppose that for all $n \in \mathbb{N}$ with $n > N_0$, every prime ideal of $R$ that divides $A^n-1$ also divides $B^n-1$. Then $B = A^k$ for some $k \in \mathbb{Z}\backslash\{0\}$.
    \item Suppose that for all $n \in \mathbb{N}$ with $n > N_0$, every prime ideal of $R$ that divides $\Psi_n(A)$ also divides $\Psi_n(B)$. Then $B = A^{\pm 1}$.
\end{enumerate}
\end{thm}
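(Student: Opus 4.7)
The plan is to reduce the four parts to established results: parts (1) and (2) to the theorem of Corrales-Rodrig\'a\~{n}ez--Schoof \cite{Corrales-Rodriganez_Schoof_1997}, and parts (3) and (4) to the Ihara--Serre--Tate theorem \cite{Lang_1965} (the Manin--Mumford conjecture for $\mathbb{G}_{m,F}^2$). In each setting, the cyclotomic case reduces to the multiplicative one, followed by a further step using the full strength of the hypothesis to pin the exponent down to $\pm 1$.

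For (1), after enlarging $S$ so that $a, b \in \mathcal{O}_{K,S}^{\times}$, fix $N > N_0$ and set $a' := a^N$, $b' := b^N$: the hypothesis transfers (since $Nm > N_0$ for every $m \in \mathbb{N}$) into the original Corrales-Rodrig\'a\~{n}ez--Schoof hypothesis, yielding $b^N = a^{Nk}$, so $b = \zeta a^k$ for some $N$-th root of unity $\zeta \in K$. Repeating with two coprime values $N_1, N_2 > N_0$ and comparing the resulting equations forces $\zeta = 1$ provided $a$ is not a root of unity; the root-of-unity case for $a$ is handled by noting that $a^n - 1 = 0$ for $n$ a large multiple of $\mathrm{ord}(a)$ forces $b^n = 1$ via the hypothesis, and comparing two consecutive such multiples yields $b^{\mathrm{ord}(a)} = 1$. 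For (2), write $a^n - 1 = \prod_{d \mid n} \Psi_d(a)$ and split its prime divisors according to whether $d > N_0$ or $d \leq N_0$: the former are controlled by the hypothesis, while the latter form a finite set $T$ independent of $n$. Enlarging $S$ by $T$ and applying (1) gives $b = a^k$. To force $k = \pm 1$ (assuming $a$ is not a root of unity), Zsygmondy-type primitive prime divisors of $\Psi_n(a)$ supply, for $n > N_0$ large, a prime $\mathfrak{p} \nmid n$ with $\mathrm{ord}_{\mathfrak{p}}(a) = n$; the hypothesis then forces $\mathfrak{p} \mid \Psi_n(a^k)$, i.e., $a^k \bmod \mathfrak{p}$ is also a primitive $n$-th root, giving $\gcd(n, k) = 1$ cofinitely and hence $k = \pm 1$. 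The root-of-unity case (where $b = a^j$) requires showing $\gcd(j, \mathrm{ord}(a)) = 1$; this is handled by using Dirichlet's theorem to produce an auxiliary prime $\ell \equiv 1 \pmod{\mathrm{ord}(b)}$ for which $\Psi_{\mathrm{ord}(a)\cdot\ell}(a)$ has a prime divisor above $\ell$ while $\Psi_{\mathrm{ord}(a)\cdot\ell}(b)$ is a unit.

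For (3) and (4), consider the dominant morphism $(A, B) \colon \mathcal{C} \to \mathbb{G}_{m,F}^2$ defined on the open locus where $A$ and $B$ are non-zero, and let $\mathcal{C}'$ be its image curve. At every closed point $x$ with $A(x)$ a root of unity of order $m > N_0$, the hypothesis forces $B(x)$ to be an $m$-th root of unity in (3) and a primitive $m$-th root of unity in (4); since $A \notin F$, $\mathcal{C}'$ contains infinitely many torsion points. By Ihara--Serre--Tate, $\mathcal{C}'$ is a torsion coset $\{A^p B^q = \zeta\}$ with $\gcd(p, q) = 1$ and $p, q \neq 0$ (the latter because $A, B \notin F$). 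Choosing $x$ with $A(x)$ a primitive $n$-th root of unity for $n > N_0$ prime forces $\zeta^n = 1$; letting $n$ range over primes yields $\zeta = 1$. A Bezout computation ($up + vq = 1$, $C := A^v B^{-u}$) combined with the integral closedness of $R$ in its fraction field produces an element $C \in R$ with $A = C^q$ and $B = C^p$. For (3), taking $x$ with $C(x)$ of order $qr$ for $r > N_0$ coprime to $pq$ gives $A(x)$ of order $r$, so by hypothesis $B(x)^r = 1$, but $B(x) = C(x)^p$ has order $qr$, forcing $q = \pm 1$ and $B = A^{\pm p}$. For (4), the stronger primitive-root version of the hypothesis, applied symmetrically (once with $C(x)$ of order $qr$ and once with $C(x)$ of order $pr$), forces both $|p| = |q| = 1$, yielding $B = A^{\pm 1}$.

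The main conceptual work lies in the final arithmetic tightenings: extracting $\zeta = 1$ and $|p|, |q| = 1$ from the Manin--Mumford conclusion via strategic choice of closed points and iterated use of the hypothesis at infinitely many $n$; and, in (2), the Dirichlet-type argument in the root-of-unity case. The bookkeeping around $N_0$ and the enlargements of $S$ is routine but pervasive.
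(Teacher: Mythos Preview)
Your proposal is correct and works, but it diverges from the paper's proof in several places.

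For (1), the paper simply enlarges $S$ to absorb the finitely many prime factors of $\prod_{i \leq N_0}(a^i-1)$ (when $a$ is not a root of unity), reducing directly to the all-$n$ version of Corrales-Rodrig\'a\~{n}ez--Schoof; your passage to $a^N, b^N$ achieves the same end with extra bookkeeping.

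For (2), the paper never invokes Zsygmondy. Once $a$ is not a root of unity and $S$ is enlarged, it shows the cyclotomic support property holds in \emph{both} directions (via the order-modulo-$\mathfrak{p}$ congruence $\Psi_{kp^\ell}(T) \equiv \Psi_k(T)^{(p-1)p^{\ell-1}} \pmod p$), whence the $T^n-1$ support property holds in both directions, and applying (1) twice immediately yields $k = \pm 1$. Your Zsygmondy argument is a valid alternative but imports a heavier tool. One caveat: your reduction of (2) to (1) via the finite set $T$ breaks down when $a$ is a root of unity of order $m \leq N_0$, since then $\Psi_m(a)=0$ and $T$ is not finite. You must therefore treat that case directly---which your Dirichlet sketch essentially does, though the condition $\ell \equiv 1 \pmod{\mathrm{ord}(b)}$ is unnecessary (any large prime $\ell \nmid \mathrm{ord}(a)$ works). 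The paper handles this case in two lines via the congruence above.

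For (3) and (4), your ordering differs from the paper's and is arguably cleaner: you first show the torsion translate is trivial ($\zeta = 1$) by evaluating at points where $A(x)$ has large prime order, then extract a common base $C$ via B\'ezout to pin down the exponents. The paper instead works directly with the relation $A^k B^\ell = \eta$, first forcing $\ell = \pm 1$ by a delicate choice of root-of-unity specialization (tracking exact orders through the relation), then $\eta = 1$, then (for (4)) $k = \pm 1$, never introducing $C$. Your $\zeta = 1$ step is genuinely simpler than the paper's; just take care with signs (B\'ezout gives $A = C^q$, $B = C^{-p}$ rather than $B = C^p$) and with the argument that one of $C, C^{-1}$ lies in $R$, which needs a short case split on the signs of $p,q$ together with integral closedness of $R$.
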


Part (1) of Theorem \ref{thm:multiplicative_and_cyclotomic_support_problem_char_0} is the theorem of Corrales-Rodrig\'a\~{n}ez--Schoof \cite{Corrales-Rodriganez_Schoof_1997}, the other parts are the contents of Theorem \ref{thm: cyclotomic_support_problem} and Corollary \ref{thm:g_m_multiplicative_support_problem_function_field_char_0}.

In Section \ref{sec:modularsupportproblem}, we finally consider the \emph{modular support problem}. We first solve the function field version of the problem.

\begin{thm}[Theorem \ref{thm: support problem in function fields}]\label{thm: support problem in function fields intro}
Let $R$ be the coordinate ring of a smooth affine irreducible curve $\mathcal{C}$ over an algebraically closed field $F$ of characteristic $0$. Let $A, B \in R \backslash F$ and suppose that there exists $D_0 \in \mathbb{N}$ such that for all discriminants $D \in \mathbb{D}$ with $|D|>D_0$ every prime ideal of $R$ that divides $H_D(A)$ also divides $H_D(B)$. Then $A=B$.
\end{thm}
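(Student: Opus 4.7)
My plan is to combine Theorem \ref{thm: gcd in function fields intro} with the support hypothesis to force a modular relation $\Phi_N(A,B) = 0$ for some $N \in \mathbb{N}$, and then to upgrade this to $N = 1$, that is, $A = B$, via a surjectivity argument on the affine modular curve $\{\Phi_N = 0\}$.

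First, I would suppose for contradiction that $\Phi_N(A,B) \neq 0$ for every $N \in \mathbb{N}$. By Theorem \ref{thm: gcd in function fields intro} applied with $D_1 = D_2 = D$ (noting that $H_D(A), H_D(B) \neq 0$ since $A, B \notin F$), there is a fixed nonzero ideal $J \subseteq R$ such that $\gcd(H_D(A), H_D(B)) \mid J$ for every $D \in \mathbb{D}$. The support hypothesis for $|D| > D_0$ then shows that every prime ideal of $R$ dividing $H_D(A)$ also divides $\gcd(H_D(A), H_D(B))$, and hence divides $J$. Identifying maximal ideals of $R$ with closed points of $\mathcal{C}$, this means every $P \in \mathcal{C}$ with $A(P)$ a singular modulus of discriminant $|D| > D_0$ lies in the finite set $V(J) \subset \mathcal{C}$. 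However, since $A$ is non-constant and regular on $\mathcal{C}$, the induced map $A \colon \mathcal{C} \to \mathbb{A}^1_F$ omits only finitely many values of $F$; in particular, $A$ attains infinitely many singular moduli of discriminant $|D| > D_0$, producing infinitely many preimage points in $V(J)$, a contradiction. Hence $\Phi_N(A,B) = 0$ for some $N \geq 1$.

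Next, I would suppose toward contradiction that $A \neq B$ and take $N \geq 2$ minimal with $\Phi_N(A,B) = 0$. Since the modular curve $Y_0(N)$ is geometrically irreducible, the polynomial $\Phi_N$ is absolutely irreducible, so $\{\Phi_N = 0\} \subseteq \mathbb{A}^2_F$ is an irreducible one-dimensional affine curve. The image of $(A,B) \colon \mathcal{C} \to \mathbb{A}^2_F$ is an irreducible one-dimensional subvariety of $\{\Phi_N = 0\}$ (the dimension being one because $A$ is non-constant), hence Zariski dense; by Chevalley's theorem this image contains a dense open subset $U \subseteq \{\Phi_N = 0\}$ omitting only finitely many points. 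To reach a contradiction, for each $D \in \mathbb{D}$ coprime to $N$ with $|D| > D_0$, fix $E_D/F$ with $\mathrm{End}(E_D) = \mathcal{O}_D$. Among the $\psi(N) = N \prod_{p \mid N}(1+1/p)$ cyclic subgroups of $E_D[N]$ of order $N$, those stable under $\mathcal{O}_D$ correspond to ideals $I \subseteq \mathcal{O}_D$ with cyclic quotient $\mathcal{O}_D/I$ of order $N$, and a prime-by-prime analysis based on the splitting of $N$ in $\mathcal{O}_D$ bounds their count by $2^{\omega(N)} < \psi(N)$. Picking a non-stable cyclic $C_D \subset E_D[N]$ of order $N$, the endomorphism ring $\mathrm{End}(E_D/C_D)$ is a proper suborder of $\mathcal{O}_D$, so $j(E_D/C_D)$ is a singular modulus of some discriminant $D' \neq D$; meanwhile $(j(E_D), j(E_D/C_D)) \in \{\Phi_N = 0\}$ by the definition of $\Phi_N$. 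As $D$ varies, we obtain infinitely many such pairs with pairwise distinct first coordinates, at least one of which must lie in $U$; for a preimage $P \in \mathcal{C}$ we have $H_D(A)(P) = 0$ but $H_D(B)(P) \neq 0$, contradicting the support hypothesis.

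The main obstacle is the CM counting input in the last step: for every $N \geq 2$ and every $D$ coprime to $N$, one must exhibit a cyclic subgroup of $E_D[N]$ of order $N$ that is not $\mathcal{O}_D$-stable and whose quotient has strictly different CM discriminant. The inequality $2^{\omega(N)} < \psi(N)$ (which holds for all $N \geq 2$) together with a standard analysis of how the primes dividing $N$ decompose in $\mathcal{O}_D$ supplies this input.
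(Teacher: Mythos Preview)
Your first step, reducing to $\Phi_N(A,B) = 0$ for some $N \geq 1$ via Theorem~\ref{thm: gcd in function fields intro} and the cofiniteness of the image of $A$, coincides with the paper's argument. Your second step is correct but genuinely different. The paper fixes a prime $p \mid N$, chooses a single fundamental discriminant $D$ with $|D| > D_0$, $\gcd(D,N)=1$, and $p$ inert in $\mathbb{Q}(\sqrt{D})$, picks $P \in \mathcal{C}(F)$ with $A(P)$ of discriminant $D$, uses the support hypothesis to force $B(P)$ to have the same discriminant, and then invokes Proposition~\ref{lem: no horizontal} (no cyclic $N$-isogeny between curves with CM by the same order when some prime of $N$ is inert) for the contradiction. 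You instead argue geometrically: the image of $(A,B)$ is cofinite in the irreducible curve $\{\Phi_N=0\}$, and you manufacture infinitely many points on that curve whose coordinates have \emph{different} CM discriminants, one of which must lie in the image and violate the support hypothesis. Your CM input is only the counting bound --- at most $2^{\omega(N)} < \psi(N)$ cyclic order-$N$ subgroups are $\mathcal{O}_D$-stable when $\gcd(N,D)=1$ --- together with the fact that a non-stable kernel yields a quotient whose endomorphism ring does not contain $\mathcal{O}_D$; this already gives $D' \neq D$, so your stronger claim that the quotient ring is a \emph{proper suborder} (which is true, since $\gcd(N,f)=1$ rules out ascending steps in the isogeny volcano, but which you do not justify) is not actually needed. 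The paper's route packages the obstruction into a clean standalone lemma and requires only one well-chosen $D$; yours trades that lemma for a density-plus-counting argument and avoids Proposition~\ref{lem: no horizontal} entirely.
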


In the proof of Theorem \ref{thm: support problem in function fields intro}, we rely on Theorem \ref{thm: gcd in function fields intro}, but additional arguments are necessary to deal with the case where $\Phi_N(A,B) = 0$ for some $N \in \mathbb{N}$, $N > 1$. We then turn to the modular support problem in the number field case and prove the following theorem.

\begin{thm}[Theorem \ref{thm:modular_support_problem}]\label{thm:modular_support_problem intro}
Let $K$ be a number field and let $S$ be a finite set of maximal ideals of $\mathcal{O}_K$. Let $j, j' \in \mathcal{O}_{K,S}$. Suppose that there exists $D_0 \in \mathbb{N}$ such that all the prime ideals of $\mathcal{O}_{K,S}$ dividing $H_D(j)$ also divide $H_D(j')$ for every $D \in \mathbb{D}$ with $|D| > D_0$. Then either $j = j'$ or there exists $\widetilde{D} \in \mathbb{D}$ such that $H_{\widetilde{D}}(j)=H_{\widetilde{D}}(j')=0$.
\end{thm}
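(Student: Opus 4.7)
I would first reduce to a clean setting. If there exists $D \in \mathbb{D}$ with $|D| > D_0$ and $H_D(j) = 0$, then every maximal ideal of $\mathcal{O}_{K,S}$ trivially divides $H_D(j)$, so the support hypothesis forces $H_D(j') = 0$; taking $\widetilde{D} = D$ is enough. Hence I assume $H_D(j) \neq 0$ for every such $D$, which in particular means that if $j$ is a singular modulus, its discriminant $D_j$ satisfies $|D_j| \leq D_0$. Under this assumption the goal is to prove $j = j'$, and the plan is to show $j \equiv j' \pmod{\mathfrak{p}}$ for infinitely many maximal ideals $\mathfrak{p}$ of $\mathcal{O}_{K,S}$; since a non-zero element of $\mathcal{O}_{K,S}$ lies in only finitely many maximal ideals, this would force $j - j' = 0$.

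\textbf{Reduction modulo $\mathfrak{p}$.} Fix $\mathfrak{p} \notin S$ above a rational prime $p$ and let $\overline{j}, \overline{j'} \in \overline{\mathbb{F}}_p$ be the reductions. By \cref{prop:Hilbert_class_polynomial_mod_p} combined with Deuring's lifting theorem, if the elliptic curve over $\overline{\mathbb{F}}_p$ with $j$-invariant $\overline{j}$ is ordinary with endomorphism order $\mathcal{O}_{D^*_\mathfrak{p}}$ of conductor $f_\mathfrak{p}$ in the imaginary quadratic field $K_\mathfrak{p}$ of Frobenius, then the discriminants $D \in \mathbb{D}$ with $p \nmid D$ at which $\overline{j}$ is a root of $H_D \bmod \mathfrak{p}$ are precisely those of the form $D = f^2 d_{K_\mathfrak{p}}$ with $f_\mathfrak{p} \mid f$ and $p \nmid f$. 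The support hypothesis then forces $\overline{j'}$ to be a root of $H_D \bmod \mathfrak{p}$ for every such $D$ with $|D| > D_0$. Applying the same proposition in reverse, at each such $\mathfrak{p}$ either $\overline{j'}$ is supersingular, or $\overline{j'}$ is ordinary with endomorphism order in the same field $K_\mathfrak{p}$ and of conductor dividing $f_\mathfrak{p}$.

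\textbf{Main obstacle.} The hard step is to upgrade this CM-structure compatibility to the equality $\overline{j} = \overline{j'}$ at infinitely many primes. Two issues have to be handled. First, there can be up to $h(\mathcal{O}_{D^*_\mathfrak{p}})$ Galois-conjugate singular moduli sharing the same endomorphism order, so the compatibility by itself does not single out one reduction; I would track how Frobenius $\pi_\mathfrak{p}$ acts on the corresponding ideal classes and vary $\mathfrak{p}$ so that the combined constraints collapse the alternatives. Second, I must prevent $\overline{j'}$ from being supersingular at too many primes: for non-CM $E_j$, Serre's open-image theorem makes the supersingular primes of $E_j$ a density-zero set, and the CM-compatibility derived above should transfer this bound to $E_{j'}$; if instead $j$ is a singular modulus of small discriminant, then the field $K_\mathfrak{p} = \mathbb{Q}(\sqrt{D_j})$ is pinned down and a similar density argument applies. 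I expect the Frobenius-matching step, pinning $\overline{j'}$ down inside the Galois orbit by combining information at several $\mathfrak{p}$, to be the principal technical hurdle, with the secondary risk that the supersingular alternative cannot be eliminated by density considerations alone and requires a more delicate analysis of $H_D$ modulo $\mathfrak{p}$ via \cref{prop:Hilbert_class_polynomial_mod_p}.
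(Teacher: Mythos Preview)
Your proposal has two genuine gaps.

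\textbf{The ``main obstacle'' is not a plan.} You correctly observe that, at an ordinary prime $\mathfrak{p}$ for $E_j$, the support hypothesis forces $\overline{j'}$ to be a root of $H_{D^*_\mathfrak{p}} \bmod \mathfrak{p}$; equivalently, $\overline{j}$ and $\overline{j'}$ are geometrically isogenous mod $\mathfrak{p}$. But your proposed upgrade to $\overline{j} = \overline{j'}$ --- ``track how Frobenius acts on ideal classes and vary $\mathfrak{p}$'' --- is not a strategy, and I do not see how to make it one: at each $\mathfrak{p}$ the ambiguity is an element of a class group that changes with $\mathfrak{p}$, and there is no evident global object tying these together. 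The paper does \emph{not} try to show $\overline{j} = \overline{j'}$ at many primes. Instead, from ``$\overline{j}$ and $\overline{j'}$ are isogenous mod $\mathfrak{p}$ for almost all $\mathfrak{p}$'' it invokes the theorem of Khare--Larsen to obtain a \emph{global} isogeny $\varphi\colon (E_j)_{\overline{K}} \to (E_{j'})_{\overline{K}}$ of some cyclic degree $d$. If $d > 1$ and neither curve has CM, it then applies Zarhin's theorem (\cref{thm:Zarhin}) to produce ordinary primes $\mathfrak{p}$ at which every prime factor of $d$ is inert in, and coprime to the conductor of, the reduced endomorphism ring; reducing $\varphi$ mod such a $\mathfrak{p}$ contradicts \cref{lem: no horizontal}. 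Neither Khare--Larsen nor Zarhin appears in your outline, and they are doing the real work.

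\textbf{The CM case aims for too much.} After your initial reduction you still allow $j$ to be a singular modulus of discriminant $D_j$ with $|D_j| \leq D_0$, and you then try to prove $j = j'$. But the theorem only asserts $H_{\widetilde{D}}(j) = H_{\widetilde{D}}(j') = 0$ for some $\widetilde{D}$, and the paper remarks explicitly that it is \emph{open} whether $j = j'$ can always be achieved (indeed, \cref{thm:the lonely theorem} exhibits distinct conjugate singular moduli satisfying the support property for a positive proportion of $D$). The paper's proof in the CM case is much more modest: using \cref{prop:Hilbert_class_polynomial_mod_p} at two different split primes, it shows only that $D_{j'} = D_j$, hence both are roots of $H_{D_j}$. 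Your plan to force $j \equiv j' \pmod{\mathfrak{p}}$ at infinitely many $\mathfrak{p}$ in this case would, if it worked, resolve that open question --- which is a strong sign the plan will not work as stated.
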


An important ingredient in our proof of Theorem \ref{thm:modular_support_problem intro} is a result of Zarhin \cite{Zarhin_2017} that allows us to find many primes of good ordinary reduction for a given elliptic curve without complex multiplication such that the endomorphism ring of its reduction satisfies some suitable local conditions. We also crucially use a result by Khare and Larsen \cite{KhareLarsen_Preprint} that implies that two elliptic curves are geometrically isogenous if their reductions modulo $\mathfrak{p}$ are geometrically isogenous for all $\mathfrak{p}$ in a set of prime ideals of density $1$.

We do not know whether the conclusion of Theorem \ref{thm:modular_support_problem intro} can be strengthened to saying that $j = j'$ always. However, this strengthened conclusion is certainly false if we assume that $\mathfrak{p} \mid H_D(j) \Rightarrow \mathfrak{p} \mid H_D(j')$ holds just for infinitely many $D$ (and all $\mathfrak{p}$) instead of holding for all but finitely many $D$ (and all $\mathfrak{p}$). For instance, if $j$ and $j'$ are the two zeroes of $H_{-15}(T)$, we show in Theorem \ref{thm:the lonely theorem} that $\mathfrak{p} \mid H_D(j)$ if and only if $\mathfrak{p} \mid H_D(j')$ for all prime ideals $\mathfrak{p}$ in the ring of integers of $\mathbb{Q}(j) = \mathbb{Q}(j')$ and for all $D \in \mathbb{D}$ such that $D \equiv 1~\mathrm{mod}~8$.

\section{Preliminaries and Notation}

Throughout the paper, we adopt the following notation: if $K$ is a number field, we denote its ring of integers by $\mathcal{O}_K$ and for every finite set $S$ of maximal ideals of $\mathcal{O}_K$, we denote by $\mathcal{O}_{K,S}$ the ring of $S$-integers in $\mathcal{O}_K$. The norm $N(I)$ of a non-zero ideal $I \subseteq \mathcal{O}_{K,S}$ is the index $[\mathcal{O}_{K,S}:I]$. We have $N(IJ) = N(I)N(J)$ for all ideals $I,J \subseteq \mathcal{O}_{K,S}$. Finally, if $R$ is an arbitrary Dedekind domain and $a,b \in R$, we write $\gcd(a,b)$ for their greatest common divisor, \emph{i.e.}, for the ideal $Ra+Rb \subseteq R$.

\subsection{Elliptic curves with complex multiplication}

If $E$ is an elliptic curve over a field $K$, we denote its $j$-invariant by $j(E)$. If $L/K$ is a field extension, we denote the endomorphism ring of the base change $E_L$ of $E$ to $L$ by $\End_L(E)$. We say that $E$ has \emph{complex multiplication} if the canonical inclusion $\mathbb{Z} \hookrightarrow \End_{\overline{K}}(E)$ is strict for some algebraic closure $\overline{K}$ of $K$. We say that it has \emph{complex multiplication by a ring $\mathcal{O} \not\simeq \mathbb{Z}$} if $\End_{\overline{K}}(E) \simeq \mathcal{O}$.

Over a field of characteristic $0$, an elliptic curve with complex multiplication always has complex multiplication by an imaginary quadratic order $\mathcal{O}$. The $j$-invariants of the elliptic curves with complex multiplication by the imaginary quadratic order $\mathcal{O}$ of discriminant $D = \mathrm{disc}(\mathcal{O})$ are precisely the zeroes of the corresponding Hilbert class polynomial $H_D(T)$ ($D \in \mathbb{D}$). Recall that the Hilbert class polynomials $H_D(T)$ ($D \in \mathbb{D}$) all belong to $\mathbb{Z}[T]$ and are irreducible in this ring. The degree of $H_D$ will be denoted by $h(D)$ and equals the class number of the imaginary quadratic order of discriminant $D$. The discriminant of a singular modulus, \emph{i.e.}, of a zero of some $H_D(T)$, is the discriminant $D$ of the corresponding imaginary quadratic order.

Over fields of positive characteristic, the geometric endomorphism ring of an elliptic curve with complex multiplication is isomorphic either to an order in an imaginary quadratic field or to a maximal order in a quaternion algebra. In the first case or if the elliptic curve does not have complex multiplication, we call the elliptic curve \emph{ordinary}. In the second case, we call it \emph{supersingular}. For an ordinary elliptic curve over a finite field, all the geometric endomorphisms are already defined over the base field as we will now show.

\begin{lem} \label{lem:endomorphisms_ordinary_elliptic_curve}
Let $k$ be a finite field with an algebraic closure $\overline{k}$ and let $E_{/k}$ be an ordinary elliptic curve. Then $\End_k(E)=\End_{\overline{k}}(E)$, where we identify an endomorphism of $E$ with its base change to $\overline{k}$.
\end{lem}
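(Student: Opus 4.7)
My plan is to identify $\End_k(E)$ with the centralizer of the $q$-power Frobenius $\pi \in \End_k(E)$ inside $\End_{\overline{k}}(E)$ (where $q = |k|$), and then use ordinarity to force this centralizer to be the whole ring. The inclusion $\End_k(E) \subseteq \End_{\overline{k}}(E)$ is automatic, so we need only prove the reverse inclusion.

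First I would set up the Galois action. Every $\phi \in \End_{\overline{k}}(E)$ is defined over some finite extension $\mathbb{F}_{q^n}/k$, so the Galois group $\mathrm{Gal}(\overline{k}/k)$ acts on $\End_{\overline{k}}(E)$ through a finite quotient generated by the $q$-power arithmetic Frobenius $\sigma_q$. A short coordinate computation (write $\phi$ as a pair of rational functions in Weierstrass coordinates and apply the $q$-power map, using the identity $r(x,y)^q = r^{\sigma_q}(x^q, y^q)$) yields the intertwining relation
\[
\pi \circ \phi \;=\; \phi^{\sigma_q} \circ \pi \qquad \text{in } \End_{\overline{k}}(E).
\]
Consequently $\phi$ is $k$-rational, \emph{i.e.}, $\phi^{\sigma_q} = \phi$, if and only if $\phi$ commutes with $\pi$, so $\End_k(E)$ is exactly the centralizer of $\pi$ in $\End_{\overline{k}}(E)$.

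Now I would invoke the ordinarity hypothesis. Since $E$ is ordinary, $\End_{\overline{k}}(E)$ is by definition isomorphic to an order in an imaginary quadratic field and is in particular commutative. Hence every element of $\End_{\overline{k}}(E)$ commutes with $\pi$, so the centralizer above is the full ring and we conclude $\End_k(E) = \End_{\overline{k}}(E)$. The only step of this plan that is not a direct unravelling of definitions is the intertwining identity displayed above; it is the standard compatibility of the relative $q$-Frobenius with base change, and I expect it to be the technical focus of the written-up proof, though the fact itself is entirely routine.
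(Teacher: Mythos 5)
Your proof is correct, but it takes a different route from the paper. You characterize $\End_k(E)$ as the centralizer of the Frobenius $\pi$ inside $\End_{\overline{k}}(E)$ via the intertwining relation $\pi \circ \phi = \phi^{\sigma_q} \circ \pi$ (cancelling $\pi$ on the right is legitimate since $\pi$ is an isogeny, hence surjective on $\overline{k}$-points), and then use that ordinarity makes $\End_{\overline{k}}(E)$ commutative, so the centralizer is everything. The paper instead argues more formally: ordinarity gives that $\mathrm{id}_E$ and $\pi$ are $\mathbb{Z}$-linearly independent, so $\End_k(E)$ has some finite index $N$ in the rank-two ring $\End_{\overline{k}}(E)$; then, with all geometric endomorphisms defined over a finite Galois extension $k'/k$ and $\End_k(E)$ identified with the $\mathrm{Gal}(k'/k)$-invariants (Galois descent, [G\"ortz--Wedhorn, Theorem 14.84]), the element $Nf$ is Galois-fixed for every $f$, and torsion-freeness of the integral domain $\End_{k'}(E)$ forces $f$ itself to be fixed. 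Your argument buys the sharper structural statement (rational endomorphisms are exactly the commutant of Frobenius, the classical Waterhouse-style picture) at the cost of the coordinate computation establishing the intertwining identity; the paper's argument avoids that computation entirely and needs only finite index plus absence of torsion. Two small glosses on your write-up: the equivalence ``$\phi$ is $k$-rational iff $\phi^{\sigma_q}=\phi$'' is exactly the Galois-descent step the paper makes explicit with its citation, so it deserves a reference rather than being folded into the definition; and under the paper's definition ``ordinary'' also allows $\End_{\overline{k}}(E)\simeq\mathbb{Z}$ (which in fact cannot happen over $\overline{\mathbb{F}}_p$), but since that ring is also commutative your argument is unaffected.
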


\begin{proof}
Since $E$ is ordinary, the identity automorphism $\mathrm{id}_E$ and the Frobenius endomorphism $\pi$ are $\mathbb{Z}$-linearly independent by \cite[Chapter 13, Propositions 6.1 and 6.2]{Husemoeller_book}. Hence, $\End_k(E)$ has finite index $N$ inside $\End_{\overline{k}}(E)$. There exists a finite Galois extension $k \subseteq k'$ with $k' \subseteq \overline{k}$ such that $\End_{\overline{k}}(E) = \End_{k'}(E)$, where we identify an endomorphism of $E_{k'}$ with its base change to $\overline{k}$. By \cite[Theorem 14.84]{GoertzWedhorn}, $\End_k(E)$ is precisely the subset of $\End_{k'}(E)$ fixed by $\mathrm{Gal}(k'/k)$. Since $Nf \in \End_{k}(E)$ for all $f \in \End_{k'}(E)$ and the latter is an integral domain, it follows that all elements of $\End_{k'}(E)$ are fixed by $\mathrm{Gal}(k'/k)$ and so $\End_{\overline{k}}(E) = \End_{k'}(E) = \End_{k}(E)$ as desired.
\end{proof}

\subsection{Reduction of elliptic curves}\label{subsec:reduction_of_elliptic_curves}

When we say that an elliptic curve $E$ over a number field $K$ has \emph{(potential) good/bad reduction at a maximal ideal $\mathfrak{p}$} of $\mathcal{O}_{K,S}$, we mean that its base change to the completion $K_{\mathfrak{p}}$ of $K$ at $\mathfrak{p}$ has (potential) good/bad reduction in the sense of \cite[VII, Section 5]{Silverman_book_2009}. The elliptic curve $E$ has \emph{(potential) good ordinary/supersingular reduction at $\mathfrak{p}$} if its reduction at $\mathfrak{p}$ is (potentially) good and ordinary/supersingular. If $E$ does not have complex multiplication, Serre has proved that for $\mathfrak{p}$ in a set of maximal ideals of natural density $1$, the reduction of $E$ modulo $\mathfrak{p}$ is ordinary. This follows from \cite[Th\'eor\`eme 20 on p. 189 and Remarque 2 on p. 190]{Serre_1981} combined with the facts that the reduction of $E$ at a maximal ideal of prime norm $>3$ is supersingular if and only if the trace of the Frobenius endomorphism of the reduced elliptic curve is $0$ (see \cite[V, Exercise 5.10]{Silverman_book_2009}) and that the set of maximal ideals of prime norm has natural density $1$.

However, understanding the geometric endomorphism ring of the reductions of $E$ modulo $\mathfrak{p}$ for varying $\mathfrak{p}$ is a difficult problem. We will use the following theorem of Zarhin, which allows us to find infinitely many maximal ideals $\mathfrak{p} \subseteq \mathcal{O}_{K,S}$ at which $E$ has good ordinary reduction and such that the endomorphism ring of the reduction of $E$ at each such $\mathfrak{p}$ satisfies some prescribed local conditions.

\begin{thm}[Zarhin] \label{thm:Zarhin}
Let $L$ be an imaginary quadratic field and let $\mathcal{O} \subseteq L$ be an order. Let $K$ be a number field with ring of $S$-integers $\mathcal{O}_{K,S}$ for some fixed set of maximal ideals $S$ and consider an elliptic curve $E_{/K}$ without complex multiplication. Fix a non-empty finite set $\mathcal{P}$ of rational primes and set $\mathcal{O}_\ell:=\mathcal{O} \otimes_{\mathbb{Z}} \mathbb{Z}_\ell$ for all $\ell \in \mathcal{P}$. Define $\mathcal{A}$ to be the set of maximal ideals $\mathfrak{p} \subseteq \mathcal{O}_{K,S}$ such that
\begin{enumerate}
    \item the characteristic of the residue field $k_\mathfrak{p}$ at $\mathfrak{p}$ does not belong to $\mathcal{P}$,
    \item the curve $E$ has good ordinary reduction $E_\mathfrak{p}$ at $\mathfrak{p}$, and
    \item we have $\End_{k_\mathfrak{p}}(E_\mathfrak{p}) \otimes_\mathbb{Z} \mathbb{Z}_\ell \simeq \mathcal{O}_\ell$ for all $\ell \in \mathcal{P}$.
\end{enumerate}
Then $\mathcal{A}$ has positive density in the set of prime ideals of $\mathcal{O}_{K,S}$.
\end{thm}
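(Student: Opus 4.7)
The plan is to reformulate conditions (2) and (3) as conditions on Frobenius elements in suitable mod-$\ell^{n_\ell}$ Galois representations attached to $E$, and then apply Serre's open image theorem together with Chebotarev's density theorem to obtain positive density.

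For the reformulation, fix $\mathfrak{p}$ of good reduction with residue characteristic $p \notin \mathcal{P}$. For $\ell \in \mathcal{P}$ (so $\ell \neq p$), Tate's theorem on endomorphisms of elliptic curves over finite fields identifies $\End_{k_\mathfrak{p}}(E_\mathfrak{p}) \otimes \mathbb{Z}_\ell$ (which, by \Cref{lem:endomorphisms_ordinary_elliptic_curve}, equals the geometric endomorphism ring when $E_\mathfrak{p}$ is ordinary) with the centralizer of $\rho_{E,\ell}(\mathrm{Frob}_\mathfrak{p})$ in $M_2(\mathbb{Z}_\ell)$, where $\rho_{E,\ell}\colon G_K \to \mathrm{GL}_2(\mathbb{Z}_\ell)$ is the natural action on the $\ell$-adic Tate module. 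The isomorphism class of this centralizer as a $\mathbb{Z}_\ell$-algebra is locally constant in $\rho_{E,\ell}(\mathrm{Frob}_\mathfrak{p})$, so condition (3) at $\ell$ amounts to requiring that the reduction of $\rho_{E,\ell}(\mathrm{Frob}_\mathfrak{p})$ modulo $\ell^{n_\ell}$ lie in some non-empty union $V_\ell$ of conjugacy classes in $\mathrm{GL}_2(\mathbb{Z}/\ell^{n_\ell})$, for $n_\ell$ large enough. Concretely, $V_\ell$ contains the class of the companion matrix of $T^2 - sT + n$, where $\mathcal{O}_\ell \simeq \mathbb{Z}_\ell[T]/(T^2 - sT + n)$: this companion matrix has centralizer exactly $\mathcal{O}_\ell$.

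By Serre's open image theorem for non-CM elliptic curves over number fields, combined with his independence-of-$\ell$ results, the joint representation $\rho := \prod_{\ell \in \mathcal{P}}(\rho_{E,\ell} \bmod \ell^{n_\ell})\colon G_K \to \prod_{\ell \in \mathcal{P}} \mathrm{GL}_2(\mathbb{Z}/\ell^{n_\ell})$ has finite-index image, and indeed this image is ``as large as possible'' subject only to the compatibility of determinants with the cyclotomic character. Assuming one can exhibit a Frobenius element whose image under $\rho$ lies in $\prod_{\ell \in \mathcal{P}} V_\ell$, Chebotarev then produces a positive density of prime ideals $\mathfrak{p}$ with this property. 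Intersecting with the density-$1$ set of primes of good ordinary reduction (Subsection \ref{subsec:reduction_of_elliptic_curves}), and discarding the finitely many primes over $\mathcal{P}$, we obtain that $\mathcal{A}$ has positive density.

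The main obstacle is the non-emptiness of $\rho(G_K) \cap \prod_{\ell \in \mathcal{P}} V_\ell$. This requires the full strength of Serre's results: not merely that each individual image $\rho_{E,\ell}(G_K)$ is open, but that the joint image is constrained only by a single explicit global obstruction, namely the cyclotomic character on determinants. Matching the companion-matrix candidates across different $\ell$ then reduces to a weak approximation problem: the constant terms of the companion matrices (one per $\ell \in \mathcal{P}$) must simultaneously be congruent to a common rational prime $p$ modulo $\ell^{n_\ell}$, which is enforced by constraining $p$ to a suitable arithmetic progression modulo $\prod_{\ell \in \mathcal{P}} \ell^{n_\ell}$ via Chebotarev in the cyclotomic tower.
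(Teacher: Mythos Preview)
The paper does not prove this theorem from scratch: it simply records that the case $S = \emptyset$ is a special case of \cite[Theorem~1.3]{Zarhin_2017} (via \cite[Example~1.5]{Zarhin_2017}, using that the ordinary primes have density~$1$), and that passing to arbitrary finite $S$ discards only finitely many primes. Your proposal, by contrast, sketches a direct proof---essentially a blueprint for Zarhin's own argument.

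Your overall strategy (Tate's theorem to convert condition~(3) into a condition on Frobenius conjugacy classes, then Serre's open image theorem together with Chebotarev) is the correct framework. Two of your intermediate claims, however, are not right as stated. First, the isomorphism type of the centralizer of a matrix in $M_2(\mathbb{Z}_\ell)$ is \emph{not} locally constant in general: near a scalar matrix it drops from rank~$4$ to rank~$2$ under arbitrarily small perturbations. What is true, and sufficient for your purposes, is that the companion matrix you describe has an open $\mathrm{GL}_2(\mathbb{Z}_\ell)$-invariant neighbourhood consisting of matrices whose centralizers are isomorphic to $\mathcal{O}_\ell$; so condition~(3) is \emph{implied by}, not \emph{equivalent to}, landing in some $V_\ell$. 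Second, and more seriously, the claim that the joint image of $\rho$ is ``as large as possible subject only to the compatibility of determinants with the cyclotomic character'' is false for a general non-CM curve: Serre only gives that the adelic image is \emph{open}, i.e., of finite index, and for individual primes $\ell \in \mathcal{P}$ there can be further constraints (a rational $\ell$-isogeny, a rational $\ell$-torsion point, etc.). Showing that your candidate companion matrices can nonetheless be realized inside this a~priori smaller image---simultaneously for all $\ell \in \mathcal{P}$ and with compatible determinants---is exactly the substantive content of Zarhin's paper, and your weak-approximation sketch in the last paragraph does not address it.
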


\begin{proof}
If $S=\emptyset$, this is a special case of \cite[Theorem 1.3]{Zarhin_2017}, see \cite[Example 1.5]{Zarhin_2017} where the fact is needed that the set of ordinary primes for $E$ has density $1$ as remarked above. The theorem with $S\neq \emptyset$ follows easily from this case.
\end{proof}

If $E$ has complex multiplication, then the behaviour of the geometric endomorphism rings of its reductions modulo various maximal ideals $\mathfrak{p}$ of good reduction with residue characteristics $p$ is well-understood thanks to the work of Deuring \cite{Deuring}. It is connected to the value of the Kronecker symbol $\left(\frac{\cdot}{p}\right)$ at the discriminant of the geometric endomorphism algebra of $E$, see  \cite[Chapter 13, Theorem 12]{Lang_1987}. The following useful proposition tells us how the reduction behaviour of a Galois orbit of singular moduli at a fixed maximal ideal $\mathfrak{P}$ in the ring of integers of $\overline{\mathbb{Q}}$ changes if we replace their discriminant by its product with a power of the residue characteristic of $\mathfrak{P}$. This is certainly well-known to the expert, but we have not managed to find an appropriate reference in the literature, so we provide a proof here.

\begin{prop} \label{prop:Hilbert_class_polynomial_mod_p}
For every discriminant $D \in \mathbb{D}$, every prime $p \in \mathbb{N}$, and every $n \in \mathbb{N}$ we have
\[
H_{Dp^{2n}}(X)\equiv H_D(X)^{k} \mbox{ \emph{mod} } p, 
\]
where, if $\mathcal{O}$ denotes the order of discriminant $D$, we have
\[
k = \frac{h\left(Dp^{2n}\right)}{h(D)} = \frac{2p^{n-1}}{|\mathcal{O}^\ast|} \left(p-\left(\frac{D}{p}\right) \right).
\]
\end{prop}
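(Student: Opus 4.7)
The plan is to verify the asserted congruence by matching degrees and root multiplicities modulo $p$ on the two sides. Matching degrees amounts to the class number identity $h(Dp^{2n}) = k\cdot h(D)$. Writing $D = f_0^2 d_K$ with $d_K$ the discriminant of the maximal order $\mathcal{O}_K$ of $\mathrm{Frac}(\mathcal{O})$, I apply the classical conductor formula
\[h(\mathcal{O}_f) \,=\, \frac{h(\mathcal{O}_K)\,f\,|\mathcal{O}_f^\ast|}{|\mathcal{O}_K^\ast|}\prod_{\ell\mid f}\!\left(1-\left(\tfrac{d_K}{\ell}\right)\tfrac{1}{\ell}\right)\]
at $f = f_0$ and $f = f_0 p^n$. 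Taking the ratio and distinguishing the cases $p\mid f_0$ (in which $(D/p)=0$ and no new Euler factor appears) from $p\nmid f_0$ (in which $(D/p) = (d_K/p)$ and exactly one new factor $1-(d_K/p)/p$ enters), combined with $|\mathcal{O}_{f_0 p^n}^\ast|=2$, recovers the displayed formula for $k$.

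For the congruence itself, both sides are monic polynomials in $\mathbb{Z}[X]$ of equal degree, so it suffices to match multiplicities of roots in $\overline{\mathbb{F}}_p$. Fix a prime $\mathfrak{P}$ of $\overline{\mathbb{Q}}$ above $p$ and let $\mathrm{red}$ denote reduction modulo $\mathfrak{P}$. The surjection of proper ideal class groups $\mathrm{Cl}(\mathcal{O}')\twoheadrightarrow\mathrm{Cl}(\mathcal{O})$, $[\mathfrak{a}']\mapsto[\mathcal{O}\mathfrak{a}']$, has kernel of order $k$; via the bijection between ideal classes and CM $j$-invariants this induces a $k$-to-$1$ surjection $\psi$ from the singular moduli of discriminant $Dp^{2n}$ onto those of discriminant $D$, and each pair $(j',\psi(j'))$ is realized by a canonical degree-$p^n$ isogeny $\phi:E'\to E$ of CM elliptic curves (from the lattice inclusion $\mathfrak{a}'\subseteq\mathcal{O}\mathfrak{a}'$). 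The central geometric input is that, after reducing modulo $\mathfrak{P}$, this isogeny is purely inseparable and thus identifies $\overline{E}$ with $\overline{E}'{}^{(p^n)}$, giving
\[\mathrm{red}(\psi(j')) \,=\, \mathrm{red}(j')^{p^n}.\]
In the supersingular case this is automatic, as no separable $p$-power isogeny between supersingular curves exists over $\overline{\mathbb{F}}_p$. In the ordinary case, Deuring's reduction theorem identifies $\mathrm{End}(\overline{E}')$ with $\mathrm{End}(\overline{E})$, both equal to the order of $\mathcal{O}_K$-conductor given by the prime-to-$p$ part of the original conductor, and a direct computation of the $\mathcal{O}_K$-annihilator of $\mathcal{O}/\mathcal{O}'$ identifies the reduction of $\ker(\phi)$ with the connected subgroup scheme $\mu_{p^n}\subset\overline{E}'[p^n]$ associated to the prime of $\mathcal{O}_K$ lying below $\mathfrak{P}$, so that $\overline{\phi}$ becomes the $n$-fold relative Frobenius up to isomorphism.

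Combining these ingredients, for every $\overline{\jmath}\in\overline{\mathbb{F}}_p$ the multiplicity of $\overline{\jmath}$ in $\overline{H_{Dp^{2n}}}$ counts singular moduli of discriminant $Dp^{2n}$ reducing to $\overline{\jmath}$, which by the $k$-to-$1$ nature of $\psi$ combined with the displayed reduction identity equals $k$ times the number of singular moduli of discriminant $D$ reducing to $\overline{\jmath}^{p^n}$. Since $\overline{H_D}\in\mathbb{F}_p[X]$, Frobenius permutes its roots preserving multiplicities, so this last quantity equals $k$ times the multiplicity of $\overline{\jmath}$ itself in $\overline{H_D}$, yielding the desired polynomial identity. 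The main obstacle is the geometric identity $\mathrm{red}(\psi(j')) = \mathrm{red}(j')^{p^n}$ in the ordinary case: the kernel of the canonical isogeny $\phi$ must reduce to the connected part of $\overline{E}'[p^n]$ with no étale component, which is best justified via Serre-Tate theory of canonical lifts (the $k$ distinct $\mathcal{O}'$-CM lifts of a fixed ordinary $\overline{E}'$ with $\mathrm{End}(\overline{E}')\supsetneq\mathcal{O}'$ are necessarily non-canonical and must be linked to the canonical $\mathcal{O}$-CM lift by cyclic $p^n$-isogenies whose kernels reduce to $\mu_{p^n}$).
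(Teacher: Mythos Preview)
Your overall architecture is sound and genuinely different from the paper's. The paper reduces to $n=1$, uses Kronecker's congruence $\Phi_p(X,Y)\equiv (X^p-Y)(X-Y^p)\pmod p$ to show that the reductions of $\mathcal{J}_D$ and $\mathcal{J}_{Dp^2}$ coincide as sets, and then invokes the total ramification of the ring class field extension $L_D\subseteq L_{Dp^2}$ at every prime above $p$ to pin down the multiplicities exactly. Your route through the canonical $p^n$-isogeny and its reduction is more geometric and bypasses ring class field theory entirely; the counting step at the end is correct.

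The gap is precisely where you flag it: the identity $\mathrm{red}(\psi(j'))=\mathrm{red}(j')^{p^n}$ in the ordinary case is not proved. The phrase ``a direct computation of the $\mathcal{O}_K$-annihilator of $\mathcal{O}/\mathcal{O}'$'' does not make sense when $\mathcal{O}\neq\mathcal{O}_K$, since $\mathcal{O}/\mathcal{O}'$ then carries no $\mathcal{O}_K$-action. The Serre--Tate sketch is too vague to be a proof, and the parenthetical assertion that a fixed ordinary $\overline{E}'$ has exactly $k$ distinct $\mathcal{O}'$-CM lifts is essentially the multiplicity statement you are trying to establish, so invoking it here is circular.

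Here is a clean way to close the gap that stays within your framework. Factor $\phi$ through the chain of orders $\mathcal{O}'=\mathbb{Z}+p^n\mathcal{O}\subset\mathbb{Z}+p^{n-1}\mathcal{O}\subset\cdots\subset\mathcal{O}$, reducing to the case $n=1$. Writing $\mathcal{O}=\mathbb{Z}[\omega]$ and $\mathcal{O}'=\mathbb{Z}[p\omega]$, one computes $\mathcal{O}'/p\mathcal{O}'\cong\mathbb{F}_p[\epsilon]/(\epsilon^2)$ with $\epsilon=p\omega$, a local ring with a \emph{unique} ideal of order $p$. Thus $E'[p]$ has a unique $\mathcal{O}'$-stable subgroup of order $p$. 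Over the ring of integers of a $p$-adic field, the connected part $E'[p]^\circ$ of the finite flat group scheme $E'[p]$ has order $p$ (ordinary, height $1$) and is $\mathcal{O}'$-stable (endomorphisms preserve the identity component); hence $E'[p]^\circ$ coincides on generic fibres with $\ker\phi$, and $\overline{\phi}$ is purely inseparable as required. Note that for $n\geq 2$ the uniqueness fails (e.g.\ $(\mathbb{Z}/p^2)[\epsilon]/(\epsilon^2)$ has several $\epsilon$-stable cyclic subgroups of order $p^2$), so the reduction to $n=1$ is not merely cosmetic.
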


\begin{proof}
By induction, it suffices to prove the statement only for $n=1$. We begin by showing that $H_{D}(X)$ and $H_{Dp^{2}}(X)$ have the same set of roots in an algebraic closure of $\mathbb{F}_p$. In order to do so, fix a prime $\mathfrak{P} \subseteq \overline{\mathbb{Q}}$ lying above $p$, and let $\mathcal{J}_D, \mathcal{J}_{Dp^2} \subseteq \overline{\mathbb{Q}}$ be the sets of singular moduli of discriminant $D$ and $Dp^2$ respectively. We claim that $\mathcal{J}_{D} \text{ mod } \mathfrak{P} = \mathcal{J}_{Dp^2} \text{ mod } \mathfrak{P}$.

Let $j \in \mathcal{J}_D$ and let $E_{/\overline{\mathbb{Q}}}$ be an elliptic curve with $j(E)=j$. Since $\mathcal{J}_D$ is a Galois orbit over $\mathbb{Q}$, we can fix embeddings $\overline{\mathbb{Q}} \hookrightarrow \mathbb{C}$ and $\mathcal{O} \hookrightarrow \mathbb{C}$ such that $E_{\mathbb{C}}$ is complex-analytically isomorphic to $\mathbb{C}/\mathcal{O}$. Consider a complex elliptic curve $E'$ whose analytification is isomorphic to $\mathbb{C}/(\mathbb{Z}+p\mathcal{O})$. The inclusion $\mathbb{Z}+p\mathcal{O} \subset \mathcal{O}$ induces an isogeny of degree $p$ from $E'$ onto $E_{\mathbb{C}}$ and $E'$ has complex multiplication by an order of discriminant $Dp^2$. It follows that $j(E') \in \overline{\mathbb{Q}}$ and so both $E'$ as well as the isogeny can be defined over $\overline{\mathbb{Q}}$. We will denote the corresponding elliptic curve over $\overline{\mathbb{Q}}$ by $E'$ as well. Letting $j':=j(E')$, one then has $\Phi_p(j,j')=\Phi_p(j',j)=0$ by \cite[Proposition 14.11]{Cox_book_2013}, where we recall that $\Phi_p$ denotes the $p$-th modular polynomial. Reducing this equality modulo $\mathfrak{P}$ and using Kronecker's congruence relation \cite[Theorem 11.18~(v)]{Cox_book_2013} we obtain
\begin{equation} \label{eq:modular_polynomial_mod_p}
    \Phi_p(j,j') \equiv (j-j'^p)(j^p-j') \equiv 0 \text{ mod } \mathfrak{P}
\end{equation}
so that either $j \equiv j'^p$ or $j' \equiv j^p \text{ mod } \mathfrak{P}$. Let now $K \subseteq \overline{\mathbb{Q}}$ be the Galois closure of $\mathbb{Q}(j,j')$ and set $\mathfrak{p}:=\mathfrak{P}\cap K$ with residue field $k_\mathfrak{p}$. Since the natural map from the decomposition group $D(\mathfrak{p}/p)$ of $\mathfrak{p}$ over $p$ to $\mathrm{Gal}(k_\mathfrak{p}/\mathbb{F}_p)$ is surjective by \cite[Chapter I, Proposition 9.4]{Neukirch_book}, there exists an element $\sigma \in D(\mathfrak{p}/p) \subseteq \mathrm{Gal}(K/\mathbb{Q})$ that reduces to the Frobenius automorphism of $k_\mathfrak{p}/\mathbb{F}_p$. It then follows from \eqref{eq:modular_polynomial_mod_p} that either $j \equiv \sigma(j')$ or $j \equiv \sigma^{-1}(j') \text{ mod } \mathfrak{p}$. Since singular moduli of the same discriminant form a full Galois orbit over $\mathbb{Q}$, we conclude that $j \text{ mod } \mathfrak{P} \in \mathcal{J}_{Dp^2} \text{ mod } \mathfrak{P}$ and hence that $\mathcal{J}_{D} \text{ mod } \mathfrak{P} \subseteq \mathcal{J}_{Dp^2} \text{ mod } \mathfrak{P}$. The other inclusion can be shown to hold true by repeating the same argument with $j, D, \mathcal{O}$ interchanged with $j', Dp^2, \mathbb{Z}+p\mathcal{O}$. This proves the claim.

Let now $L_D$, $L_{Dp^2} \subseteq \overline{\mathbb{Q}}$ be the ring class fields associated to the orders of discriminant $D$ and $Dp^2$ respectively. Then the extension $L_D \subseteq L_{Dp^2}$ is Galois and $[L_{Dp^2}:L_D]=h(Dp^2)/h(D) =: k$. Recall that
\[ h(Dp^2)/h(D) = \frac{2}{|\mathcal{O}^{\ast}|} \left(p-\left(\frac{D}{p}\right) \right)\]
thanks to \cite[Corollary 7.28]{Cox_book_2013}. Moreover, by \cite[Proposition 2.3~(1)]{LiLiOuyang_Preprint}, every prime of $L_D$ lying above $p$ is totally ramified in $L_D \subseteq L_{Dp^2}$. Set $\mathfrak{q}:=\mathfrak{P}\cap L_D$ and $\mathfrak{Q}:=\mathfrak{P}\cap L_{Dp^2}$ the unique prime of $L_{Dp^2}$ lying above it. Fix moreover $j\in L_D$ to be any singular modulus of discriminant $D$. Then by the above claim, there exists a singular modulus $j' \in L_{Dp^2}$ of discriminant $Dp^2$ such that $j' \equiv j \text{ mod } \mathfrak{Q}$. Since $\mathfrak{q}$ is totally ramified in $L_D \subseteq L_{Dp^2}$, we also have $\sigma(j') \equiv j \text{ mod } \mathfrak{Q}$ for every $\sigma \in \mathrm{Gal}(L_{Dp^2}/L_D)$. Moreover, $j'$ is a primitive element for $L_{Dp^2}$ over $L_D$, so it has exactly $k$ Galois conjugates over $L_D$. Hence, this argument shows that for every singular modulus $j$ of discriminant $D$, there exist at least $k$ distinct singular moduli of discriminant $Dp^2$ reducing to $j$ modulo $\mathfrak{P}$. On the other hand, there are exactly $h(Dp^2)=kh(D)$ singular moduli of discriminant $Dp^2$ and $h(D)$ singular moduli of discriminant $D$. Since $H_{Dp^2}$ and $H_D$ are both monic, we conclude that $H_{Dp^2} (X)  \equiv H_D(X)^k \text{ mod } p$ as we wanted to show.
\end{proof}

\subsection{Degrees in function fields}\label{subsec:deg}

Let $F$ be an algebraically closed field of arbitrary characteristic and let $R$ be the coordinate ring of a smooth affine irreducible curve $\mathcal{C}$ over $F$. By \cite[Examples 15.2~(2)]{GoertzWedhorn}, $R$ is a Dedekind domain. It is an example of the kind of Dedekind domains with which we will work in this article. Therefore, the following technical machinery will be useful for us later.

Let $K$ denote the fraction field of $R$. Any $f \in K$ has a degree $\deg f \in \mathbb{N} \cup \{0\}$, defined by $\deg f = [K:F(f)]$ if the field extension $F(f) \subseteq K$ is finite (equivalently: algebraic) and $\deg f = 0$ otherwise. In the latter case, $f$ must belong to $F$ since $F$ is algebraically closed and the transcendence degree is additive in towers of field extensions. If $F(f) \subseteq K$ is finite, we can apply \cite[Proposition 15.31]{GoertzWedhorn} with $C_2 = \mathbb{P}^1_F$, $D \in \{[0],[\infty]\}$, $C_1$ equal to a smooth projective irreducible curve with an open immersion $\mathcal{C} \hookrightarrow C_1$ (cf. \cite[Corollary 6.32, Remark 15.15~(3), and Theorem 15.21]{GoertzWedhorn}), and $f$ equal to the finite morphism $C_1 \to C_2$ induced by $f$ (that we also denote by $f$). It follows that
\begin{equation}\label{eq:sumofvaluations}
    \deg f = -\sum_{x \in C_1(F)}{\min\{0,\mathrm{ord}_x(f)\}} = \sum_{x \in C_1(F)}{\max\{0,\mathrm{ord}_x(f)\}},
\end{equation}
where $\mathrm{ord}_x(f)$ denotes the order of vanishing of $f$ at $x$. Note that \eqref{eq:sumofvaluations} also holds if $f \in F$. We deduce that $\deg f$ coincides with $H(f)$ as defined in \cite[p.~8, equation (2), and p.~96, after equation (1)]{Mason}.

For future reference, we summarize some basic properties of the degree in the following proposition:

\begin{prop}\label{prop:prop of deg}
The degree map has the following properties:
\begin{enumerate}
    \item $\deg(fg) \leq \deg f+\deg g$ for all $f,g \in K$,
    \item $\deg(f^n) = |n|\deg f$ for all $f \in K^{\ast}$ and all $n \in \mathbb{Z}$, and
    \item $\deg f = 0$ if and only if $f \in F$.
\end{enumerate}
\end{prop}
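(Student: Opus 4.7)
The entire plan is to deduce each property directly from the two equivalent expressions in \eqref{eq:sumofvaluations}, together with the fact that each $\mathrm{ord}_x$ is a discrete valuation on $K^{\ast}$. I would dispatch (3) first, essentially by unwinding the definition: if $f \in F$, then $\deg f = 0$ by convention, and if $f \notin F$, then since $F$ is algebraically closed and $K$ has transcendence degree one over $F$, the element $f$ is transcendental over $F$, so $F(f) \subseteq K$ is finite, giving $\deg f = [K:F(f)] \geq 1$.

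For (1), the case where $f = 0$ or $g = 0$ is trivial (both sides vanish), so I may assume $f, g \in K^{\ast}$. For every closed point $x \in C_1(F)$ the valuation $\mathrm{ord}_x$ is additive, $\mathrm{ord}_x(fg) = \mathrm{ord}_x(f) + \mathrm{ord}_x(g)$, and from the elementary inequality $\max\{0, a+b\} \leq \max\{0,a\} + \max\{0,b\}$ for integers $a,b$ I would sum over $x \in C_1(F)$ using the second expression in \eqref{eq:sumofvaluations} to obtain the claim. Only finitely many terms in each sum are nonzero, so the manipulation is legitimate.

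For (2), using $\mathrm{ord}_x(f^n) = n\,\mathrm{ord}_x(f)$ one gets $\max\{0, n\,\mathrm{ord}_x(f)\} = n\max\{0, \mathrm{ord}_x(f)\}$ for $n \geq 0$, whence $\deg(f^n) = n \deg f$. For $n < 0$, I would exploit the symmetry between the two expressions in \eqref{eq:sumofvaluations}: the identity $\max\{0,-t\} = -\min\{0,t\}$ shows that $\deg(f^{-1}) = \deg f$, and then the non-negative case applied to $f^{-1}$ gives $\deg(f^n) = |n|\deg(f^{-1}) = |n|\deg f$. There is no serious obstacle here; the entire proposition is a direct verification from \eqref{eq:sumofvaluations} together with standard valuation-theoretic identities.
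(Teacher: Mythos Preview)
Your proof is correct and follows essentially the same approach as the paper: the paper's proof simply says that (1) and (2) are clear from \eqref{eq:sumofvaluations} and that (3) has already been established in the paragraph preceding the proposition. You have supplied exactly the details implicit in that one-line argument, using additivity of $\mathrm{ord}_x$ and the elementary inequality $\max\{0,a+b\}\leq \max\{0,a\}+\max\{0,b\}$ for (1), the homogeneity of $\mathrm{ord}_x$ together with the symmetry between the two expressions in \eqref{eq:sumofvaluations} for (2), and the transcendence-degree argument for (3).
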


\begin{proof}
Property (1) as well as Property (2) are clear from \eqref{eq:sumofvaluations}. Property (3) has already been established above.
\end{proof}

Since $R$ is a Dedekind domain, we may also define the degree $\deg I$ of a non-zero ideal $I$ of $R$ by stipulating that
\begin{enumerate}
    \item $\deg R = 0$,
    \item $\deg I = 1$ if $I$ is a maximal ideal, and
    \item $\deg(IJ) = \deg I + \deg J$ for all non-zero ideals $I$ and $J$.
\end{enumerate}
Note that $\deg f$ and $\deg(fR)$ are not equal in general, \emph{e.g.}, if $f \in R^{\ast}\backslash F^{\ast}$.

\section{G.C.D.'s and Hilbert class polynomials}\label{sec:functionfield}

In this section, we study the ``size'' of greatest common divisors of the form $\gcd (H_{D}(a), H_{D}(b))$ for varying $D\in \mathbb{D}$ and fixed $a,b$ belonging to several Dedekind domains of interest. As explained in the introduction, this problem is a modular analogue of the more classical question concerning the size of $\gcd (a^n-1, b^n-1)$ for $n \in \mathbb{N}$. In this setting, one usually assumes $a$ and $b$ to be multiplicatively independent since otherwise the greatest common divisors involved become trivially large. Equivalently, one assumes that over the fraction field of the Dedekind domain under consideration, the point $(a,b)$ is not contained in any proper special subvariety of $\mathbb{G}^2_m$. Using the dictionary between the multiplicative and the modular world, one sees that this condition translates into taking $a$ and $b$ as the $j$-invariants of elliptic curves without complex multiplication that are not geometrically isogenous to each other.

We begin with the function field case in characteristic $0$, \textit{i.e.}, with the case where $a$ and $b$ are assumed to be elements of the coordinate ring $R$ of a smooth irreducible affine curve defined over an algebraically closed field $F$ of characteristic $0$.

\begin{thm} \label{thm: gcd in function fields}
Let $F$ be an algebraically closed field of characteristic $0$, let $R$ be the coordinate ring of a smooth affine irreducible curve $\mathcal{C}_{/F}$, and let $A,B \in R$. If $\Phi_N(A,B) \neq 0$ for all $N \in \mathbb{N}$, then there exists a non-zero ideal $J \subseteq R$ such that
\[ \gcd(H_{D_1}(A),H_{D_2}(B)) \mid J \quad \mbox{ or } \quad H_{D_1}(A)H_{D_2}(B) = 0  \]
for all $D_1, D_2 \in \mathbb{D}$.
\end{thm}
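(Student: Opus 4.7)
The strategy is to combine Andr\'e's theorem on special points of algebraic curves in $Y(1)^2$ with a local analysis at the closed points of $\mathcal{C}$, very much in the spirit of Ailon--Rudnick's use of Ihara--Serre--Tate in the multiplicative case. First, I dispose of the cases in which $A$ or $B$ lies in $F$. If, say, $A = \alpha \in F$ is not a singular modulus, then $H_{D_1}(A) = H_{D_1}(\alpha) \in F^{\ast} \subseteq R^{\ast}$ for every $D_1$, so the gcd is the unit ideal and any nonzero $J$ (say $J = R$) works. If $\alpha$ is a singular modulus of discriminant $D_0$, the same applies for $D_1 \neq D_0$, while for $D_1 = D_0$ one has $H_{D_1}(A) = 0$ and falls into the second alternative of the theorem. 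The cases where $B \in F$, or where both $A$ and $B$ lie in $F$, are symmetric. So I assume from now on that $A, B \in R \setminus F$; in particular $H_D(A)$ and $H_D(B)$ are nonzero elements of $R$ for every $D \in \mathbb{D}$, since otherwise $A$ or $B$ would be algebraic over $F$ and hence lie in $F$.

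Consider the morphism $\varphi = (A, B) : \mathcal{C} \to Y(1)^2 = \mathbb{A}^2_F$ and let $C \subseteq Y(1)^2$ be the Zariski closure of its image, which is an irreducible curve. Non-constancy of $A$ and $B$ prevents $C$ from being horizontal or vertical (that is, of the form $\{s\} \times Y(1)$ or $Y(1) \times \{s\}$ for a singular modulus $s$), while the hypothesis $\Phi_N(A, B) \neq 0$ for every $N \in \mathbb{N}$ prevents $C$ from being the modular correspondence $\{\Phi_N = 0\}$ (which is irreducible). These exhaust the one-dimensional special subvarieties of $Y(1)^2$, so $C$ is not special. By Andr\'e's theorem, $C$ contains only finitely many special points. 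Andr\'e's result is formulated over $\mathbb{C}$, but since $\mathcal{C}, A, B$ descend to a finitely generated subfield of $F$ that embeds into $\mathbb{C}$, and since being special is an absolute notion cut out by the integer polynomials $H_D$, the conclusion transfers. Let $\Sigma \subseteq C(F)$ be this finite set of special points and set $\Sigma' := \varphi^{-1}(\Sigma) \subseteq \mathcal{C}(F)$. Since $A$ and $B$ are non-constant, each fiber of $\varphi$ is finite, so $\Sigma'$ is finite as well.

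It remains to build $J$. Identify $\Sigma'$ with a finite set of maximal ideals $\{\mathfrak{p}_P\}_{P \in \Sigma'}$ of $R$ via the usual bijection between closed points of $\mathcal{C}$ and maximal ideals of $R$. For each $P \in \Sigma'$, let $D_i(P) \in \mathbb{D}$ be the discriminant of the singular modulus $A(P)$ (if $i = 1$) or $B(P)$ (if $i = 2$), and set
\[
e_P := \ord_P\bigl(H_{D_1(P)}(A)\bigr),
\]
which is a finite nonnegative integer because $H_{D_1(P)}(A)$ is a nonzero element of $R$. If a maximal ideal $\mathfrak{p} \subseteq R$ divides $\gcd(H_{D_1}(A), H_{D_2}(B))$ with positive multiplicity for some $D_1, D_2 \in \mathbb{D}$, then the corresponding $P \in \mathcal{C}(F)$ satisfies $H_{D_1}(A(P)) = 0 = H_{D_2}(B(P))$; this forces $(A(P), B(P))$ to be a special point of $Y(1)^2$, so $P \in \Sigma'$, $D_1 = D_1(P)$, $D_2 = D_2(P)$, and the order of $\mathfrak{p}_P$ in the gcd is bounded above by $e_P$. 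Hence $J := \prod_{P \in \Sigma'} \mathfrak{p}_P^{e_P}$, with the convention $J = R$ if $\Sigma' = \emptyset$, is a nonzero ideal of $R$ with the required divisibility.

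I expect the main obstacle to be the correct invocation of Andr\'e's theorem in this generality: carefully enumerating the one-dimensional special subvarieties of $Y(1)^2$ and verifying that each is excluded under the two hypotheses of the theorem, and checking that the $\mathbb{C}$-version of the finiteness statement transfers to an arbitrary algebraically closed base field of characteristic $0$. Once that geometric input is in place, the local bookkeeping at the finitely many special points of $\mathcal{C}$ is essentially formal.
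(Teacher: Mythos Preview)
Your proposal is correct and follows essentially the same route as the paper: reduce to $A,B\notin F$, apply Andr\'e's theorem to the image curve of $(A,B):\mathcal{C}\to Y(1)^2$ to get finitely many special points, pull back to finitely many closed points of $\mathcal{C}$, and bound the local multiplicities there. Your multiplicity bound via $e_P=\ord_P(H_{D_1(P)}(A))$ is equivalent to the paper's $e(\mathfrak{m})=\sup_\sigma \ord_{\mathfrak{m}}(A-\sigma)$, since only the factor $A-A(P)$ in $H_{D_1(P)}(A)$ is divisible by $\mathfrak{p}_P$; and your explicit check that the image curve is neither a fiber nor a modular correspondence, together with the descent of Andr\'e's theorem from $\mathbb{C}$ to $F$, matches what the paper invokes more tersely.
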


Note that the second alternative in Theorem \ref{thm: gcd in function fields} can only occur if either $A$ or $B$ is constant and equal to a singular modulus. Theorem \ref{thm: gcd in function fields} is the analogue of the first part of \cite[Theorem 1]{AilonRudnick} by Ailon and Rudnick (with $F = \mathbb{C}$ and $R = \mathbb{C}[X]$) that one obtains by substituting $H_{D_1}(T)$, $H_{D_2}(T)$ for $T^k-1$ and ``not $j$-invariants of geometrically isogenous elliptic curves" for ``multiplicatively independent". Our proof goes along the lines of the proof in \cite{AilonRudnick}, but we apply Andr\'e's theorem for $Y(1)^2$ instead of Ihara--Serre--Tate's theorem for $\mathbb{G}^2_m$.

\begin{proof}
Suppose first that $A \in F$. Then for all $D \in \mathbb{D}$, either $H_D(A) = 0$ or $H_D(A) \in R^{\ast}$ and so the theorem holds with $J = R$. The same argument works if $B \in F$.

From now on, we assume that $A \not\in F$ and $B \not\in F$. Set $J_{D_1,D_2} = \gcd(H_{D_1}(A),H_{D_2}(B))$ for $D_1,D_2 \in \mathbb{D}$ and suppose that some maximal ideal $\mathfrak{m}$ of $R$ divides $J_{D_1,D_2}$. We want to show that $\mathfrak{m}$ has to belong to a finite set that is independent of $D_1,D_2$.

The tuple $(A,B)$ defines a morphism $\varphi: \mathcal{C} \to Y(1)_F^2 \simeq \mathbb{A}^2_F$. Let $\mathcal{C}'$ denote the Zariski closure of the image of $\varphi$. Since $A$ is non-constant by assumption, $\mathcal{C}'$ is a curve and $\varphi$ has finite fibers by \cite[Theorems 5.22~(3) and 10.19 and Proposition 15.16~(1)]{GoertzWedhorn}. Now, the maximal ideal $\mathfrak{m}$ corresponds to a point $Q_{\mathfrak{m}} \in \mathcal{C}(F)$. Since $\mathfrak{m}$ divides $J_{D_1,D_2}$, we deduce that $P_{\mathfrak{m}} := \varphi(Q_{\mathfrak{m}}) \in \mathcal{C}'(F)$ is a special point. It follows from our assumptions and from Andr\'e's theorem \cite{Andre} that the number of special points lying on $\mathcal{C}'$ is finite (the proof over $\mathbb{C}$ yields a proof over $F$ by standard arguments). So $P_{\mathfrak{m}}$ indeed belongs to a finite set that is independent of $D_1,D_2$. Since $\varphi$ has finite fibers, the same holds for $Q_{\mathfrak{m}}$. Because the correspondence between $\mathfrak{m}$ and $Q_{\mathfrak{m}}$ is a bijection, the ideal $\mathfrak{m}$ lies in a finite set that does not depend on $D_1,D_2$.

It remains to show that the order with which a given maximal ideal $\mathfrak{m}$ divides $J_{D_1,D_2}$ is bounded independently of $D_1,D_2$. Set $e(\mathfrak{m})$ equal to the supremum of the orders with which $\mathfrak{m}$ divides $A-\sigma \neq 0$, where $\sigma$ runs over the set of all singular moduli. We have $e(\mathfrak{m}) < \infty$ since at most one $A-\sigma$ can be divisible by $\mathfrak{m}$. But $H_{D_1}(A)$ factors as a product of pairwise coprime elements $A-\sigma$, where $\sigma$ runs over the singular moduli of discriminant $D_1$. So the order to which $\mathfrak{m}$ divides $J_{D_1,D_2}$ is bounded by the order to which it divides $H_{D_1}(A)$, which is in turn bounded by $e(\mathfrak{m})$. The theorem now follows.

\end{proof}

\begin{rmk}
If we do not assume that $\Phi_N(A,B)\neq 0$ for all $N\in \mathbb{N}$, the theorem becomes false. More precisely, if $\Phi_N(A,B)= 0$ for some positive integer $N$, then for every $D\in \mathbb{D}$ and for every maximal ideal $\mathfrak{m}$ dividing $H_D(A)$, there exists $D' \in \mathbb{D}$ such that $\mathfrak{m} \mid H_{D'}(B)$. Indeed, with the notation from the proof of Theorem \ref{thm: gcd in function fields}, if $Q_\mathfrak{m}$ is the point of $\mathcal{C}(F)$ corresponding to $\mathfrak{m}$, then $A(Q_\mathfrak{m})$ is a singular modulus of discriminant $D$. Since by specialization $\Phi_N(A(Q_\mathfrak{m}),B(Q_\mathfrak{m}))= 0$, we deduce that any two elliptic curves over $F$ with $j$-invariants $A(Q_\mathfrak{m})$ and $B(Q_\mathfrak{m})$ are isogenous. It follows that also $B(Q_\mathfrak{m})$ is a singular modulus of some discriminant $D' \in \mathbb{D}$. This precisely means that $\mathfrak{m} \mid H_{D'}(B)$. Note finally that, if $A \not \in F$, the set of maximal ideals dividing some $H_D(A)$ is infinite. Indeed, the image of the morphism $\varphi_A: \mathcal{C} \to \mathbb{A}^1_F$ defined by $A \in R \backslash F$ is then open and dense, hence cofinite in $\mathbb{A}^1_F$ by \cite[Proposition 15.4~(1)]{GoertzWedhorn}. In particular, all but finitely many singular moduli are in the image of $\varphi_A$ and the result easily follows.
\end{rmk}

\begin{cor}\label{cor: gcd in function fields}
Let $F$ be an algebraically closed field of characteristic $0$, let $R$ be the coordinate ring of a smooth affine irreducible curve $\mathcal{C}_{/F}$, and let $A,B \in R\backslash F$. If $\Phi_N(A,B) \neq 0$ for all $N \in \mathbb{N}$, then for all but finitely many $(D_1,D_2) \in \mathbb{D}^2$ the elements $H_{D_1}(A)$ and $H_{D_2}(B)$ are coprime.
\end{cor}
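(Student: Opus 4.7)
The plan is to reduce the corollary to Theorem~\ref{thm: gcd in function fields} together with a counting argument. First I would rule out the second alternative in Theorem~\ref{thm: gcd in function fields}: if $H_{D_1}(A) = 0$ in $R$, then evaluating at any point $Q \in \mathcal{C}(F)$ shows that $A(Q)$ is a singular modulus of discriminant $D_1$; since $\mathcal{C}$ is irreducible and there are only finitely many such moduli (which lie in $F$ because $F$ is algebraically closed of characteristic~$0$ and hence contains $\overline{\mathbb{Q}}$), the morphism $A \colon \mathcal{C} \to \mathbb{A}^1_F$ would be constant, contradicting $A \in R \backslash F$. The same applies to $B$. Consequently, for every $(D_1,D_2) \in \mathbb{D}^2$ the product $H_{D_1}(A)H_{D_2}(B)$ is non-zero, so Theorem~\ref{thm: gcd in function fields} provides a fixed non-zero ideal $J \subseteq R$ with $\gcd(H_{D_1}(A),H_{D_2}(B)) \mid J$ for all $(D_1,D_2)$.

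Next I would argue that $J$ is divisible by only finitely many maximal ideals of $R$. Call these $\mathfrak{m}_1,\dots,\mathfrak{m}_r$, and let $Q_i \in \mathcal{C}(F)$ be the point corresponding to $\mathfrak{m}_i$ (using the Nullstellensatz identification $R/\mathfrak{m}_i \simeq F$). If $(D_1,D_2)$ is such that $\gcd(H_{D_1}(A),H_{D_2}(B)) \neq R$, then some $\mathfrak{m}_i$ must divide both $H_{D_1}(A)$ and $H_{D_2}(B)$, and reducing modulo $\mathfrak{m}_i$ gives $H_{D_1}(A(Q_i)) = 0$ and $H_{D_2}(B(Q_i)) = 0$ in $F$. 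Thus $A(Q_i)$ is a singular modulus of discriminant $D_1$, and $B(Q_i)$ is a singular modulus of discriminant $D_2$.

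Finally, I would observe that each singular modulus determines its discriminant uniquely (since $H_{D_1}$ and $H_{D_1'}$ have no common root for $D_1 \neq D_1'$, as they are coprime irreducible polynomials over $\mathbb{Q}$). Hence, for each of the finitely many indices $i$, the pair $(A(Q_i),B(Q_i))$ pins down at most one pair $(D_1,D_2)$ for which the $\gcd$ can fail to be $R$. Summing over $i$ shows that there are only finitely many pairs $(D_1,D_2) \in \mathbb{D}^2$ with $\gcd(H_{D_1}(A),H_{D_2}(B)) \neq R$, which is the claim. There is essentially no obstacle here: the hard content was already absorbed into Theorem~\ref{thm: gcd in function fields} via Andr\'e's theorem, and the remainder is a clean bookkeeping argument using the fact that $A$ and $B$ are non-constant and that singular moduli have unique discriminants.
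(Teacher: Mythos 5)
Your proposal is correct and follows essentially the same route as the paper: invoke Theorem \ref{thm: gcd in function fields} (the second alternative being excluded since $A,B \notin F$), note that only finitely many maximal ideals can divide the resulting fixed ideal $J$, and use that each point $Q_{\mathfrak{m}}$ forces $A(Q_{\mathfrak{m}})$ and $B(Q_{\mathfrak{m}})$ to be singular moduli whose discriminants are uniquely determined. The paper phrases this as a contradiction with one maximal ideal serving infinitely many pairs, whereas you count directly (at most one bad pair per maximal ideal dividing $J$), but the substance is identical.
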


\begin{proof}
Suppose by contradiction that $\gcd(H_{D_1}(A), H_{D_2}(B)) \neq R$ for infinitely many pairs of discriminants $(D_1,D_2) \in \mathbb{D}^2$. Then by Theorem \ref{thm: gcd in function fields}, there exists some maximal ideal $\mathfrak{m}$ of $R$ dividing $\gcd(H_{D_1}(A), H_{D_2}(B))$ for infinitely many $(D_1,D_2) \in \mathbb{D}^2$.

Let $Q_{\mathfrak{m}} \in \mathcal{C}(F)$ denote the point corresponding to $\mathfrak{m}$. If $\varphi: \mathcal{C} \to Y(1)_F^2$ denotes the morphism induced by $(A,B)$, then the coordinates of $\varphi(Q_{\mathfrak{m}}) \in Y(1)^2(F) \simeq F^2$ are singular moduli of discriminants $D_1,D_2$ respectively for infinitely many $(D_1,D_2) \in \mathbb{D}^2$. This is a contradiction and the corollary follows.
\end{proof}

In the case $F = \mathbb{C}$, $\mathcal{C} = \mathbb{A}^1_F$, and $R = F[X]$, it is easy to find polynomials $A,B \in R$ for which $H_{D_1}(A)$ and $H_{D_2}(B)$ are coprime for every choice of $(D_1,D_2) \in \mathbb{D}^2$. Indeed, it suffices to choose $A$ and $B$ in such a way that the specializations $A(\tau)$ and $B(\tau)$ at complex numbers $\tau \in \mathbb{C}$ are never both singular moduli. As an example, one could take $A=X$ and $B=X+a$, for $a \in \mathbb{C}$ not an algebraic integer. One can even find $A,B \in \mathbb{Z}[X]$ with this property: for instance, the polynomials $A=X$ and $B=X+1$ satisfy $\gcd(H_{D_1}(A), H_{D_2}(B))=1$ for all $(D_1,D_2) \in \mathbb{D}^2$. This follows from the fact that differences of singular moduli are never units in the ring of algebraic integers, see \cite[Corollary 1.3]{Li_2018}.

We now turn to the function field case in positive characteristic. If $F$ is an algebraic closure of a finite field, then the statement of Theorem \ref{thm: gcd in function fields} is false as the following theorem shows.

\begin{thm} \label{thm: gcd singular moduli positive characteristic}
Let $p \in \mathbb{N}$ be prime and fix an algebraic closure $F = \overline{\mathbb{F}}_p$ of $\mathbb{F}_p$. Let $R$ be the coordinate ring of a smooth affine irreducible curve $\mathcal{C}_{/F}$ and let $A, B \in R\backslash F$. Then
\[
\limsup_{D \in \mathbb{D},\hspace{2pt}|D| \to \infty} \frac{\deg \left(\gcd(H_D(A), H_D(B)) \right)}{\deg H_D} > 0.
\]
\end{thm}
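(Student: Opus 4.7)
My plan has two stages: a bootstrap via Proposition \ref{prop:Hilbert_class_polynomial_mod_p} that reduces the problem to the existence of one ``good'' discriminant, and a Mason-based existence argument producing such a discriminant.

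\emph{Bootstrap.} In characteristic $p$, Proposition \ref{prop:Hilbert_class_polynomial_mod_p} becomes an equality in $F[X]$: $H_{Dp^{2n}}(X) = H_D(X)^{k_n}$ with $k_n := h(Dp^{2n})/h(D)$. Evaluating at $A,B\in R$ and using that in any Dedekind domain $\gcd(x^k,y^k)=\gcd(x,y)^k$ (compare $\mathfrak{p}$-adic valuations at every maximal ideal), the ratio $\deg\gcd(H_D(A),H_D(B))/\deg H_D$ is invariant under $D\mapsto Dp^{2n}$. Since $|Dp^{2n}|\to\infty$ as $n\to\infty$, the limsup in the theorem is at least this ratio for any single $D\in\mathbb{D}$. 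So it suffices to find one $D_0\in\mathbb{D}$ with $\gcd(H_{D_0}(A),H_{D_0}(B))\neq R$, equivalently a point $Q_0\in\mathcal{C}(F)$ and $D_0$ such that $A(Q_0)$ and $B(Q_0)$ are both roots of $H_{D_0}$ in $F$.

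\emph{Frobenius reformulation.} Since $H_{D_0}$ has integer coefficients, its reduction modulo $p$ lies in $\mathbb{F}_p[X]$, so its zero set in $F=\overline{\mathbb{F}}_p$ is stable under the absolute Frobenius $\alpha\mapsto\alpha^p$ and its inverse. It therefore suffices to find $Q_0$ and an integer $\ell\geq 0$ with $A(Q_0)=B(Q_0)^{p^\ell}$: then $A(Q_0)$ and $B(Q_0)$ are Frobenius-conjugate, and any $D_0$ for which $\alpha:=B(Q_0)$ is a root of $H_{D_0}$ automatically has $A(Q_0)$ as a root too. Such a $D_0$ always exists, because every $\alpha\in F$ is the $j$-invariant of an elliptic curve over $F$: one obtains $D_0$ via Deuring's lifting theorem (if the elliptic curve is ordinary) or by taking $D_0$ large with $\left(\frac{D_0}{p}\right)=-1$ (if supersingular, all roots of $H_{D_0}$ in $F$ are then supersingular and exhaust the finite set of supersingular $j$-invariants). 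Finding $(Q_0,\ell)$ thus reduces to finding $\ell\geq 0$ such that $A-B^{p^\ell}\in R$ is not a unit of $R$; any zero in $\mathcal{C}(F)$ provides the desired $Q_0$, and the degenerate case $A=B^{p^\ell}$ is trivially handled.

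\emph{Mason's $abc$ step.} Assume for contradiction that $A-B^{p^\ell}\in R^*$ for every $\ell\geq 0$. To secure Mason's separability hypothesis, first replace $A$ by its maximal $p$-th power root $A_0\in R$ (with $A=A_0^{p^m}$ and $A_0\notin K^p$, where $K:=\mathrm{Frac}(R)$); this preserves $A_0\in R\setminus F$ and reduces the problem to finding $\ell$ with $A_0-B^{p^\ell}$ not a unit. Applying Mason's $abc$ theorem in positive characteristic to
\[
A_0 + (-B^{p^\ell}) + (-(A_0-B^{p^\ell})) = 0
\]
in $K^*$, the degrees of the three terms are bounded by the number of distinct zeros and poles of their product on $\bar{\mathcal{C}}$, plus a constant depending only on the genus of $\bar{\mathcal{C}}$. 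But these distinct zeros and poles are contained in the fixed finite set $A_0^{-1}(0)\cup B^{-1}(0)\cup (\bar{\mathcal{C}}\setminus\mathcal{C})$ (note $B^{p^\ell}$ has the same \emph{distinct} zeros as $B$, and $A_0-B^{p^\ell}$ has no zeros on $\mathcal{C}$ by assumption), of size $N$ independent of $\ell$. Hence Mason bounds $p^\ell\deg B=\deg(B^{p^\ell})$ by a constant, contradicting $p^\ell\deg B\to\infty$ (using $\deg B\geq 1$ because $B\in R\setminus F$). The main obstacle is this Mason step, both in securing the separability hypothesis via the $p$-th power root reduction and in correctly setting up the three-term $abc$-equation so that the Frobenius-driven blow-up of $\deg(B^{p^\ell})$ forces the desired contradiction.
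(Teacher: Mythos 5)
Your proposal is correct and follows essentially the same route as the paper: the paper's proof likewise combines a Mason $abc$ argument in characteristic $p$ (Proposition \ref{prop:non-trivial gcd in positive characteristic}, after extracting the maximal $p$-th power root $A_0$ and bounding the relevant $S$-unit data independently of the Frobenius exponent) with Deuring lifting to produce one discriminant $D_0$ where the gcd is non-trivial, and then bootstraps along $D_0p^{2k}$ using Proposition \ref{prop:Hilbert_class_polynomial_mod_p}. The only cosmetic differences are that you phrase the Mason step as a direct contradiction (all $A_0-B^{p^\ell}$ units) rather than extracting a point for each exponent, and that Mason strictly bounds the height of the ratio $A_0/(-B^{p^\ell})$, which is at least $p^\ell\deg B-\deg A_0$, rather than $\deg(B^{p^\ell})$ itself — this changes nothing in the conclusion.
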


Theorem \ref{thm: gcd singular moduli positive characteristic} for $R = F[X]$ can be considered a slightly weaker modular analogue of \cite[Theorem 4]{Silverman_2004b}. The hypothesis that $F = \overline{\mathbb{F}}_p$ rather than an arbitrary field of characteristic $p$ is essential as the following example shows.

\begin{example}
Suppose that $F$ is an algebraic closure of $\mathbb{F}_p(S)$, where $S$ is an independent variable, $\mathcal{C} = \mathbb{A}^1_F$, and $R = F[X]$. Then $H_{D_1}(X-S)$ and $H_{D_2}(X-S^2)$ are coprime for all $D_1, D_2 \in \mathbb{D}$.
\end{example}

In order to prove Theorem \ref{thm: gcd singular moduli positive characteristic}, we will make use of the following preliminary result.

\begin{prop}\label{prop:non-trivial gcd in positive characteristic}
Let $p \in \mathbb{N}$ be prime and let $F$, $\mathcal{C}$, $R$, $A$, and $B$ be as in Theorem \ref{thm: gcd singular moduli positive characteristic}. Let $\Lambda \subseteq \mathbb{F}_p[T]$ be any set of polynomials such that $\{t \in F; P(t) = 0 \mbox{ for some } P \in \Lambda\} = F$. Then there exist infinitely many $\alpha \in \mathcal{C}(F)$ for which there is a polynomial $P \in \Lambda$ satisfying $P(A(\alpha)) = P(B(\alpha)) = 0$.
\end{prop}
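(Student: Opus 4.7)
The plan is to reduce the claim to the statement that the set
\[
Z := \bigcup_{k \geq 0} Z_k, \qquad Z_k := \{\alpha \in \mathcal{C}(F) : B(\alpha) = A(\alpha)^{p^k}\},
\]
is infinite. The reduction is immediate: any $P \in \Lambda \subseteq \mathbb{F}_p[T]$ commutes with the Frobenius, so $P(x^{p^k}) = P(x)^{p^k}$ for every $x \in F$ and every $k \geq 0$. Hence, if $\alpha \in Z_k$ and $P \in \Lambda$ is chosen with $P(A(\alpha)) = 0$ (which exists by the hypothesis on $\Lambda$), then $P(B(\alpha)) = P(A(\alpha))^{p^k} = 0$ as well, so such an $\alpha$ satisfies the required property.

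If $A^{p^k} - B = 0$ in $R$ for some $k \geq 0$, then $Z_k = \mathcal{C}(F)$ is already infinite. Otherwise $A^{p^k} - B \neq 0$ in $R$ for every $k \geq 0$, and we have the bound
\[
\deg(A^{p^k} - B) \geq p^k \deg A - \deg B,
\]
obtained by applying the triangle inequality $\deg(f+g) \leq \deg f + \deg g$ (an easy consequence of \eqref{eq:sumofvaluations} by bounding pole orders) to the decomposition $A^{p^k} = (A^{p^k}-B) + B$, together with Proposition \ref{prop:prop of deg}. Since $\deg A \geq 1$ by the assumption $A \in R\setminus F$, the right-hand side tends to infinity with $k$. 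Recall also that $\deg(A^{p^k}-B)$ equals the number of zeros of $A^{p^k}-B$ on $\mathcal{C}$ counted with multiplicity.

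Suppose now for contradiction that $Z = \{\alpha_1, \ldots, \alpha_m\}$ is finite, and for each $i$ set $a_i := A(\alpha_i)$, $s_i := \ord_{\alpha_i}(A - a_i) \geq 1$, and $r_i := \ord_{\alpha_i}(B - B(\alpha_i)) \geq 1$. The Frobenius expansion in characteristic $p$ gives
\[
A^{p^k} - B = (A - a_i)^{p^k} - (B - a_i^{p^k}).
\]
When $\alpha_i \in Z_k$ we have $B(\alpha_i) = a_i^{p^k}$, so $B - a_i^{p^k} = B - B(\alpha_i)$ has order exactly $r_i$ at $\alpha_i$—crucially independent of $k$—whereas $(A - a_i)^{p^k}$ has order $p^k s_i$ at $\alpha_i$. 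For $k$ large enough that $p^k s_i > r_i$ for every $i$, these two orders differ, so $\ord_{\alpha_i}(A^{p^k} - B) = r_i$ whenever $\alpha_i \in Z_k$. Since by assumption every zero of $A^{p^k}-B$ lies in $Z$, we get $\deg(A^{p^k} - B) \leq \sum_{i=1}^m r_i$ for all such $k$, contradicting the unbounded growth established above.

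The main obstacle—and the technical heart of the argument—is the uniform bound on the multiplicities $\ord_{\alpha_i}(A^{p^k}-B)$. The Frobenius expansion handles this by splitting $A^{p^k}-B$ into a term of order $p^k s_i$ and a term of order $r_i$ whose orders become incommensurable for large $k$, forcing the local valuation to equal the smaller, $k$-independent quantity $r_i$ while the global degree blows up like $p^k$.
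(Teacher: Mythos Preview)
Your argument is correct in substance and takes a genuinely different route from the paper's. The paper invokes Mason's $abc$ theorem in characteristic $p$ (applied to $A_0 - B^{p^n} - P_n = 0$, where $A_0$ is chosen so that $A = A_0^{p^m}$ with $A_0$ not a $p$-th power) to force $P_n$ to acquire zeros outside any fixed finite set once $n$ is large. You bypass this entirely with an elementary multiplicity computation: the Frobenius identity $A^{p^k}-B = (A-a_i)^{p^k} - (B-B(\alpha_i))$ lets you read off that the order of vanishing at each putative zero $\alpha_i$ stabilizes at $r_i$ for large $k$, while the global degree grows like $p^k$. This is cleaner and more self-contained; in particular you avoid the detour through $A_0$ that the paper needs to satisfy the ``not a $p$-th power'' hypothesis in Mason's lemma.

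There is one technical slip to patch. Your sentence ``$\deg(A^{p^k}-B)$ equals the number of zeros of $A^{p^k}-B$ on $\mathcal{C}$ counted with multiplicity'' is not quite right: by \eqref{eq:sumofvaluations}, $\deg f$ counts zeros on the smooth projective completion $C_1$, so you must also control possible zeros at the finitely many points of $C_1\setminus\mathcal{C}$. This is easy. At any such $x$, if $A$ or $B$ has a pole then so does $A^{p^k}-B$ for large $k$, contributing nothing; if both $A$ and $B$ are regular at $x$, your same expansion gives $\ord_x(A^{p^k}-B)\le \ord_x(B-B(x))$ for large $k$ whenever $x$ is a zero, again a bound independent of $k$. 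Adding this finite constant to $\sum_i r_i$ does not affect the contradiction with the unbounded lower bound $p^k\deg A - \deg B$.
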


\begin{proof}
The idea is to find infinitely many $\alpha \in \mathcal{C}(F)$ such that $B(\alpha)$ is the image of $A(\alpha)$ under some $k$-th power of the Frobenius automorphism of $F$, where $k = k(\alpha) \in \mathbb{N} \cup \{0\}$ is allowed to depend on $\alpha$. Since by assumption there exists $P \in \Lambda$ such that $P(B(\alpha))=0$ and the polynomial $P$ has integer coefficients, one then has $0=P(B(\alpha))=P(A(\alpha)^{p^k})=P(A(\alpha))^{p^k}$ for all these $\alpha$ and the result follows.

In order to find infinitely many such points $\alpha$, we will use Mason's function field version \cite[Lemma 10 on p. 97]{Mason} of the $abc$ conjecture in characteristic $p$, which has been proven by Mason. To this aim, we let $m \in \mathbb{N} \cup \{0\}$ denote the biggest non-negative integer such that there exists $A_0 \in R$ with $A = A_0^{p^m}$. Note that $m$ is well-defined thanks to Proposition \ref{prop:prop of deg}~$(2)$-$(3)$ and the fact that $A$ is non-constant by assumption.

For every $n \in \mathbb{N}$, set $P_n:=A_0 - B^{p^n} \in R$. We have $P_n \neq 0$ thanks to the maximality of $m$, and obviously
\begin{equation} \label{eq:S-unit eqaution positive char}
    A_0-B^{p^n}-P_n=0.
\end{equation}
We want to apply \cite[Lemma 10 on p. 97]{Mason}  to the above equality. First of all, note that $A_0/(-B^{p^n})$ is not a $p$-th power in the fraction field of $R$ since $A_0$ is not a $p$-th power in $R$ and $R$ is integrally closed by \cite[Corollary 6.32, Lemma 6.38~(1), Corollary 6.39, and Proposition B.70~(1)]{GoertzWedhorn}. Moreover, by Proposition \ref{prop:prop of deg} we also have
\begin{equation} \label{eq:degree_goes_to_infinity}
    \deg(A_0/(-B^{p^n})) \geq p^n\deg(B)-\deg(A_0)
\end{equation}
and the right-hand side goes to infinity as $n$ increases since $B \not\in F$ by hypothesis. Let now $\overline{\mathcal{C}}$ be the smooth projective closure of the curve $\mathcal{C}$ (see \cite[Corollary 6.32, Remark 15.15~(3), and Theorem 15.21]{GoertzWedhorn}) and let $S \subseteq \overline{\mathcal{C}}(F)$ be an arbitrary finite set of points containing $\overline{\mathcal{C}}(F) \backslash \mathcal{C}(F)$ as well as all the zeroes of $A_0$ and $B$. Inequality \eqref{eq:degree_goes_to_infinity} shows that if we choose $n$ large enough, then the degree of $A_0/(-B^{p^n})$ will be strictly bigger than $|S| +2g-2$, where $g=g(\overline{\mathcal{C}})$ is the genus of $\overline{\mathcal{C}}$. Hence, by \cite[Lemma 10 on p. 97]{Mason}, there exists $\alpha \in \mathcal{C}(F)\backslash S$ such that the orders of vanishing at $\alpha$ of the functions $A_0$, $B^{p^n}$, and $P_n$ are not all equal; note that the set $\overline{\mathcal{C}}(F)$ is in bijection with the set of valuations on the fraction field of $R$ constructed in \cite[Chapter VI]{Mason}. Since our choice of $S$ implies that $A_0$ and $B$ both do not vanish at $\alpha$, we deduce that $P_n(\alpha)=0$. This means precisely that $A(\alpha) = B(\alpha)^{p^{m+n}}$. Now, by repeatedly enlarging the set $S$, one can find infinitely many points $\alpha \in \mathcal{C}(F)$ with this property. We have reached the desired conclusion.
\end{proof}

We remark that, by Deuring's lifting theorem \cite[Chapter 13, Theorem 14]{Lang_1987} and the fact that every elliptic curve over $\overline{\mathbb{F}}_p$ has endomorphism ring larger than $\mathbb{Z}$, the set $\Lambda = \{H_D(T) \textrm{ mod } p; D \in \mathbb{D}\}$ satisfies the hypothesis of Proposition \ref{prop:non-trivial gcd in positive characteristic}.

\begin{proof}[Proof of Theorem \ref{thm: gcd singular moduli positive characteristic}]

Our goal is to find a sequence $\{D_k\}_{k \in \mathbb{N}}$ of discriminants $D_k \in \mathbb{D}$ such that $|D_k| \to \infty$ as $k \to \infty$ and
\begin{equation} \label{eq: sequence of discriminants going to infinity}
    \liminf_{k \to \infty} \frac{\deg \left(\gcd(H_{D_k}(A), H_{D_k}(B)) \right)}{\deg H_{D_k}} > 0.
\end{equation}

Proposition \ref{prop:non-trivial gcd in positive characteristic} applied to $\Lambda = \{H_D(T) \textrm{ mod } p; D \in \mathbb{D}\}$ implies that there exists some discriminant $D_0 \in \mathbb{D}$ and some $\alpha \in \mathcal{C}(F)$ such that $H_{D_0}(A(\alpha)) = H_{D_0}(B(\alpha)) = 0$. It follows from the definition of the degree of an ideal that
\[ 
\deg(\gcd(H_{D_0}(A),H_{D_0}(B))) \geq 1.
\]

For $k \in \mathbb{N}$, set $D_k = D_0p^{2k}$. It is clear that $|D_k| \to \infty$ as $k \to \infty$. We can now apply Proposition \ref{prop:Hilbert_class_polynomial_mod_p} to deduce that 
\[ \deg(\gcd(H_{D_k}(A),H_{D_k}(B))) \geq \frac{\deg H_{D_k}}{\deg H_{D_0}}\deg(\gcd(H_{D_0}(A),H_{D_0}(B))) \geq \frac{\deg H_{D_k}}{\deg H_{D_0}} > 0.\]

So \eqref{eq: sequence of discriminants going to infinity} holds and the theorem follows.
\end{proof}

We finally leave the cozy realm of function fields to enter the more hostile world of number fields. Influenced by the previous discussion, one may be tempted to believe that the modular analogues of the multiplicative results of Bugeaud--Corvaja--Zannier \cite{Bugeaud_Corvaja_Zannier_2003} and Corvaja--Zannier \cite{Corvaja_Zannier_2005} should also hold true in this setting. For instance, the aforementioned results inspire the following natural conjecture in the modular framework: for all ``well-chosen'' $a,b \in \mathbb{Z}$ and for every $\varepsilon>0$ we have
\[
\log \left(\gcd (H_D(a), H_D(b) ) \right) < \varepsilon \deg H_D
\]
provided that $D \in \mathbb{D}$ is sufficiently large in absolute value. Here ``well-chosen'' means that neither $a$ nor $b$ is a singular modulus and that furthermore $a$ and $b$ are not $j$-invariants of geometrically isogenous elliptic curves over $\mathbb{Q}$, conditions that, as we have already remarked above, correspond to multiplicative independence in the multiplicative setting. Perhaps surprisingly, this just stated conjecture turns out to be completely false in general. The reason for this is the possible existence of common supersingular primes for the elliptic curves having $a$ and $b$ as their $j$-invariants.

\begin{thm} \label{thm: BCZ_fails}
Let $K$ be a number field and let $S$ be a finite set of maximal ideals of $\mathcal{O}_K$. Consider two elliptic curves ${E_1}_{/K}$, ${E_2}_{/K}$ with potential good reduction outside of $S$, i.e., such that $j(E_i) \in \mathcal{O}_{K,S}$ for $i = 1,2$. Suppose that there exists a prime ideal $\mathfrak{p} \subseteq \mathcal{O}_{K,S}$ at which both $E_1$ and $E_2$ have potential good supersingular reduction. Let $p$ denote the rational prime lying under $\mathfrak{p}$. Then

\[ \limsup_{D \in \mathbb{D},\hspace{2pt}|D| \to \infty}{(\deg H_D)^{-1}\log N(\gcd(H_D(j(E_1)),H_D(j(E_2))))} \geq \frac{\log p}{p-1} > 0.\]
\end{thm}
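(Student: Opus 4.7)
The plan is to produce an infinite sequence $\{D_n\} \subset \mathbb{D}$ with $|D_n| \to \infty$ such that many singular moduli of discriminant $D_n$ reduce, modulo a fixed prime $\mathfrak{P}$ of $\overline{K}$ above $\mathfrak{p}$, to one of the supersingular $j$-invariants $\bar{\jmath}_i := j(E_i) \bmod \mathfrak{P}$. Combined with the fact that $\mathfrak{p}$ ramifies only very mildly in the relevant ring class fields, this will force $v_{\mathfrak{p}}(H_{D_n}(j(E_i)))$ to be large for both $i = 1, 2$, and hence $N(\gcd(\cdots))$ to be large.

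First, I would restrict to $D$ ranging over negative fundamental discriminants coprime to $p$ satisfying $(D/p) = -1$; for such $D$, Deuring's criterion ensures that every singular modulus of discriminant $D$ reduces to a supersingular $j$-invariant modulo $\mathfrak{P}$. Michel's equidistribution theorem \cite[Theorem 3]{Michel_2004}, applied at level $N=1$, then asserts that as $|D| \to \infty$ through such discriminants, the empirical reduction measures $\tfrac{1}{h(D)}\sum_\sigma \delta_{\sigma \bmod \mathfrak{P}}$ converge to the natural probability measure on the supersingular locus, which by Eichler's mass formula assigns to a supersingular $\bar{\jmath}$ the weight $\tfrac{24}{(p-1)\#\mathrm{Aut}(\bar{\jmath})}$. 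Since $\#\mathrm{Aut}(\bar{\jmath}) \leq 24$ in every characteristic, I obtain
\[
\lim_{|D| \to \infty} \frac{\#\{\sigma \text{ of discriminant } D : \sigma \equiv \bar{\jmath}_i \bmod \mathfrak{P}\}}{h(D)} \;=\; \frac{24}{(p-1)\#\mathrm{Aut}(\bar{\jmath}_i)} \;\geq\; \frac{1}{p-1}.
\]

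The second step is to lower bound $v_{\mathfrak{p}}(H_D(j(E_i)))$ by this count. Because $D$ is coprime to $p$, the prime $\mathfrak{p}$ is unramified in the compositum $K H_D / K$ with the ring class field of the order of discriminant $D$ (the case $p=2$ needs a very small extra argument). Extending the normalized valuation $v_\mathfrak{p}$ (equal to $1$ at a uniformizer of $\mathfrak{p}$) to a valuation $v$ on $\overline{K}$, the values of $v$ on $K H_D$ are non-negative integers, so that $v(j(E_i) - \sigma) \geq 1$ whenever $\sigma \equiv j(E_i) \bmod \mathfrak{P}$. Factoring $H_D(X) = \prod_\sigma (X - \sigma)$ gives
\[
v_\mathfrak{p}(H_D(j(E_i))) \;\geq\; \#\{\sigma \text{ of discriminant } D : \sigma \equiv \bar{\jmath}_i \bmod \mathfrak{P}\}.
\]
Taking $\min_{i \in \{1,2\}}$ and multiplying by $\log N(\mathfrak{p}) \geq \log p$, the $\limsup$ along our sequence of $D$'s yields the claimed bound $\tfrac{\log p}{p-1}$.

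The hard part is Michel's equidistribution theorem, which does all the heavy lifting; the remaining ingredients (Deuring's criterion, the unramifiedness of $\mathfrak{p}$ in $K H_D$, the elementary valuation bound) are routine. The conceptually interesting feature is that the $(D, p) = 1$ ramification control, coupled with the equidistribution, converts the mere existence of a common supersingular prime into the quantitative lower bound $\tfrac{\log p}{p - 1}$.
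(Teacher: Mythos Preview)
Your proposal is correct and follows essentially the same route as the paper: restrict to fundamental discriminants $D$ with $p$ inert in $\mathbb{Q}(\sqrt{D})$, invoke Michel's supersingular equidistribution to lower-bound the number of singular moduli of discriminant $D$ reducing to each $\bar{\jmath}_i$ by roughly $h(D)/(p-1)$, use that $\mathfrak{p}$ is unramified in the compositum of $K$ with the Hilbert class field to convert this count into a $\mathfrak{p}$-adic valuation bound on $H_D(j(E_i))$, and conclude. The paper packages the mass lower bound $\mu_p(\bar{\jmath}) \geq (p-1)^{-1}$ via a short case split ($p \leq 5$ versus $p > 5$) rather than your uniform $\#\mathrm{Aut} \leq 24$, and it phrases Michel's input as an effective lower bound rather than a limit, but these are cosmetic differences; no extra argument is actually needed for $p=2$ once you have taken $D$ fundamental with $(D/p)=-1$.
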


Here, the logarithm of the ideal norm $\log N$ plays the role of the degree $\deg$ of an ideal in the function field case. Both of them count with multiplicity the number of maximal ideals dividing a given non-zero element; $\deg$ weights each maximal ideal by $1$ while $\log N$ weights each maximal ideal by the logarithm of the cardinality of its residue field.

In order to prove Theorem \ref{thm: BCZ_fails}, we will crucially rely on an equidistribution result due to Michel \cite{Michel_2004} concerning supersingular reduction of CM elliptic curves. We briefly recall the content of this result and deduce some easy consequences.

Let $p \in \mathbb{N}$ be a rational prime and fix a prime $\mathfrak{p} \subseteq \overline{\mathbb{Q}}$ lying above it. Let $\overline{\mathbb{F}}_p$ denote the residue field of $\mathfrak{p}$, it is an algebraic closure of $\mathbb{F}_p$. Let $L$ be an imaginary quadratic field and assume that $p$ is inert in $L$. We denote by $\text{Ell}(\mathcal{O}_L)$ the set of $\overline{\mathbb{Q}}$-isomorphism classes of elliptic curves with complex multiplication by $\mathcal{O}_L$ and by $\text{Ell}_{\text{ss}}(\overline{\mathbb{F}}_p)$ the set of isomorphism classes of supersingular elliptic curves over $\overline{\mathbb{F}}_p$. Both sets are finite: indeed, the cardinality of $\text{Ell}(\mathcal{O}_L)$ equals the class number of $L$ while $\text{Ell}_{\text{ss}}(\overline{\mathbb{F}}_p)$ is finite by \cite[V, Theorem 3.1]{Silverman_book_2009}. Moreover, every class in $\text{Ell}(\mathcal{O}_L)$ can be represented by the base change $E_{\overline{\mathbb{Q}}}$ of $E$ to $\overline{\mathbb{Q}}$, where $E$ is an elliptic curve over a number field $L_0 \subseteq \overline{\mathbb{Q}}$ that has good reduction at $\mathfrak{p} \cap L_0$. We define $E_{\overline{\mathbb{Q}}} \text{ mod } \mathfrak{p}$ as the base change to $\overline{\mathbb{F}}_{p}$ of $E \text{ mod } (\mathfrak{p} \cap L_0)$. We then have a map
\begin{equation} \label{eq:supersingular_reduction_map}
    \Psi_{\mathfrak{p}, \mathcal{O}_L}: \text{Ell}(\mathcal{O}_L) \to \text{Ell}_{\text{ss}}(\overline{\mathbb{F}}_p), \hspace{0.5cm} [E_{\overline{\mathbb{Q}}}] \mapsto [E_{\overline{\mathbb{Q}}} \text{ mod } \mathfrak{p}]
\end{equation}
that is well-defined by \cite[II, Proposition 4.4]{Silverman_book_1994}.

\begin{thm}[Michel]\label{thm:supersingular_equidistribution}
Let $D$ denote the discriminant of $L$. There exist an absolute constant $\eta > 0$ as well as a constant $c = c(\mathfrak{p}) \in \mathbb{R}$ such that for each class $[\widetilde{E}] \in \text{\emph{Ell}}_{\text{\emph{ss}}}(\overline{\mathbb{F}}_p)$, we have
\[ |\{[E] \in \text{\emph{Ell}}(\mathcal{O}_L); \Psi_{\mathfrak{p}, \mathcal{O}_L}([E]) = [\widetilde{E}]\}| \geq ((p-1)^{-1} - cD^{-\eta})h(D).\]
\end{thm}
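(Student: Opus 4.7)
The plan is to derive this statement as a direct consequence of \cite[Theorem 3]{Michel_2004}.

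Michel's theorem provides an effective form of equidistribution: there exist an absolute constant $\eta>0$ and a constant $c=c(\mathfrak{p})$ such that, for every function $f\colon \text{Ell}_{\text{ss}}(\overline{\mathbb{F}}_p)\to\mathbb{C}$,
\[
\left| \frac{1}{h(D)} \sum_{[E]\in \text{Ell}(\mathcal{O}_L)} f\bigl(\Psi_{\mathfrak{p},\mathcal{O}_L}([E])\bigr) \;-\; \frac{24}{p-1}\sum_{[\widetilde{E}]\in \text{Ell}_{\text{ss}}(\overline{\mathbb{F}}_p)}\frac{f([\widetilde{E}])}{|\text{Aut}(\widetilde{E})|} \right| \;\leq\; c|D|^{-\eta}\|f\|_\infty,
\]
where the weights $1/|\text{Aut}(\widetilde{E})|$ combine, via the Eichler mass formula $\sum_{[\widetilde{E}]}1/|\text{Aut}(\widetilde{E})|=(p-1)/24$, into a probability measure on $\text{Ell}_{\text{ss}}(\overline{\mathbb{F}}_p)$.

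I would then specialize $f$ to the indicator function of a single class $\{[\widetilde{E}]\}$ and invoke the elementary estimate $|\text{Aut}(\widetilde{E})|\leq 24$, valid for any elliptic curve over $\overline{\mathbb{F}}_p$ (and saturated, for instance, when $p=2$ and $j(\widetilde{E})=0$), to conclude that
\[
\frac{|\Psi_{\mathfrak{p},\mathcal{O}_L}^{-1}([\widetilde{E}])|}{h(D)} \;\geq\; \frac{24}{(p-1)\,|\text{Aut}(\widetilde{E})|} - c|D|^{-\eta} \;\geq\; \frac{1}{p-1} - c|D|^{-\eta}.
\]
Multiplying through by $h(D)$ yields the claimed lower bound.

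The main obstacle lies entirely in Michel's theorem itself, whose proof rests on subconvexity estimates for Rankin--Selberg $L$-functions of weight-one theta series against weight-two modular forms. On our side, the only additional work is a routine compatibility check, via the Deuring correspondence and the good reduction theory of CM elliptic curves, that our reduction map $\Psi_{\mathfrak{p},\mathcal{O}_L}$ of \eqref{eq:supersingular_reduction_map} agrees with the reduction map studied in \cite{Michel_2004}.
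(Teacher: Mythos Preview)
Your proposal is correct and follows the same strategy as the paper: invoke Michel's equidistribution theorem and then bound the mass of each supersingular class from below by $(p-1)^{-1}$. The only difference is in how that mass bound is obtained. The paper splits into cases: for $p>5$ it uses Silverman's bound $|\text{Aut}(\widetilde{E})|\leq 6$ in characteristic $\neq 2,3$ together with the explicit count of supersingular $j$-invariants to get $\mu_p(e_i)\geq (3n)^{-1}\geq 4/(p+13)\geq (p-1)^{-1}$, and for $p\in\{2,3,5\}$ it uses that there is a unique supersingular class. Your route, using the uniform bound $|\text{Aut}(\widetilde{E})|\leq 24$ directly in the Eichler mass formula, avoids the case distinction entirely and is cleaner; the paper's version, on the other hand, would give a sharper constant for $p>5$ if one cared.
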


\begin{proof}
We want to apply \cite[Theorem 3]{Michel_2004} with $G = G_K$. Following \cite{Michel_2004}, we denote the cardinality of $\text{Ell}_{\text{ss}}(\overline{\mathbb{F}}_p)$ by $n$ and its members by $e_1,\hdots,e_n$. We also use the probability measure $\mu_p$ on $\text{Ell}_{\text{ss}}(\overline{\mathbb{F}}_p)$ as defined in \cite[top of p. 189]{Michel_2004}.

If $p > 5$, it follows from \cite[III, Theorem 10.1]{Silverman_book_2009} and the definition of $\mu_p$ that $\mu_p(e_i) \geq (3n)^{-1}$ for all $i = 1,\hdots,n$. Furthermore, we have \[(3n)^{-1} \geq 4/(p+13) \geq (p-1)^{-1}\]
thanks to \cite[Chapter 13, Table 1 on p. 264]{Husemoeller_book} and the fact that $p > 5$.

If $p \in \{2,3,5\}$, then $|\text{Ell}_{\text{ss}}(\overline{\mathbb{F}}_p)| = 1$ thanks to \cite[Chapter 13, Theorem 4.1 and the following paragraphs up to and including Table 1]{Husemoeller_book} so that $n = 1$ and $\mu_p(e_1) = 1 \geq (p-1)^{-1}$.

Hence, Theorem \ref{thm:supersingular_equidistribution} follows from \cite[Theorem 3]{Michel_2004} with $G = G_K$.
\end{proof}

We can now prove Theorem \ref{thm: BCZ_fails}.

\begin{proof}[Proof of Theorem \ref{thm: BCZ_fails}]
Let $\kappa \in \left(0,\frac{1}{p-1}\right)$ be arbitrary and fix an algebraic closure $\overline{K}$ of $K$ as well as a prime $\mathfrak{P} \subseteq \overline{K}$ lying over $\mathfrak{p}$. By Theorem \ref{thm:supersingular_equidistribution}, there exist fundamental discriminants $D \in \mathbb{D}$ with $|D|$ arbitrarily large such that $p$ is inert in $\mathbb{Q}(\sqrt{D}) \subseteq \overline{K}$ and
\begin{equation}\label{eq:theabove}
\left |\left \{j \in \overline{K}; H_D(j) = 0, j \equiv j(E_i)\mbox{ mod } \mathfrak{P} \right \} \right | \geq (\deg H_D)\kappa
\end{equation}
for $i = 1,2$. For any such $D$, let $K_D \subseteq \overline{K}$ denote the compositum of $K$ and the Hilbert class field of $\mathbb{Q}(\sqrt{D})$, and set $\mathfrak{P}_D = \mathfrak{P} \cap K_D$. By \cite[II, Theorem 4.1]{Silverman_book_1994}, $K_D$ is the splitting field of $H_D$ over $K(\sqrt{D})$. Together with inequality \eqref{eq:theabove}, this implies that there exist integers $e_{D,i} \geq (\deg H_D)\kappa$ such that
\[ H_D(j(E_i)) = \prod_{j \in K_D,\hspace{2pt}H_D(j)=0}{(j(E_i)-j)} \in \mathfrak{P}_D^{e_{D,i}}\]
for $i = 1,2$. Since $p$ does not ramify in $\mathbb{Q}(\sqrt{D})$, the extension $K \subseteq K_D$ is unramified over $\mathfrak{p}$ by \cite[II, Example 3.3]{Silverman_book_1994} and \cite[Propositions B.2.3 and B.2.4]{BombieriGubler}. We deduce that $H_D(j(E_i)) \in \mathfrak{p}^{e_{D,i}}$ for $i = 1,2$. Together with the lower bound for $e_{D,i}$, this implies that
\[ (\deg H_D)^{-1}\log N(\gcd(H_D(j(E_1)),H_D(j(E_2)))) \geq \kappa\log N(\mathfrak{p}) \geq \kappa \log p.\]
The theorem follows.
\end{proof}

\begin{rmk}
The attentive reader has certainly noticed the similarity between the statements of Theorems \ref{thm: gcd singular moduli positive characteristic} and \ref{thm: BCZ_fails}. However, arguing along the lines of the proof of Theorem \ref{thm: gcd singular moduli positive characteristic} does not suffice to prove Theorem \ref{thm: BCZ_fails}. First of all, we do not have an analogue of Proposition \ref{prop:non-trivial gcd in positive characteristic} that would allow us to find at least one discriminant whose associated greatest common divisor is non-trivial. Another obstacle is that passing from an order of discriminant $D$ to an order of discriminant $Dp^2$ gives rise to an extension of ring class fields that is totally ramified at each prime above $p$; for a proof of this fact, see, \emph{e.g.}, \cite[Proposition 2.3~(1)]{LiLiOuyang_Preprint}. Therefore, it is not clear whether the norm of the greatest common divisor increases at all when one passes from discriminant $D$ to discriminant $Dp^2$.

On the other hand, supersingular primes can be used to prove Theorem \ref{thm: gcd singular moduli positive characteristic} in some special cases: if there exists some $\alpha \in \mathcal{C}(F)$ such that both $A(\alpha)$ and $B(\alpha)$ are $j$-invariants of supersingular elliptic curves, then we can deduce Theorem \ref{thm: gcd singular moduli positive characteristic} from Theorem \ref{thm:supersingular_equidistribution} with a similar proof as the one of Theorem \ref{thm: BCZ_fails}. In this case, we can even strengthen Theorem \ref{thm: gcd singular moduli positive characteristic} to say that the limit superior is greater than or equal to $(p-1)^{-1}$. However, since the set of supersingular $j$-invariants in $F$ is finite, it is clear that, for some choices of $A,B \in R\backslash F$, no such $\alpha$ exists; for instance, take any $A \in R\backslash F$ and $B = A + b$, where $b \in F$ is not a difference of two supersingular $j$-invariants.
\end{rmk}

It is easy to construct examples where $E_1$ and $E_2$ have no complex multiplication and are not geometrically isogenous to each other, but nevertheless have a common prime of potential good supersingular reduction: fix any maximal ideal $\mathfrak{p}$ of $\mathcal{O}_{K,S}$. By \cite[Theorem 14.18]{Cox_book_2013} and since $0$ is supersingular in characteristics $2$ and $3$, we can choose (not necessarily distinct) supersingular $j$-invariants $j_{1,\mathfrak{p}},j_{2,\mathfrak{p}} \in \mathcal{O}_{K,S}/\mathfrak{p}$. All but finitely many of the lifts of $j_{1,\mathfrak{p}}$ to $\mathcal{O}_{K,S}$ are not singular moduli since the degree of a singular modulus equals the class number of the corresponding imaginary quadratic order and this goes to $\infty$ with the absolute value of the discriminant by \cite[Chapter XVI, Theorem 4]{Lang_book_1994} and \cite[Chapter I, Proposition 12.9]{Neukirch_book}. Fix any such lift $j_1 \in \mathcal{O}_{K,S}$. Then all but finitely many of the lifts of $j_{2,\mathfrak{p}}$ to $\mathcal{O}_{K,S}$ are $j$-invariants of elliptic curves without complex multiplication as above. Moreover, all but finitely many of these lifts are $j$-invariants of elliptic curves that are not geometrically isogenous to the elliptic curve with $j$-invariant $j_1$ since the existence of such an isogeny implies the existence of an isogeny whose degree is bounded in terms of $K$ and $j_1$ by the main theorem of \cite{Masser_Wuestholz_1990}. Hence, one can find many examples where the hypothesis of Theorem \ref{thm: BCZ_fails} holds although the two elliptic curves have no complex multiplication and are not geometrically isogenous to each other.

We know that there exist infinitely many common supersingular primes for $E_1$ and $E_2$ in the following cases:
\begin{enumerate}
    \item Both $E_1$ and $E_2$ have complex multiplication, see \cite[Chapter 13, Theorem 12]{Lang_1987}.
    \item Both $E_1$ and $E_2$ do not have complex multiplication, one of them can be defined over a number field with at least one real embedding, and $E_1$ and $E_2$ are geometrically isogenous, see \cite{Elkies_1989,Elkies_1987}.
\end{enumerate}

Based on \cite[Remark 2 on p. 37]{LangTrotter}, Fouvry and Murty conjecture in \cite[Equation (1.4)]{Fouvry_Murty_1995} that there are infinitely many common supersingular primes if both $E_1$ and $E_2$ do not have complex multiplication, are defined over $\mathbb{Q}$, and are not geometrically isogenous. In the same article, they also prove an averaged version of this conjecture. A similar averaged result is known if both $E_1$ and $E_2$ are defined over the rationals, $E_1$ has complex multiplication, and $E_2$ does not, see \cite[Theorem 10]{James_Smith_2013} and \cite[pp. 199-200]{DavidPappalardi}.

We can also consider the following modular version of \cite[Conjecture A]{AilonRudnick} by Ailon and Rudnick and \cite[Conjecture 10]{Silverman_2005} by Silverman: for which $a$, $b \in \mathcal{O}_{K,S}$ do there exist infinitely many $D \in \mathbb{D}$ such that $H_D(a)$ and $H_D(b)$ are coprime? Does it suffice to assume that $a \neq b$ or at least that neither $a$ nor $b$ is a singular modulus and $\Phi_N(a,b) \neq 0$ for all $N \in \mathbb{N}$? This problem would be trivial if $H_D(a) \in \mathcal{O}_{K,S}^{\ast}$ or $H_D(b) \in \mathcal{O}_{K,S}^{\ast}$ for infinitely many $D \in \mathbb{D}$. If $a$ and $b$ are not both singular moduli and $S \neq \emptyset$, then it is an open problem whether this can happen or not. We can at least show that $H_D(a)$ cannot belong to $\mathcal{O}_{K,S}^{\ast}$ for all but finitely many $D \in \mathbb{D}$:

\begin{thm}\label{thm:limsupprime}
Let $K$ be a number field and let $S$ be a finite set of maximal ideals of $\mathcal{O}_K$. Let $j \in \mathcal{O}_{K,S}$ and let $E_{/K}$ be an elliptic curve with $j(E)=j$. Then
\[
\limsup_{D \in \mathbb{D},\hspace{2pt}|D| \to \infty} P_{\emph{o}}(H_D(j)) = \infty,
\]
where $P_{\emph{o}}(a)$ is defined as follows: $P_{\emph{o}}(0) = \infty$ and for $a \in \mathcal{O}_{K,S}\backslash\{0\}$, $P_{\emph{o}}(a)$ denotes the largest norm of a prime factor of $a$ which is of good ordinary reduction for $E$ and $P_{\emph{o}}(a) = 1$ if no such prime factor exists.
\end{thm}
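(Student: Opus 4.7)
The plan is: for each $C, D_0 > 0$, produce a discriminant $D \in \mathbb{D}$ with $|D| > D_0$ such that $H_D(j)$ is divisible in $\mathcal{O}_{K,S}$ by a prime of good ordinary reduction for $E$ of norm exceeding $C$. Since $C$ and $D_0$ are arbitrary, this will immediately give $\limsup_{|D|\to\infty} P_{\mathrm{o}}(H_D(j)) = \infty$.

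First I would locate a good ordinary prime $\mathfrak{p}$ of $\mathcal{O}_{K,S}$ with large norm, and show that $\mathfrak{p}$ divides $H_{D'}(j)$ for some (possibly small) $D' \in \mathbb{D}$. The primes of good ordinary reduction for $E$ form an infinite set in both cases: if $E$ has no CM this is Serre's density-one statement recalled in Subsection \ref{subsec:reduction_of_elliptic_curves}, and if $E$ has CM the primes whose residue characteristic splits in the CM field of $E$ yield good ordinary reduction outside finitely many bad primes and have density $1/2$ by Chebotarev. Since only finitely many primes have norm at most $C$, I may pick such a $\mathfrak{p}$ with $N(\mathfrak{p}) > C$. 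The reduced curve $E_\mathfrak{p}/k_\mathfrak{p}$ is ordinary, so its geometric endomorphism ring is an order $\mathcal{O}'$ in an imaginary quadratic field; set $D' := \mathrm{disc}(\mathcal{O}') \in \mathbb{D}$. Deuring's lifting theorem (the same input used just before Proposition \ref{prop:non-trivial gcd in positive characteristic}) says that every ordinary $j$-invariant over $\overline{\mathbb{F}}_p$ with endomorphism ring $\mathcal{O}'$ arises as the reduction modulo some prime above $p$ of a singular modulus of discriminant $D'$; applied to $j(E_\mathfrak{p}) = j \bmod \mathfrak{p}$, this shows that $j \bmod \mathfrak{p}$ is a root of $H_{D'} \bmod p$, hence $\mathfrak{p} \mid H_{D'}(j)$ in $\mathcal{O}_{K,S}$.

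The final step amplifies this single divisibility to arbitrarily large discriminants via Proposition \ref{prop:Hilbert_class_polynomial_mod_p}: for every $n \geq 1$ one has $H_{D'p^{2n}}(X) \equiv H_{D'}(X)^{k_n} \bmod p$ in $\mathbb{Z}[X]$ for a positive integer $k_n$, so evaluating at $j$ and reducing modulo $\mathfrak{p}$ gives $H_{D'p^{2n}}(j) \equiv H_{D'}(j)^{k_n} \equiv 0 \bmod \mathfrak{p}$. Choosing $n$ so that $|D'p^{2n}| > D_0$ and setting $D := D'p^{2n}$ (which lies in $\mathbb{D}$ because $p^{2n}$ is a positive integer square, so $D < 0$ and $D \equiv 0,1 \bmod 4$), we obtain $P_{\mathrm{o}}(H_D(j)) \geq N(\mathfrak{p}) > C$ and $|D| > D_0$, as required. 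There is no genuine obstacle in this argument: the whole idea is to combine a single ``seed'' divisibility provided by Deuring lifting with the ``vertical'' amplification supplied by Proposition \ref{prop:Hilbert_class_polynomial_mod_p}, both of which are already available in the paper.
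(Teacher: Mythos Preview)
Your proposal is correct and follows essentially the same route as the paper: pick an ordinary prime $\mathfrak{p}$ of large norm, use Deuring's lifting theorem to find $D'$ with $\mathfrak{p} \mid H_{D'}(j)$, then amplify via Proposition \ref{prop:Hilbert_class_polynomial_mod_p} to make $|D| = |D'p^{2n}|$ as large as desired. The paper organises this as a recursive construction of a sequence $(D_{\mathfrak{p}_n})$ (and therefore has to arrange $H_{D_{\mathfrak{p}_n}}(j)\neq 0$ at each step), whereas your direct ``for all $C,D_0$'' phrasing sidesteps that minor technicality since $P_{\mathrm{o}}(0)=\infty$ anyway, but the underlying idea is identical.
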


Note that, if $P(a)$ denotes the largest norm of a prime factor of $a \in \mathcal{O}_{K,S}\backslash(\mathcal{O}_{K,S}^{\ast} \cup \{0\})$ and $P(a) = \infty$ or $1$ for $a = 0$ and $a \in \mathcal{O}_{K,S}^{\ast}$ respectively, then $\lim_{D \in \mathbb{D},\hspace{2pt}|D| \to \infty}{P(H_D(j))} = \infty$ is equivalent to the fact that for every finite set $\widetilde{S}$ of maximal ideals of $\mathcal{O}_K$, there are at most finitely many $D \in \mathbb{D}$ such that $H_D(j)$ is an $\widetilde{S}$-unit. As mentioned above, it is still an open problem whether this is true or not. It is known if either $\widetilde{S} = \emptyset$ \cite{Habegger_2015} or if $j$ is a singular modulus \cite{HMRL_Preprint}. See \cite{BHK_Preprint, Cai, Campagna_2021, Campagna_Preprint, Stefan_Preprint} for work on making these results effective.

This actually provides another example where the analogy between Shimura varieties and algebraic groups fails to hold perfectly. Namely, there is a discrepancy between Habegger's \cite[Theorem 2]{Habegger_2015} in the modular case and \cite[Theorems 0.1 and 0.2]{BIR_2008} by Baker, Ih, and Rumely in the multiplicative and elliptic case: in the modular case, the fixed non-cuspidal point(s) can be completely arbitrary while in the multiplicative and in the elliptic case, the fixed point cannot be a torsion point. We can ask whether removing the supersingular prime factors eliminates this discrepancy as well: certainly, if $j \in \mathcal{O}_{K,S}$ is a singular modulus, then $N(H_D(j))$ is divisible only by supersingular primes for $j$ for infinitely many $D \in \mathbb{D}$. Does the converse hold as well?

\begin{proof}[Proof of Theorem \ref{thm:limsupprime}]
Fix an algebraic closure $\overline{K}$ of $K$. In the proof, we will repeatedly use the fact that there exist infinitely many primes of good ordinary reduction for $E$, see Section \ref{subsec:reduction_of_elliptic_curves}. The goal of our proof is to construct a monotonically decreasing sequence of discriminants $D$ for which $P_{\mathrm{o}}(H_D(j))$ goes to $\infty$. We do this recursively as follows. Let $\mathfrak{p}_1 \subseteq \mathcal{O}_{K,S}$ be a maximal ideal of good ordinary reduction for $E$ and choose a prime $\mathfrak{P}_1 \subseteq \overline{K}$ lying above it. By Deuring's Lifting Theorem \cite[Chapter 13, Theorem 14]{Lang_1987}, the fact that every elliptic curve over a finite field has complex multiplication, and Proposition \ref{prop:Hilbert_class_polynomial_mod_p}, there exist a discriminant $D_{\mathfrak{p}_1} \in \mathbb{Z}_{<0}$ and an elliptic curve ${E_{\mathfrak{p}_1}}_{/\overline{K}}$ with complex multiplication by the imaginary quadratic order of discriminant $D_{\mathfrak{p}_1}$ such that $E_{\mathfrak{p}_1} \text{ mod } \mathfrak{P}_1 \simeq E_{\overline{K}} \text{ mod } \mathfrak{P}_1$ and $H_{D_{\mathfrak{p}_1}}(j) \neq 0$. Note that this implies in particular that $\mathfrak{p}_1$ divides $H_{D_{\mathfrak{p}_1}}(j)$ so that $N(\mathfrak{p}_1) \leq P_{\mathrm{o}}(H_{D_{\mathfrak{p}_1}}(j)) < \infty$. 

Suppose now that we have constructed a sequence of primes $\mathfrak{p}_1$, \dots, $\mathfrak{p}_{n-1}$ such that $N(\mathfrak{p}_1) < \cdots < N(\mathfrak{p}_{n-1})$ and a sequence of discriminants $D_{\mathfrak{p}_1} > \cdots > D_{\mathfrak{p}_{n-1}}$ such that $N(\mathfrak{p}_m) \leq P_{\mathrm{o}}(H_{D_{\mathfrak{p}_m}}(j)) < \infty$ for all $m \in \{1,...,n-1\}$. We take some maximal ideal $\mathfrak{p}_n$ of good ordinary reduction for $E$ such that $N(\mathfrak{p}_n) > \max_{m=1,\hdots,n-1}\{P_{\mathrm{o}}(H_{D_{\mathfrak{p}_m}}(j))\}$.

For the same reasons as above, there exist a prime $\mathfrak{P}_n \subseteq \overline{K}$ lying above $\mathfrak{p}_n$, a discriminant $D_{\mathfrak{p}_n}$, and an elliptic curve ${E_{\mathfrak{p}_n}}_{/\overline{K}}$ with complex multiplication by the imaginary quadratic order of discriminant $D_{\mathfrak{p}_n}$ satisfying $E_{\mathfrak{p}_n} \text{ mod } \mathfrak{P}_n \simeq E \text{ mod } \mathfrak{P}_n$, $H_{D_{\mathfrak{p}_n}}(j) \neq 0$, and $|D_{\mathfrak{p}_n}| > |D_{\mathfrak{p}_{n-1}}|$. This implies that $N(\mathfrak{p}_n) \leq P_{\mathrm{o}}(H_{D_{\mathfrak{p}_n}}(j)) < \infty$.

Iterating this construction, we find a sequence of discriminants $(D_{\mathfrak{p}_n})_{n \in \mathbb{N}}$ with the desired properties. This concludes the proof.
\end{proof}

\begin{rmk}
For $a \in \mathcal{O}_{K,S}\backslash\{0\}$, let us denote by $P_{\mathrm{ss}}(a)$ the largest norm of a prime factor of $a$ which is of good supersingular reduction for $E$ and set $P_{\mathrm{ss}}(a) = 1$ if there is no such prime factor and $P_{\mathrm{ss}}(0) = \infty$. If there are infinitely many primes of good supersingular reduction for $E$, which is known in certain cases thanks to \cite{Elkies_1989,Elkies_1987}, then we can follow the proof of Theorem \ref{thm:limsupprime} to show that $\limsup_{D \in \mathbb{D},\hspace{2pt}|D| \to \infty}{P_{\mathrm{ss}}(H_D(j))} = \infty$.
\end{rmk}

\section{The support problem}\label{sec:supportproblem}

In the previous section, we studied $\gcd(H_D(a),H_D(b))$ for varying $D\in \mathbb{D}$ and fixed $a,b$ in some Dedekind domain. This greatest common divisor is as big as possible if $H_D(a)$ divides $H_D(b)$ (or \emph{vice versa}). We now want to investigate in which cases this can happen for all but finitely many discriminants $D$. In fact, we want to understand more generally in which cases the set of prime ideals dividing $H_D(a)$ (its \emph{support}) is contained in the set of prime ideals dividing $H_D(b)$ for all but finitely many $D$. If we pass from the modular to the multiplicative world and replace the polynomials $H_D(T)$ ($D \in \mathbb{D}$) by the polynomials $T^n-1$ ($n \in \mathbb{N}$), then this question has been answered by Corrales-Rodrig\'{a}\~{n}ez and Schoof \cite{Corrales-Rodriganez_Schoof_1997}.

We are therefore led to introduce a general setting which encompasses both versions of this problem: let $R$ be a Dedekind domain which is not a field and let $\mathcal{N}$ be an arbitrary countably infinite set. We are given a polynomial $f_n(T) \in R[T]$ for each $n \in \mathcal{N}$ and two elements $a,b \in R$. If for all but finitely many $n \in \mathcal{N}$, every prime ideal factor of $f_n(a)$ also divides $f_n(b)$, i.e., if the \emph{support property} holds for all but finitely many $n \in \mathcal{N}$, what can we say about $a$ and $b$? Following \cite{Corrales-Rodriganez_Schoof_1997}, we will refer to this question as the \textit{support problem for the polynomials $f_n(T) \in R[T]$} ($n \in \mathcal{N}$). Clearly, the answer cannot be of universal nature and it depends very much on $R$ and the polynomials $f_n(T)$ for $n \in \mathcal{N}$.

In many of the instances of the support problem that we will consider later, there are some trivial subcases that are easily dealt with. We present them in the following examples:

\begin{example}[Isotriviality] \label{ex:support_1}
Let $F$ be an algebraically closed field and let $R$ be the coordinate ring of a smooth affine irreducible curve $\mathcal{C}_{/F}$. Recall that $R$ is a Dedekind domain by \cite[Examples 15.2~(2)]{GoertzWedhorn}. In this example, we study the case where $f_n(T) \in F[T] \subseteq R[T]$ for all $n \in \mathcal{N}$ and either $a \in F$ or $b \in F$. We will see that the support problem is often trivial in this case.

\begin{enumerate}
    \item Set
\[
Z:=\{\tau \in F; f_n(\tau)=0 \mbox{ for infinitely many $n \in \mathcal{N}$ such that $f_n \neq 0$}\}.
\]
For instance, if the polynomials $f_n(T)$ ($n \in \mathcal{N}$) are pairwise coprime, then $Z = \emptyset$. For every $a \in F \backslash Z$ and for all but finitely many $n \in \mathcal{N}$ such that $f_n \neq 0$, the values $f_n(a)$ are non-zero and hence units in $R$. In particular, for every $b \in R$ we have that $f_n(a)$ divides $f_n(b)$ for all but finitely many $n \in \mathcal{N}$.

On the other hand, if $a \in Z$ and for all but finitely many $n \in \mathcal{N}$, every prime ideal dividing $f_n(a)$ also divides $f_n(b)$, then we deduce that $f_{n_0}(b) = 0$ for some $n_0 \in \mathcal{N}$ such that $f_{n_0} \neq 0$. It follows that $b \in F$ since $F$ is algebraically closed.

\item Let now $f_n(T) \in F[T]$ ($n \in \mathcal{N}$) and $b \in F$ be such that 
\[
Z:=\bigcup_{n \in \mathcal{N},f_n(b) \neq 0} \{\tau \in F; f_n(\tau)=0\}
\]
is infinite. For instance, if the polynomials $f_n(T)$ ($n \in \mathcal{N}$) are all non-constant and pairwise coprime, then this holds for every choice of $b \in F$. It in particular implies that the set of polynomials $f_n(T)$ that do not have $b$ as a zero is infinite. Since $f_n(T) \in F[T]$ and $b \in F$, the value $f_n(b)$ is a unit in $R$ as soon as $f_n(b) \neq 0$. If for all but finitely many $n \in \mathcal{N}$, every prime ideal factor of $f_n(a)$ also divides $f_n(b)$, it follows that $f_n(a)$ is a unit for all but finitely many $n \in \mathcal{N}$ such that $f_n(b) \neq 0$, which implies that $a-\tau \in R^{\ast}$ for all but finitely many $\tau \in Z$. Since $Z$ is infinite, the morphism $\mathcal{C} \to \mathbb{A}^1_F$ induced by $a$ must then be constant by \cite[Proposition 15.4~(1)]{GoertzWedhorn} and it follows that $a \in F$.

On the other hand, if $Z$ is finite and no element of $Z$ belongs to the image of the morphism $\mathcal{C} \to \mathbb{A}^1_F$ induced by $a$, then for all $n \in \mathcal{N}$, every prime ideal dividing $f_n(a)$ also divides $f_n(b)$ since either $f_n(b) = 0$ or $f_n(a)$ is equal, up to scaling by a constant in $F^{\ast}$, to a product of factors $a-\tau \in R^{\ast}$ for $\tau \in Z$.
\end{enumerate}
\end{example}

\begin{example}[Frobenius]\label{ex:support_2}
Suppose that $\mathbb{F}_q \subseteq R$ for some prime power $q$ and that $f_n(T) \in \mathbb{F}_q[T] \subseteq R[T]$ for all $n \in \mathcal{N}$. Then for every $c \in R$ and for all $k,\ell \in \mathbb{Z}$, $k,\ell \geq 0$, we have that every prime ideal that divides $f_n(c^{q^k}) = f_n(c)^{q^k}$ also divides $f_n(c^{q^{\ell}}) = f_n(c)^{q^{\ell}}$ for all $n \in \mathcal{N}$. Thus, if $a = c^{q^k}$ and $b = c^{q^{\ell}}$, then for all $n \in \mathcal{N}$, every prime ideal dividing $f_n(a)$ also divides $f_n(b)$.
\end{example}

In the case of the \emph{multiplicative support problem}, \emph{i.e.}, if $\mathcal{N} = \mathbb{N}$ and $f_n(T) := T^n-1$ for $n \in \mathcal{N}$, we have that $f_n(a) \mid f_n(a^k)$ for every $a \in R$, every $k \in \mathbb{N} \cup \{0\}$, and every $n \in \mathcal{N}$. If $a \in R^{\ast}$, this also holds for negative exponents $k$ since $f_n(a^{-1}) = -f_n(a)a^{-n}$. Answering a question of Erd\H{o}s at the 1988 number theory conference in Banff, Corrales-Rodrig\'{a}\~{n}ez and Schoof solved the multiplicative support problem in the number field case by proving that these are the only possibilities if $ab \neq 0$:

\begin{thm}[Corrales-Rodrig\'{a}\~{n}ez--Schoof, \cite{Corrales-Rodriganez_Schoof_1997}]\label{thm:multiplicative_support_1}
Let $K$ be a number field and let $S$ be a finite set of maximal ideals of $\mathcal{O}_K$. If $a,b \in \mathcal{O}_{K,S}\backslash\{0\}$ satisfy that for all but finitely many $n \in \mathbb{N}$, every prime ideal of $\mathcal{O}_{K,S}$ dividing $a^n-1$ also divides $b^n-1$, then $b = a^k$ for some $k \in \mathbb{Z}$.
\end{thm}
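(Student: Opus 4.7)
The plan is to prove the theorem via Chebotarev density combined with Kummer theory, following the approach of Corrales-Rodrig\'a\~{n}ez--Schoof. First I would enlarge $S$ to contain all prime divisors of $a$ and $b$, so that $a, b \in \mathcal{O}_{K,S}^{\times}$; this operation preserves the hypothesis (possibly after enlarging the exceptional finite set of $n$'s). For every $\mathfrak{p} \notin S$, write $d_{\mathfrak{p}}(c)$ for the multiplicative order of $c$ in the finite group $(\mathcal{O}_{K,S}/\mathfrak{p})^{\times}$. For all but finitely many $\mathfrak{p}$ we have $d_{\mathfrak{p}}(a) > N_0$, so substituting $n = d_{\mathfrak{p}}(a)$ into the hypothesis yields $d_{\mathfrak{p}}(b) \mid d_{\mathfrak{p}}(a)$. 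After absorbing the finitely many remaining primes into $S$, this divisibility holds for every $\mathfrak{p} \notin S$.

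I would then dispose of the case in which $a$ is a root of unity: then $d_{\mathfrak{p}}(a) \leq \mathrm{ord}(a)$ is uniformly bounded, hence so is $d_{\mathfrak{p}}(b)$, and since each of the finitely many values $b^{n} - 1$ with $n$ below this bound has only finitely many prime factors, this forces $b \in \mu(K)$. Within $\mu(K)$ the divisibility yields $\mathrm{ord}(b) \mid \mathrm{ord}(a)$, so $b \in \langle a \rangle$. From now on I would assume that $a$ has infinite order and, for contradiction, that $b \notin \langle a \rangle$.

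By Dirichlet's $S$-unit theorem, $G := \mathcal{O}_{K,S}^{\times}/\langle a \rangle$ is a finitely generated abelian group in which the class of $b$ is non-trivial. Therefore some prime $\ell$ satisfies $b \notin \langle a \rangle \cdot (\mathcal{O}_{K,S}^{\times})^{\ell}$; picking $\ell$ odd whenever possible, the classical Kummer descent $(K(\zeta_{\ell})^{\times})^{\ell} \cap K^{\times} = (K^{\times})^{\ell}$ guarantees that $b \notin \langle a \rangle \cdot (K(\zeta_{\ell})^{\times})^{\ell}$, so $L := K(\zeta_{\ell}, a^{1/\ell}, b^{1/\ell})$ is a proper degree-$\ell$ extension of $L_0 := K(\zeta_{\ell}, a^{1/\ell})$.

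Applying the Chebotarev density theorem to $L/K$ produces a prime $\mathfrak{p} \notin S$, unramified in $L$ and coprime to $\ell$, whose Frobenius fixes $L_0$ pointwise but not $b^{1/\ell}$. For such a $\mathfrak{p}$, $\ell \mid N(\mathfrak{p}) - 1$ (since $\zeta_{\ell}$ lies in the residue field), $a$ is an $\ell$-th power modulo $\mathfrak{p}$ (so $\ell \nmid d_{\mathfrak{p}}(a)$), while $b$ is not (so $\ell \mid d_{\mathfrak{p}}(b)$). Therefore $d_{\mathfrak{p}}(b) \nmid d_{\mathfrak{p}}(a)$, contradicting the reformulation of the hypothesis. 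The main obstacle I foresee is precisely the Kummer-descent step: in the subcase where the class of $b$ in $G$ has $2$-primary order only (so the only available prime is $\ell = 2$), one runs into Capelli's obstruction $a = -4c^{4}$ and must argue separately, for instance via a direct Zsygmondy-type analysis of the factorization $a^{2n} - 1 = (a^{n} - 1)(a^{n} + 1)$ to exhibit infinitely many $n$ at which the support condition fails.
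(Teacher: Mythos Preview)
The paper does not reprove the Corrales-Rodrig\'a\~nez--Schoof theorem at all: it simply cites \cite[Theorem 1]{Corrales-Rodriganez_Schoof_1997} for the case where the hypothesis holds for every $n$, and then reduces the ``all but finitely many $n$'' statement to that case by enlarging $S$ (when $a$ is not a root of unity) or by a direct argument (when $a$ is). Your proposal instead attempts a self-contained proof along the lines of the original paper, so the comparison is really between your sketch and the argument in \cite{Corrales-Rodriganez_Schoof_1997}.

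There is a genuine gap in your Kummer step. You assert that because the class of $b$ in $G=\mathcal{O}_{K,S}^\times/\langle a\rangle$ is non-trivial, some prime $\ell$ satisfies $b\notin\langle a\rangle\cdot(\mathcal{O}_{K,S}^\times)^\ell$. This is false: a non-trivial element of a finitely generated abelian group can be an $\ell$-th power for every prime $\ell$. Concretely, take $K=\mathbb{Q}$, $S=\{2\}$, $a=16$, $b=4$; then $b\notin\langle a\rangle$, yet $b\in\langle a\rangle\cdot(\mathcal{O}_{K,S}^\times)^2$ because $4=2^2$, and for every odd $\ell$ one has $\langle 16\rangle\cdot(\mathcal{O}_{K,S}^\times)^\ell=\mathcal{O}_{K,S}^\times$ since $\gcd(4,\ell)=1$. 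So no single $\ell$-th Kummer layer separates $b$ from $\langle a\rangle$, and your Chebotarev contradiction never gets off the ground. The actual Corrales-Rodrig\'a\~nez--Schoof argument works with the full tower $K(\zeta_{\ell^n},a^{1/\ell^n},b^{1/\ell^n})$ for varying $n$, which is what lets one detect that $b\notin\langle a\rangle$ even when the image of $b$ is $\ell$-divisible in $G$ for each individual $\ell$.

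Two smaller points: your parenthetical ``$\ell\nmid d_\mathfrak{p}(a)$'' does not follow from $a$ being an $\ell$-th power modulo $\mathfrak{p}$ (one only gets $v_\ell(d_\mathfrak{p}(a))<v_\ell(N(\mathfrak{p})-1)$, which still suffices for $d_\mathfrak{p}(b)\nmid d_\mathfrak{p}(a)$); and the Capelli obstruction you worry about is a red herring here, since for prime $\ell$ the polynomial $x^\ell-a$ is reducible only when $a\in(K^\times)^\ell$, and for $\ell=2$ there is no descent issue because $\zeta_2=-1\in K$ already.
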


In the proof of Theorem \ref{thm:multiplicative_support_1} as well as in many of the proofs that follow, we will sometimes enlarge $S$ to a bigger set $S'$. When doing this, we will always identify the primes of $\mathcal{O}_{K,S'}$ as well as the primes of $\mathcal{O}_{K,S}$ with subsets of the primes of $\mathcal{O}_{K}$ by identifying $\mathfrak{p}$ with $\mathfrak{p} \cap \mathcal{O}_{K}$. We will also implicitly use that divisibility by primes of $\mathcal{O}_{K,S}$ outside of $S'$ is preserved when we replace $S$ by $S'$.

\begin{proof}
If the hypothesis holds for all $n \in \mathbb{N}$, then this is precisely \cite[Theorem 1]{Corrales-Rodriganez_Schoof_1997}. For the general case, let us assume that the hypothesis holds for $n > N_0$ for some $N_0 \in \mathbb{N}$ and let us see how the theorem follows from the previous case: if $a$ is not a root of unity, then one just has to replace $S$ by a set $S' \supset S$ that contains all prime factors of $\prod_{i=1}^{N_0}{(a^i-1)}$. If, on the other hand, $a$ is a root of unity, then $a^n-1 = 0$ for infinitely many $n \in \mathbb{N}$ and one deduces that $b$ is a root of unity and even, by taking $n$ equal to two consecutive sufficiently large multiples of the order of $a$, that the order of $b$ divides the order of $a$. Thus, $b$ is a power of $a$ as desired.
\end{proof}

The zeroes of $T^n-1$ in $\mathbb{C}$ are the roots of unity of order dividing $n$. It might seem more natural to look at the polynomials whose zeroes are the roots of unity of order precisely $n$ since these are the minimal polynomials over $\mathbb{Q}$ of the special points of $\mathbb{G}_{m,\mathbb{C}}$ just as the Hilbert class polynomials are the minimal polynomials over $\mathbb{Q}$ of the special points of $Y(1)_{\mathbb{C}}$. Thus, we replace the polynomials $T^n-1$ by the cyclotomic polynomials $\Psi_n(T)$ ($n \in \mathbb{N} =: \mathcal{N}$) to create the \emph{cyclotomic support problem}. To prove the analogue of Theorem \ref{thm:multiplicative_support_1}, we need the following lemma.

\begin{lem}\label{lem:order_modulo_prime}
Let $K$ be a number field, let $S$ be a finite set of maximal ideals of $\mathcal{O}_K$, and fix a maximal ideal $\mathfrak{p} \subseteq \mathcal{O}_{K,S}$ of residue characteristic $p$. The following are equivalent for $a \in \mathcal{O}_{K,S}\backslash \mathfrak{p}$ and $k \in \mathbb{N}$ coprime to $p$:
\begin{enumerate}
    \item $a$ has order $k$ modulo $\mathfrak{p}$,
    \item $\Psi_{kp^{\ell}}(a) \in \mathfrak{p}$ for some $\ell \in \mathbb{Z}_{\geq 0}$, and
    \item $\Psi_{kp^{\ell}}(a) \in \mathfrak{p}$ for all $\ell \in \mathbb{Z}_{\geq 0}$.
\end{enumerate}
\end{lem}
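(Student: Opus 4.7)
The plan is to reduce everything to the single statement that $\Psi_k(a) \in \mathfrak{p}$ if and only if the reduction of $a$ in the residue field $k_{\mathfrak{p}} := \mathcal{O}_{K,S}/\mathfrak{p}$ has multiplicative order $k$, and then use the classical congruence relating $\Psi_{kp^\ell}$ to $\Psi_k$ modulo $p$ to link (2) and (3) to this.

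First, I would dispose of the equivalence between (2) and (3). The key input is the following well-known congruence: for $\ell \geq 1$ and $k$ coprime to $p$,
\[
\Psi_{kp^{\ell}}(T) \equiv \Psi_k(T)^{\varphi(p^{\ell})} \pmod{p}.
\]
This is deduced by induction on $\ell$ from the identity $\Psi_{km}(T^p)=\Psi_{kmp}(T)\Psi_{km}(T)$ valid whenever $p \nmid km$, combined with $\Psi_k(T^p) \equiv \Psi_k(T)^p \pmod p$ (the Frobenius identity in $\mathbb{F}_p[T]$). Applied to $a$, it shows that $\Psi_{kp^{\ell}}(a) \in \mathfrak{p}$ iff $\Psi_k(a)^{\varphi(p^{\ell})} \in \mathfrak{p}$, and since $\mathfrak{p}$ is prime this is equivalent to $\Psi_k(a) \in \mathfrak{p}$. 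Hence all the conditions in (2) and (3) collapse to the single condition $\Psi_k(a) \in \mathfrak{p}$, giving (2) $\Leftrightarrow$ (3).

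Next I would prove the equivalence (1) $\Leftrightarrow$ ``$\Psi_k(a) \in \mathfrak{p}$''. From the product formula
\[
T^k-1 \;=\; \prod_{d \mid k} \Psi_d(T)
\]
and the fact that $p \nmid k$ (so that $T^k-1$ is separable over $\mathbb{F}_p$), the reductions $\overline{\Psi_d}(T) \in \mathbb{F}_p[T]$ for $d \mid k$ are pairwise coprime. Therefore, for any element $\overline{a} \in \overline{k_{\mathfrak{p}}}^{\ast}$ with $\overline{a}^k = 1$, there is a unique $d \mid k$ with $\overline{\Psi_d}(\overline{a})=0$, and that $d$ is precisely the multiplicative order of $\overline{a}$ (since the roots of $\overline{\Psi_d}$ in $\overline{\mathbb{F}}_p$ are exactly the primitive $d$-th roots of unity, as $p \nmid d$). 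Noting that $a \notin \mathfrak{p}$ has some finite order $d$ dividing $|k_{\mathfrak{p}}^{\ast}|$, we conclude: $\Psi_k(a) \in \mathfrak{p}$ iff $d = k$, which is condition (1).

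Combining the two reductions gives (1) $\Leftrightarrow$ (2) $\Leftrightarrow$ (3). The only nontrivial ingredient is the congruence $\Psi_{kp^\ell} \equiv \Psi_k^{\varphi(p^\ell)} \pmod p$; once that is in hand everything else is formal. I do not expect any real obstacle here, since both ingredients are standard and the coprimality hypothesis $\gcd(k,p)=1$ ensures that the relevant polynomials behave well modulo $p$.
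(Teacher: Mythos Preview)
Your proposal is correct and follows essentially the same approach as the paper: both reduce everything to the congruence $\Psi_{kp^{\ell}}(T)\equiv\Psi_k(T)^{(p-1)p^{\ell-1}}\pmod p$ together with the separability of $T^k-1$ modulo $p$. One minor sloppiness: the identity $\Psi_{km}(T^p)=\Psi_{kmp}(T)\Psi_{km}(T)$ you quote only applies when $p\nmid km$, so it handles the passage from $\ell=0$ to $\ell=1$; for the inductive step $\ell-1\to\ell$ with $\ell\geq 2$ you need instead $\Psi_{n}(T^p)=\Psi_{np}(T)$ when $p\mid n$, which is equally elementary but should be mentioned.
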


\begin{proof}
It is clear that (1) implies (2) and that (3) implies (1), just take $\ell=0$ and, for the second implication, use that the polynomial $T^k-1$ modulo $\mathfrak{p}$ is separable. Finally, for every $\ell \in \mathbb{Z}_{\geq 1}$, we have the identity
\[ \Psi_{kp^{\ell}}(T) \equiv (\Psi_k(T))^{(p-1)p^{\ell-1}} \text{ mod } p,\]
which one can deduce from the identity
\[ T^{kp^{\ell}} - 1 = (T^k-1)^{p^{\ell}} \text{ mod } p\] using double induction on $(k,\ell)$, and so (2) implies (3).
\end{proof}

We are now ready to solve the cyclotomic support problem as a direct consequence of Theorem \ref{thm:multiplicative_support_1}.

\begin{thm}\label{thm: cyclotomic_support_problem}
Let $K$ be a number field, let $S$ be a finite set of maximal ideals of $\mathcal{O}_K$, and fix $N_0 \in \mathbb{N}$. If $a,b \in \mathcal{O}_{K,S}\backslash\{0\}$ satisfy that for all $n \in \mathbb{N}$ with $n > N_0$, every prime ideal of $\mathcal{O}_{K,S}$ dividing $\Psi_n(a)$ also divides $\Psi_n(b)$, then either $a$ and $b$ are roots of unity of the same order or $b = a^{\pm 1}$.
\end{thm}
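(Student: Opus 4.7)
The plan is to first translate the cyclotomic hypothesis into the multiplicative hypothesis of Theorem \ref{thm:multiplicative_support_1} via Lemma \ref{lem:order_modulo_prime}, obtaining $b=a^{k_{0}}$ for some $k_{0}\in\mathbb{Z}$, and then to exploit the cyclotomic hypothesis once more, together with a primitive prime divisor theorem, in order to pin down $k_{0}$. For the reduction step, I would show that for every $n\in\mathbb{N}$, every prime $\mathfrak{p}$ of $\mathcal{O}_{K,S}$ dividing $a^{n}-1$ also divides $b^{n}-1$. Let $p$ be the residue characteristic of $\mathfrak{p}$ and $k$ the order of $a$ modulo $\mathfrak{p}$; then $k\mid n$, $p\nmid k$ (as $k\mid N(\mathfrak{p})-1$), and $a\notin\mathfrak{p}$. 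Pick $\ell\ge 0$ with $kp^{\ell}>N_{0}$. By $(1)\Rightarrow(3)$ of Lemma \ref{lem:order_modulo_prime}, $\Psi_{kp^{\ell}}(a)\in\mathfrak{p}$; the hypothesis then yields $\Psi_{kp^{\ell}}(b)\in\mathfrak{p}$, which in turn forces $b$ to be a unit modulo $\mathfrak{p}$ (since $\Psi_{kp^{\ell}}(b)\mid b^{kp^{\ell}}-1$), and $(2)\Rightarrow(1)$ of Lemma \ref{lem:order_modulo_prime} now gives that $b$ has order $k$ modulo $\mathfrak{p}$, whence $\mathfrak{p}\mid b^{n}-1$. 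Theorem \ref{thm:multiplicative_support_1} then yields $b=a^{k_{0}}$ for some $k_{0}\in\mathbb{Z}$.

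If $a$ is a root of unity of order $m$, I take any prime $\mathfrak{p}$ of $\mathcal{O}_{K,S}$ whose residue characteristic is coprime to $m$ (infinitely many exist); then $a$ has exact order $m$ modulo $\mathfrak{p}$, and re-running the above argument with $k=m$ shows that $b$ also has order $m$ modulo $\mathfrak{p}$. But $b=a^{k_{0}}$ has order $m/\gcd(m,k_{0})$ modulo $\mathfrak{p}$, so $\gcd(m,k_{0})=1$ and $b$ is a primitive $m$-th root of unity globally. This gives the first alternative of the theorem.

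If $a$ is not a root of unity, I aim to show $k_{0}=\pm 1$ by invoking Schinzel's primitive divisor theorem (reducing to $\mathcal{O}_{K}$ by writing $a=\alpha/\beta$ with $\alpha,\beta\in\mathcal{O}_{K}$ and applying the classical result to $\alpha^{n}-\beta^{n}$): for all sufficiently large $n$, the element $a^{n}-1$ admits a prime divisor $\mathfrak{p}\notin S$ for which $a$ has exact order $n$ modulo $\mathfrak{p}$. Suppose for contradiction that $|k_{0}|\ge 2$, and let $q$ be a prime divisor of $k_{0}$. For $s$ large enough, $n=q^{s}>N_{0}$ and a primitive divisor $\mathfrak{p}$ as above exists; its residue characteristic cannot equal $q$, since $q^{s}\mid N(\mathfrak{p})-1$, which is coprime to $p$. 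Lemma \ref{lem:order_modulo_prime} and the cyclotomic hypothesis then yield $\Psi_{q^{s}}(b)\in\mathfrak{p}$, so $b$ has order $q^{s}$ modulo $\mathfrak{p}$, contradicting the fact that $b=a^{k_{0}}$ has order $q^{s}/\gcd(q^{s},k_{0})\le q^{s-1}$ modulo $\mathfrak{p}$. The remaining case $k_{0}=0$ (i.e., $b=1$) is ruled out analogously: for $n>N_{0}$ not a prime power, $\Psi_{n}(b)=\Psi_{n}(1)=1$, so the hypothesis forces $\Psi_{n}(a)\in\mathcal{O}_{K,S}^{\times}$, contradicting the existence of a primitive prime divisor of $a^{n}-1$ outside $S$ (any such divisor necessarily divides $\Psi_{n}(a)$).

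The main obstacle is the last paragraph's reliance on Schinzel's non-trivial primitive divisor theorem for algebraic integers; once this ingredient is available, everything else is a routine combination of Lemma \ref{lem:order_modulo_prime}, Theorem \ref{thm:multiplicative_support_1}, and elementary divisibility considerations. The overall structure is a two-step cascade: the cyclotomic hypothesis first collapses to the multiplicative one (Step 1), and is then re-applied (Step 2) to refine the exponent $k_{0}$ provided by Corrales-Rodrigáñez--Schoof.
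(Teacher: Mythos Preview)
Your argument is correct, but it takes a different route from the paper's in the crucial step of pinning down the exponent. Both proofs begin in the same way: use Lemma~\ref{lem:order_modulo_prime} to pass from the cyclotomic hypothesis to the multiplicative one and invoke Theorem~\ref{thm:multiplicative_support_1} to obtain $b=a^{k_0}$. The divergence is in how one forces $k_0\in\{\pm 1\}$ when $a$ is not a root of unity. You bring in Schinzel's primitive divisor theorem as an external input: for large $n=q^s$ with $q\mid k_0$, a primitive prime of $a^n-1$ outside $S$ gives a place where $a$ has exact order $q^s$, whence (via the hypothesis and Lemma~\ref{lem:order_modulo_prime}) $b$ also has order $q^s$, contradicting $q\mid k_0$. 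The paper instead observes that, after enlarging $S$ so that $a\in\mathcal{O}_{K,S}^\ast$ and $\prod_{i\le N_0}\Psi_i(a)\in\mathcal{O}_{K,S}^\ast$, the cyclotomic support property becomes \emph{symmetric}: if $\mathfrak{p}\mid\Psi_n(b)$ then also $\mathfrak{p}\mid\Psi_n(a)$ (because $a$, being a unit, has some order $m$ modulo $\mathfrak{p}$, so $\mathfrak{p}\mid\Psi_m(a)$, hence $\mathfrak{p}\mid\Psi_m(b)$, forcing $m$ to equal the $p'$-part of $n$). One then applies Theorem~\ref{thm:multiplicative_support_1} in \emph{both} directions to get $b=a^k$ and $a=b^r$, whence $kr=1$ and $k=\pm 1$. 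This is shorter and entirely self-contained within the tools already on the table; your route works but imports a non-trivial theorem the paper manages to avoid. The paper also handles the root-of-unity case more directly, deducing $\Psi_k(b)=0$ for $k$ the order of $a$ by varying $\mathfrak{p}$, without ever passing through Theorem~\ref{thm:multiplicative_support_1}.
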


The conclusion of Theorem \ref{thm: cyclotomic_support_problem} is best possible: if $a$ and $b$ are roots of unity of the same order $k$ and $\mathfrak{p}$ is a maximal ideal of $\mathcal{O}_{K,S}$ of residue characteristic $p$, then Lemma \ref{lem:order_modulo_prime} implies that $a$ and $b$ both have order $k_0$ modulo $\mathfrak{p}$ where $p$ does not divide $k_0$ and $k = k_0p^{\ell_0}$ for some $\ell_0 \in \mathbb{Z}_{\geq 0}$. Hence, Lemma \ref{lem:order_modulo_prime} implies that for $n \in \mathbb{N}$, $\Psi_n(a) \in \mathfrak{p}$ if and only if $n = k_0p^{\ell}$ for some $\ell \in \mathbb{Z}_{\geq 0}$ if and only if $\Psi_n(b) \in \mathfrak{p}$. So the hypothesis of the support problem holds. The same is true if $b = a^{-1} \in \mathcal{O}^{\ast}_{K,S}$ since $a$ and $a^{-1}$ have the same order modulo every maximal ideal of $\mathcal{O}_{K,S}$.

\begin{proof}
If $a$ is a root of unity, $k$ is its order, and $\mathfrak{p}$ is a maximal ideal of $\mathcal{O}_{K,S}$ of residue characteristic $p$, then $\Psi_{kp^{\ell}}(a) \in \mathfrak{p}$ for all $\ell \in \mathbb{Z}_{\geq 0}$ by Lemma \ref{lem:order_modulo_prime}. Choosing $\ell$ large enough and applying our hypothesis on $a$ and $b$, we deduce that $\Psi_{kp^{\ell}}(b) \in \mathfrak{p}$ for some $\ell \in \mathbb{Z}_{\geq 0}$ and so, again by Lemma \ref{lem:order_modulo_prime}, $\Psi_k(b) \in \mathfrak{p}$. Varying $\mathfrak{p}$, we deduce that $\Psi_k(b) = 0$ and we are done.

If $a$ is not a root of unity, then, after enlarging $S$ if necessary, we can assume without loss of generality that $a \in \mathcal{O}_{K,S}^{\ast}$ and that  $\prod_{i=1}^{N_0}{\Psi_i(a)} \in \mathcal{O}^{\ast}_{K,S}$ so that the support property holds for all $n \in \mathbb{N}$. Observe now that such property also holds in the other direction: if $\mathfrak{p}$ is a prime ideal of $\mathcal{O}_{K,S}$ dividing $\Psi_n(b)$ for some $n \in \mathbb{N}$, then $\mathfrak{p}$ also divides $\Psi_n(a)$. Indeed, let $p$ be the residue characteristic of $\mathfrak{p}$ and let $m$ be the order of $a$ modulo $\mathfrak{p}$, which is well-defined since $a \not\in \mathfrak{p}$. It follows that $\mathfrak{p} \mid \Psi_m(a)$ and the support property implies that $\mathfrak{p} \mid \Psi_m(b)$. By Lemma \ref{lem:order_modulo_prime}, $b$ has order $m$ modulo $\mathfrak{p}$. On the other hand, since we assumed that $\mathfrak{p} \mid \Psi_n(b)$, writing $n=n_0 p^\ell$ with $p\nmid n_0$, we must have $m=n_0$ by Lemma \ref{lem:order_modulo_prime}. Using once more Lemma \ref{lem:order_modulo_prime}, we deduce that $\mathfrak{p} \mid \Psi_{n_0 p^\ell}(a)=\Psi_{n}(a)$, as wanted.

We conclude that for all $n \in \mathbb{N}$, a prime ideal of $\mathcal{O}_{K,S}$ divides $a^n-1 = \prod_{i \mid n}{\Psi_i(a)}$ if and only if it divides $\prod_{i \mid n}{\Psi_i(b)} = b^n-1$. Thus, Theorem \ref{thm:multiplicative_support_1} implies that $b = a^k$ and $a=b^r$ for some $k,r \in \mathbb{Z}$. Hence we have $a^{|kr-1|}=1$ and, since by assumption $a$ is not a root of unity, this yields $kr = 1$, so $k \in \{ \pm 1 \}$. The proof is concluded.
\end{proof}

The next theorem resolves the function field case of the multiplicative and the cyclotomic support problem in characteristic $0$.

\begin{thm}\label{thm:g_m_multiplicative_support_problem_function_field_char_0}
Let $R$ be the coordinate ring of a smooth affine irreducible curve $\mathcal{C}$ over an algebraically closed field $F$ of characteristic $0$. Let $A, B \in R \backslash F$ and let $N_0 \in \mathbb{N}$. The following hold:
\begin{enumerate}
    \item Suppose that for all $n \in \mathbb{N}$ with $n > N_0$, every prime ideal of $R$ that divides $A^n-1$ also divides $B^n-1$. Then $B = A^k$ for some $k \in \mathbb{Z}\backslash\{0\}$.
    \item Suppose that for all $n \in \mathbb{N}$ with $n > N_0$, every prime ideal of $R$ that divides $\Psi_n(A)$ also divides $\Psi_n(B)$. Then $B = A^{\pm 1}$.
\end{enumerate}
\end{thm}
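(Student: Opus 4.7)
For Part (1), my plan is to follow the strategy used in the proof of Theorem \ref{thm: gcd in function fields}, with the theorem of Ihara--Serre--Tate on torsion points on curves in $\mathbb{G}_m^2$ playing the role there played by Andr\'e's theorem. I consider the morphism $\varphi = (A,B) \colon \mathcal{C} \to \mathbb{A}_F^2$ and let $\mathcal{C}' \subseteq \mathbb{A}_F^2$ denote the Zariski closure of its image; this is an irreducible curve because $A \notin F$. The support condition translates as: for every $n > N_0$ and every $Q \in \mathcal{C}(F)$ with $A(Q)^n = 1$, one also has $B(Q)^n = 1$. Since $A$ is non-constant, all but finitely many roots of unity $\omega \in F$ lie in the image of $A$; for each such $\omega$, picking $Q_\omega \in \mathcal{C}(F)$ with $A(Q_\omega) = \omega$ and applying the support condition with $n$ a sufficiently large multiple of the order of $\omega$ forces $B(Q_\omega)$ also to be a root of unity. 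Hence $\mathcal{C}' \cap \mathbb{G}_{m,F}^2$ contains infinitely many torsion points, and Ihara--Serre--Tate yields coprime non-zero integers $a, b$ and a root of unity $\zeta$ with $A^a B^b = \zeta$ in $R$.

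Upgrading this to $B = A^k$ requires $\zeta = 1$ and $|b| = 1$. The first is easy: at $Q_\omega$ with $\omega$ of order $n > N_0$, the identity $A^a B^b = \zeta$ and $A(Q_\omega)^n = 1$ give $B(Q_\omega)^{bn} = \zeta^n$, while $B(Q_\omega)^n = 1$ forces $\zeta^n = 1$; letting $n$ range over the orders of the cofinitely many roots of unity in the image of $A$ yields $\zeta = 1$. The $|b| = 1$ step is the main obstacle and the technical heart of the argument. It relies on the fact that the dominant morphism $\mathcal{C} \to \mathcal{C}'$ of integral curves hits all but finitely many $F$-points of $\mathcal{C}'$: therefore for all but finitely many $\omega$ of order $n$, each of the $|b|$ points $(\omega, \eta) \in \mathcal{C}'(F)$ with $\eta^b = \omega^{-a}$ is realized as $\varphi(Q)$ for some $Q \in \mathcal{C}(F)$. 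The support condition then forces each such $\eta$ to be an $n$-th root of unity; since these $|b|$ values form a coset of $\mu_{|b|}$, one deduces $\mu_{|b|} \subseteq \mu_n$, i.e., $|b| \mid n$. For this to hold for arbitrarily large $n$ we must have $|b| = 1$, and then $B = A^{\mp a}$ with $\mp a \in \mathbb{Z}\setminus\{0\}$.

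For Part (2), I will follow the template of the proof of Theorem \ref{thm: cyclotomic_support_problem}, which is cleaner here because the residue fields of $R$ all have characteristic zero and so cyclotomic polynomials remain separable modulo every prime. First I enlarge $S$ to contain the finitely many primes of $R$ dividing $A$ or dividing $\prod_{i \leq N_0} \Psi_i(A)$; then the $\Psi$-support hypothesis holds for every $n \geq 1$ in the localization $R_S$, and $A \in R_S^{\ast}$. Since every residue field of $R_S$ has characteristic $0$ and $\Psi_n(0) = \pm 1$ is a unit, $\mathfrak{p} \mid \Psi_n(c)$ is equivalent to $c \notin \mathfrak{p}$ and $c$ having order exactly $n$ modulo $\mathfrak{p}$. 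From this the $\Psi$-support upgrades to the $T^n-1$ support: if $\mathfrak{p} \mid A^n - 1$ and $d$ denotes the order of $A$ modulo $\mathfrak{p}$, then $d \mid n$ and $\mathfrak{p} \mid \Psi_d(A)$, so $\mathfrak{p} \mid \Psi_d(B)$, whence $B$ has order $d \mid n$ modulo $\mathfrak{p}$ and $\mathfrak{p} \mid B^n - 1$. Applying Part (1) to $(R_S, A, B)$ produces $k \in \mathbb{Z}\setminus\{0\}$ with $B = A^k$. To force $k = \pm 1$, I note that for each sufficiently large $m > N_0$ the image of $A$ contains a primitive $m$-th root of unity, giving a prime $\mathfrak{p}_m$ of $R_S$ at which $A$ has order $m$; the $\Psi$-support then forces $B$ to have order $m$ modulo $\mathfrak{p}_m$, but the relation $B = A^k$ gives the order of $B$ as $m/\gcd(m,k)$, so $\gcd(m,k) = 1$ for all large $m$, whence $k = \pm 1$. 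The identity $B = A^{\pm 1}$ descends from $R_S$ to $R$ (with $A, B \in R^{\ast}$ when $k = -1$, forced by $AB = 1$), completing the proof.
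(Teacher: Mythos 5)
Your argument is correct, and its first half coincides with the paper's: both use the cofiniteness of the image of $A$ to specialize at roots of unity, produce infinitely many torsion points on the curve $\mathcal{C}'$, and invoke Ihara--Serre--Tate to obtain a relation $A^aB^b=\zeta$ with $a,b$ coprime and non-zero. The endgame, however, is genuinely different. The paper first forces $|b|=1$ by specializing $A$ at a root of unity $\zeta_0$ chosen so that $\eta\zeta_0^{-a}$ has order $|b|rp$ for a large prime $p$ (comparing the order of the resulting value of $B$ with the order allowed by the support hypothesis), then kills the root of unity, and finally, for the cyclotomic case, adds one more specialization at a root of unity of order $p|k|$; parts (1) and (2) are handled simultaneously throughout. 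You instead first kill $\zeta$ by a direct power computation, then force $|b|=1$ by exploiting that $\mathcal{C}\to\mathcal{C}'$ misses only finitely many points, so that \emph{all} $|b|$ points of the fibre of $\mathcal{C}'$ over a suitable $\omega$ are hit, giving $\mu_{|b|}\subseteq\mu_n$; and you derive part (2) from part (1) by localizing and translating $\mathfrak{p}\mid\Psi_n(c)$ into ``$c$ has order exactly $n$ modulo $\mathfrak{p}$'' (mirroring the paper's own reduction in the number-field case, Theorem \ref{thm: cyclotomic_support_problem} via Lemma \ref{lem:order_modulo_prime}, which is indeed cleaner here since all residue fields have characteristic $0$). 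Your route buys a cleaner logical structure ((2) as a corollary of (1)) and avoids the delicate bookkeeping of orders of the auxiliary roots of unity, at the cost of needing the full-fibre surjectivity observation. One small wording repair: ``$|b|\mid n$ for arbitrarily large $n$'' does not by itself give $|b|=1$; but your setup actually yields $|b|\mid n$ for \emph{all} sufficiently large $n$ (since $\varphi(n)\to\infty$, every large $n$ admits a primitive $n$-th root of unity avoiding the finitely many excluded values), and taking two consecutive large $n$ finishes the step, so this is only a matter of phrasing, not a gap.
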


The case where $A \in F$ or $B \in F$ is uninteresting, see Example \ref{ex:support_1}. Note that the set $Z$ in Example \ref{ex:support_1}~(2) is in both cases infinite for all $B \in F\backslash\{1\}$ (and in case (2) even for all $B \in F$).

\begin{proof}
Throughout the proof, we treat both cases simultaneously until the very end, where case $(2)$ receives some extra attention. The tuple $(A,B)$ defines a rational map $\varphi: \mathcal{C} \dashrightarrow \mathbb{G}_{m,F}^2$. Let $\mathcal{C}'$ denote the Zariski closure of the image of $\varphi$. Since $A$ is non-constant, $\mathcal{C}'$ is a curve and $\varphi$ has finite fibers. Thanks to \cite[Proposition 15.4~(1)]{GoertzWedhorn}, applied to the morphism $\mathcal{C} \to \mathbb{A}^1_F$ induced by $A$, we can assume, after increasing $N_0$ if necessary, that for every root of unity $\zeta$ of order $n > N_0$ there exists some maximal ideal $\mathfrak{m}_{\zeta}$ of $R$ dividing $A-\zeta$. It then follows by hypothesis that $\mathfrak{m}_{\zeta}$ also divides $B-\zeta'$ for some root of unity $\zeta'$ and so $(\zeta,\zeta') \in \mathcal{C}'(F) \subseteq \mathbb{G}^2_{m,F}(F) \simeq (F^{\ast})^2$. Varying $n$ shows that $\mathcal{C}'$ contains a Zariski dense set of special points. By the theorem of Ihara--Serre--Tate \cite{Lang_1965}, $\mathcal{C}'$ is a special subvariety of $\mathbb{G}^2_{m,F}$ and so $A,B$ are multiplicatively dependent. Since $A,B \not\in F$, it follows that there exist coprime non-zero integers $k, \ell \in \mathbb{Z}$ and a root of unity $\eta$ of order $r$ such that $A^kB^{\ell} = \eta$ holds in the fraction field of $R$.

Let $p$ denote a prime such that $p > \max\{N_0,|k|,|\ell|,r\}$ and let $\zeta$ be a root of unity such that $\xi := \eta\zeta^{-k}$ has order $|\ell| rp$. It follows that the order $m$ of $\zeta$ is divisible by $p$ and divides $|k\ell| rp$. By our assumptions on $N_0$ and $p$, there exists some maximal ideal $\mathfrak{m}_{\zeta}$ of $R$ dividing $A-\zeta$.

It then follows from the above construction that $\mathfrak{m}_{\zeta}$ also divides $(B^{|\ell|}-\xi)(B^{|\ell|}-\xi^{-1})$ and so divides $B-\xi'$ for some root of unity $\xi'$ of order $\ell^2 rp$. At the same time, $\mathfrak{m}_{\zeta}$ also divides $B^{m}-1$ by hypothesis. Since $m$ divides $|k\ell| rp$, $\mathfrak{m}_{\zeta}$ divides $B^{|k\ell| rp}-1$ and so divides $B-\zeta'$ for some root of unity $\zeta'$ of order dividing $|k \ell| rp$. It follows that $\zeta' = \xi'$, so $\ell$ divides $k$ and $\ell \in \{\pm1\}$ since $k$ and $\ell$ are coprime.

After taking the reciprocals of both sides of the equation $A^kB^{\ell} = \eta$ and replacing $(k,\eta)$ by $(-k,\eta^{-1})$ if necessary, we can assume without loss of generality that $\ell = -1$, \emph{i.e.}, that $B = \eta^{-1} A^k$ in the fraction field of $R$. Let again $p$ denote a prime such that $p > \max\{N_0,|k|,r\}$ and let $\zeta$ be a root of unity of order $p$. By our assumption on $N_0$, there exists some maximal ideal $\mathfrak{m}_{\zeta}$ of $R$ dividing $A-\zeta$. It follows that $\mathfrak{m}_{\zeta}$ divides $B-\eta^{-1}\zeta^k$ while also dividing $B^p-1$ by hypothesis. The order of $\eta^{-1}\zeta^k$ is $rp$ and so $r = 1$. Hence we have $\eta=1$ and we deduce that $B = A^k$.

In case ($1$), we are now done. In case ($2$), let again $p$ denote a prime such that $p > \max\{N_0,|k|\}$ and let $\zeta$ be a root of unity of order $p|k|$. By our assumption on $N_0$, there exists some maximal ideal $\mathfrak{m}_{\zeta}$ of $R$ dividing $A-\zeta$. It follows that $\mathfrak{m}_{\zeta}$ divides $B-\zeta^k$ while also dividing $B-\zeta'$ for some root of unity $\zeta'$ of order $p|k|$ by hypothesis. We deduce that $\zeta' = \zeta^k$, so $p|k| = p$ and $k \in \{\pm 1\}$.
\end{proof}

\section{The modular support problem}\label{sec:modularsupportproblem}

Let $R$ be a Dedekind domain and, for $D \in \mathbb{D}$, recall that $H_D(T)$ denotes the image under the canonical ring homomorphism $\mathbb{Z}[T] \to R[T]$ of the Hilbert class polynomial associated to an imaginary quadratic order of discriminant $D$. The \emph{modular support problem} for $R$ is the support problem for the family of polynomials $H_D(T) \in R[T]$ ($D \in \mathbb{D}$). We will consider the cases where $R$ is either the coordinate ring of a smooth affine irreducible curve over an algebraically closed field of characteristic $0$ or a ring of $S$-integers in some number field.

We start by proving an auxiliary proposition that will be used in both cases. It is inspired by the theory of isogeny volcanoes.

\begin{prop}\label{lem: no horizontal}
Let $F$ be an algebraically closed field. Let $E_{/F}$, $E'_{/F}$ be two elliptic curves with complex multiplication by the same imaginary quadratic order $\mathcal{O}$ and let $\varphi: E \to E'$ be an isogeny with cyclic kernel such that $\deg \varphi$ and $\mathrm{disc}(\mathcal{O})\max\{1,\mathrm{char}(F)\}$ are coprime. Then there does not exist any prime $\ell$ that divides $\deg \varphi$ and is inert in the fraction field of $\mathcal{O}$.
\end{prop}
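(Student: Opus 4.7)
The plan is to proceed by contradiction: suppose some prime $\ell\mid\deg\varphi$ is inert in $K:=\mathrm{Frac}(\mathcal{O})$. I will show that the $\ell$-primary part of $\ker\varphi$ cannot be cyclic, contradicting the hypothesis. The first and main step is to establish that $\ker\varphi$ is stable under the action of $\mathcal{O}=\End(E)$. For this, the isogeny $\varphi$ induces an isomorphism of $\mathbb{Q}$-algebras $\Theta\colon\End^{0}(E)\to\End^{0}(E')$ by $\alpha\mapsto\varphi\alpha\varphi^{-1}$ (computed in the isogeny category). Via the CM identifications both algebras are identified with $K$, so $\Theta$ is an automorphism of $K$, hence either the identity or complex conjugation. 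Writing $\mathcal{O}=\mathbb{Z}+f\mathcal{O}_K$ for the conductor $f$ of $\mathcal{O}$ in the maximal order $\mathcal{O}_K$, one sees that $\mathcal{O}$ is stable under $\mathrm{Gal}(K/\mathbb{Q})$, so $\Theta$ restricts to an isomorphism $\End(E)\to\End(E')$ in either case. Consequently, for each $\alpha\in\mathcal{O}$ one has $\varphi\circ\alpha=\Theta(\alpha)\circ\varphi$ with $\Theta(\alpha)\in\End(E')$ an honest endomorphism, which forces $\alpha(\ker\varphi)\subseteq\ker\varphi$.

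Next I would analyse the local structure at $\ell$. Since $\ell\nmid\mathrm{disc}(\mathcal{O})$, the completion $\mathcal{O}_\ell:=\mathcal{O}\otimes_{\mathbb{Z}}\mathbb{Z}_\ell$ coincides with the full ring of integers of $K_\ell:=K\otimes_{\mathbb{Q}}\mathbb{Q}_\ell$, which by the inert assumption is the unramified quadratic extension of $\mathbb{Q}_\ell$; hence $\mathcal{O}_\ell$ is a DVR with uniformizer $\ell$ and residue field $\mathbb{F}_{\ell^{2}}$. Because $\ell\neq\mathrm{char}(F)$, the Tate module $T_\ell(E)$ is free of $\mathbb{Z}_\ell$-rank $2$; as a torsion-free module over the DVR $\mathcal{O}_\ell$ (which itself has $\mathbb{Z}_\ell$-rank $2$), it is forced to be free of rank $1$ over $\mathcal{O}_\ell$. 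Tensoring with $\mathbb{Q}_\ell/\mathbb{Z}_\ell$ yields $E[\ell^{\infty}]\simeq K_\ell/\mathcal{O}_\ell$ as $\mathcal{O}_\ell$-modules.

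To conclude, by the $\mathcal{O}$-stability from the first step the $\ell$-primary part $C_\ell$ of $\ker\varphi$ is a finite $\mathcal{O}_\ell$-submodule of $K_\ell/\mathcal{O}_\ell$, so it has the form $\ell^{-k}\mathcal{O}_\ell/\mathcal{O}_\ell\simeq\mathcal{O}_\ell/\ell^{k}\mathcal{O}_\ell\simeq(\mathbb{Z}/\ell^{k}\mathbb{Z})^{2}$ as an abelian group (the last isomorphism because $\mathbb{F}_{\ell^{2}}$ has $\mathbb{F}_\ell$-dimension $2$). Since $\ell\mid\deg\varphi=|\ker\varphi|$ (separability of $\varphi$ is automatic as $\gcd(\deg\varphi,\mathrm{char}(F))=1$), we must have $k\geq 1$, so $C_\ell$ is not cyclic, contradicting the cyclicity of $\ker\varphi$. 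I expect the main conceptual hurdle to be the first step: one must realize that the bare hypothesis $\End(E)\simeq\End(E')\simeq\mathcal{O}$ already forces $\ker\varphi$ to be $\mathcal{O}$-stable, even though $\varphi$ is not assumed to commute with the CM action a priori; once this is in hand, the rest is a routine Tate-module computation.
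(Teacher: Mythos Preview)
Your proof is correct. The crucial first step --- that $\ker\varphi$ is stable under $\mathcal{O}=\End_F(E)$ --- is the same insight as in the paper, though the paper argues more concretely: rather than passing to the isogeny category, it writes $(\deg\varphi)(\varphi\circ\beta)=\beta''\circ\varphi$ via the dual isogeny and uses the minimal polynomial of $(\deg\varphi)\beta$ to pin down $\beta''$ as $\deg\varphi$ times either $\iota(\beta)$ or its conjugate. Both routes arrive at $\varphi\circ\beta=\beta'\circ\varphi$ with $\beta'\in\End_F(E')$, hence $\beta(\ker\varphi)\subseteq\ker\varphi$.

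Where the two proofs genuinely diverge is the endgame. You localize at $\ell$ and exploit that $T_\ell(E)$ is free of rank $1$ over the DVR $\mathcal{O}_\ell$, so every finite $\mathcal{O}_\ell$-submodule of $E[\ell^\infty]$ is of the form $(\mathbb{Z}/\ell^k\mathbb{Z})^2$; this is short and structural. The paper instead stays global and ideal-theoretic: it forms the annihilator $\mathfrak{a}=\{\alpha\in\End_F(E):\alpha(P)=0\}$ of a generator $P$ of $\ker\varphi$, shows $[\End_F(E):\mathfrak{a}]=\deg\varphi$ (so $\mathfrak{a}$ is invertible, being prime to the conductor), deduces from $\ell$ inert that $\mathfrak{a}=\ell\mathfrak{b}$ with $[\End_F(E):\mathfrak{b}]=\ell^{-2}\deg\varphi$, and then factors $\varphi=\xi\circ\psi$ through the quotient by $\langle\ell P\rangle$ to force $|\ker\xi|\geq\ell^2$ while $\ker\xi\simeq\mathbb{Z}/\ell\mathbb{Z}$. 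Your Tate-module computation is cleaner; the paper's argument is more elementary in that it avoids completions and the structure of $T_\ell(E)$, working entirely with ideals of $\mathcal{O}$ and explicit isogeny factorizations.
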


\begin{proof}
Since $\gcd(\deg \varphi,\max\{1,\mathrm{char}(F)\}) = 1$, the isogeny $\varphi$ is separable. Let $P \in E(F)$ be a generator of $\ker \varphi$. Fix an abstract isomorphism $\iota: \End_{F}(E) \to \End_{F}(E')$ and set $\mathfrak{a} = \{\alpha \in \End_{F}(E); \alpha(P) = 0\}$. Then $\mathfrak{a}$ is an ideal of $\End_{F}(E)$ and we begin by showing that $[\End_{F}(E):\mathfrak{a}] = \deg \varphi$.

We have $[\End_{F}(E):\mathfrak{a}] = |\{\beta(P); \beta \in \End_{F}(E)\}|$. Let $\sigma: \End_{F}(E) \to \End_{F}(E)$ denote the unique non-trivial ring automorphism of $\End_{F}(E)$. One can check that for every endomorphism $\beta \in \End_{F}(E)$, there exists $\beta' \in \{\iota(\beta),\iota(\sigma(\beta))\}$ such that $\varphi \circ \beta = \beta' \circ \varphi$. Indeed, since $\varphi$ is separable, it follows from \cite[III, Corollary 4.11]{Silverman_book_2009} that there exists $\beta'' \in \End_{F}(E')$ such that $(\deg \varphi)(\varphi \circ \beta) = \beta'' \circ \varphi$. Hence, $Q(\beta'') \circ \varphi = \varphi \circ Q((\deg \varphi)\beta) = 0$, where $Q$ denotes the minimal polynomial of $(\deg \varphi)\beta$ in $\mathbb{Z}[T]$, so $Q(\beta'') = 0$ and $\beta'' \in \{(\deg \varphi)\iota(\beta),(\deg \varphi)\iota(\sigma(\beta))\}$. It follows that $\varphi \circ \beta = \beta' \circ \varphi$ for some $\beta' \in \{\iota(\beta),\iota(\sigma(\beta))\}$ as desired.

This implies that $\varphi(\beta(P)) = \beta'(\varphi(P)) = 0$, so $\beta(P) \in \ker \varphi$. Since $\ker \varphi = \mathbb{Z} \cdot P$, we deduce that $|\{\beta(P); \beta \in \End_{F}(E)\}| = \deg \varphi$, so $[\End_{F}(E):\mathfrak{a}] = \deg \varphi$. Since $\gcd(\deg \varphi,\mathrm{disc}(\mathcal{O})) = 1$, this implies that $\mathfrak{a}$ is invertible (see \cite[Proposition 7.4 and Lemma 7.18]{Cox_book_2013}).

Aiming for a contradiction, we now assume that there exists a prime $\ell$ that divides $\deg \varphi$ and is inert in the fraction field of $\mathcal{O}$. Since $\gcd(\deg \varphi,\mathrm{disc}(\mathcal{O})) = 1$, it follows from \cite[Proposition 7.20]{Cox_book_2013} that there exists an ideal $\mathfrak{b}$ of $\End_{F}(E)$ such that $\mathfrak{a} = \ell\mathfrak{b}$ and
\[ \deg \varphi = [\End_{F}(E):\mathfrak{a}] = \ell^2[\End_{F}(E):\mathfrak{b}].\]

Hence, $\mathfrak{b} = \{\alpha \in \End_{F}(E); \ell\alpha \in \mathfrak{a}\} = \{\alpha \in \End_{F}(E); \alpha(\ell P) = 0\}$. By \cite[III, Proposition 4.12]{Silverman_book_2009}, there exist an elliptic curve $E''_{/F}$ and a separable isogeny $\psi: E \to E''$ whose kernel is generated by $\ell P$. We have
\[\deg \psi = |\mathbb{Z} \cdot \ell P| \leq |\{\beta(\ell P); \beta \in \End_{F}(E)\}| = [\End_{F}(E):\mathfrak{b}] = \ell^{-2} \deg \varphi\]
or equivalently $(\deg \varphi)/(\deg \psi) \geq \ell^2$.

Since $\ker \psi \subseteq \ker \varphi$ and $\psi$ is separable, \cite[III, Corollary 4.11]{Silverman_book_2009} implies that there exists an isogeny $\xi: E'' \to E'$ such that $\varphi = \xi \circ \psi$. We have $\deg \varphi = (\deg \xi)(\deg \psi)$, so in particular, $\deg \xi$ and $\max\{1,\mathrm{char}(F)\}$ are coprime and $\xi$ is  therefore separable. By \cite[III, Theorem 4.10~(c)]{Silverman_book_2009} together with the above, we have $|\ker \xi| = \deg \xi = (\deg \varphi)/(\deg \psi) \geq \ell^2$. But $\ker \xi = \psi(\ker \varphi) \simeq (\ker \varphi)/(\ker \psi) \simeq \mathbb{Z}/\ell \mathbb{Z}$ and we get the desired contradiction.
\end{proof}

\subsection{The modular support problem over function fields of characteristic $0$}\label{sec:modularsupportproblemfunctionfield}

In this section, we consider the case where $R$ is the coordinate ring of a smooth affine irreducible curve over an algebraically closed field $F$ of characteristic $0$. Since the Hilbert class polynomials are irreducible over $\mathbb{Q}$, monic, and pairwise distinct, it suffices, thanks to Example \ref{ex:support_1}, to consider the problem for $A,B \in R\backslash F$. The following theorem gives a complete solution to the modular support problem in this setting.

\begin{thm} \label{thm: support problem in function fields}
Let $R$ be the coordinate ring of a smooth affine irreducible curve $\mathcal{C}$ over an algebraically closed field $F$ of characteristic $0$. Let $A, B \in R \backslash F$ and suppose that there exists $D_0 \in \mathbb{N}$ such that for all discriminants $D \in \mathbb{D}$ with $|D|>D_0$ every prime ideal of $R$ that divides $H_D(A)$ also divides $H_D(B)$. Then $A=B$.
\end{thm}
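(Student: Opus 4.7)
The plan is to split the argument into two cases according to whether $\Phi_N(A,B)$ vanishes identically for some $N \in \mathbb{N}$, and to show that in each case the support hypothesis forces $A = B$.

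First, I would assume for contradiction that $\Phi_N(A,B) \neq 0$ for every $N \in \mathbb{N}$. Then Theorem~\ref{thm: gcd in function fields} produces a non-zero ideal $J \subseteq R$ with $\gcd(H_{D_1}(A),H_{D_2}(B)) \mid J$ for all $D_1, D_2 \in \mathbb{D}$; the alternative $H_{D_1}(A) H_{D_2}(B) = 0$ is excluded because $A, B \in R \setminus F$ are non-constant while singular moduli lie in $F$. Combined with the support hypothesis, this shows that for every $D \in \mathbb{D}$ with $|D| > D_0$, every maximal ideal of $R$ dividing $H_D(A)$ lies in the finite set of prime divisors of $J$. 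On the other hand, the morphism $\mathcal{C} \to \mathbb{A}^1_F$ induced by $A$ is non-constant, so it extends to a surjective morphism of smooth projective curves $\overline{\mathcal{C}} \to \mathbb{P}^1_F$; hence $A(\mathcal{C})$ is cofinite in $\mathbb{A}^1_F$. Therefore all but finitely many singular moduli belong to $A(\mathcal{C})$, and each singular modulus $\sigma$ of discriminant $D$ with $|D| > D_0$ that lies in $A(\mathcal{C})$ yields a point $Q \in \mathcal{C}(F)$ with $A(Q) = \sigma$, so that the corresponding maximal ideal $\mathfrak{m}_Q$ divides $H_D(A)$. Varying $\sigma$ and $D$ produces infinitely many such maximal ideals, contradicting the previous conclusion.

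Thus there exists $N \in \mathbb{N}$ with $\Phi_N(A, B) = 0$, and since $\Phi_1(X, Y) = X - Y$, it suffices to rule out $N > 1$. Assume $N > 1$ and pick a prime $\ell \mid N$. By Dirichlet's theorem there exist infinitely many $D \in \mathbb{D}$ with $|D|$ arbitrarily large, $\gcd(D,N) = 1$, and $\ell$ inert in $\mathbb{Q}(\sqrt{D})$: for odd $\ell$ one requires $D$ to be a quadratic non-residue modulo $\ell$, and for $\ell = 2$ one requires $D \equiv 5 \pmod{8}$. Choose such a $D$ with $|D| > D_0$ large enough that $A(\mathcal{C})$ contains at least one singular modulus $\sigma$ of discriminant $D$, and fix $Q \in \mathcal{C}(F)$ with $A(Q) = \sigma$. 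The support hypothesis forces $H_D(B(Q)) = 0$, so $B(Q)$ is a singular modulus of discriminant $D$ as well. If $E, E'$ are elliptic curves over $F$ with $j(E) = A(Q)$ and $j(E') = B(Q)$, then both have complex multiplication by the imaginary quadratic order $\mathcal{O}$ of discriminant $D$, and the relation $\Phi_N(A(Q), B(Q)) = 0$ provides a cyclic $N$-isogeny $\varphi \colon E \to E'$. Since $\mathrm{char}(F) = 0$ and $\gcd(\deg \varphi, \mathrm{disc}(\mathcal{O})) = \gcd(N, D) = 1$, Proposition~\ref{lem: no horizontal} implies that no prime dividing $N$ is inert in $\mathbb{Q}(\sqrt{D})$, contradicting the choice of $\ell$. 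Hence $N = 1$ and $A = B$.

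I expect the main obstacle to lie in the second case. The first case is a fairly direct consequence of Theorem~\ref{thm: gcd in function fields}, whereas handling $\Phi_N(A,B) = 0$ requires combining the isogeny-theoretic input of Proposition~\ref{lem: no horizontal} with an arithmetic selection of a discriminant $D$ that is simultaneously coprime to $N$, inert at some chosen prime divisor of $N$, and large enough so that one of its associated singular moduli lies in the cofinite image $A(\mathcal{C})$.
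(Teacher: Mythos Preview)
Your proposal is correct and follows essentially the same approach as the paper: the same two-case split on whether $\Phi_N(A,B)$ vanishes, the same use of Theorem~\ref{thm: gcd in function fields} to derive a contradiction in the first case via the cofiniteness of $A(\mathcal{C})$, and the same combination of Dirichlet's theorem with Proposition~\ref{lem: no horizontal} to rule out $N>1$ in the second case. The only cosmetic differences are that the paper insists on a fundamental discriminant (which is unnecessary) and phrases the ``$|D|$ large enough'' step by enlarging $D_0$ rather than by directly selecting $D$.
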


\begin{proof}
We first show that $\Phi_N(A,B)=0$ for some $N \in \mathbb{N}$, where we recall that $\Phi_N$ denotes the $N$-th modular polynomial. Suppose by contradiction that this is not the case. Then Theorem \ref{thm: gcd in function fields} yields a non-zero ideal $J$ of $R$ such that 
\[
\gcd(H_D(A),H_D(B)) \mid J
\]
for all $D \in \mathbb{D}$. By our assumption, every prime ideal of $R$ that divides some $H_D(A)$ ($D\in \mathbb{D}$ with $|D| > D_0$) also divides the ideal $J$. On the other hand, since $H_D(A)$ and $H_{D'}(A)$ are coprime for $D, D' \in \mathbb{D}$ with $D \neq D'$, we deduce from \cite[Proposition 15.4~(1)]{GoertzWedhorn} applied to the morphism $\mathcal{C} \to \mathbb{A}^1_F = Y(1)_F$ induced by $A$ that for infinitely many $D \in \mathbb{D}$, there exists a maximal ideal $\mathfrak{m}_D$ that divides $H_D(A)$. So $J$ must be divisible by infinitely many pairwise distinct maximal ideals, which contradicts the fact that $J \neq (0)$. Hence there exists $N \in \mathbb{N}$ such that $\Phi_N(A,B)=0$.

Our goal is now to show that $N=1$. This would yield the desired result, since $\Phi_1(A,B)=A-B$. Assume then, again by contradiction, that $N>1$ and let $p \in \mathbb{N}$ be a prime factor of $N$.

The morphisms $\psi_A, \psi_B: \mathcal{C} \to Y(1)_F$ induced by $A,B$ are non-constant, so in particular, by \cite[Proposition 15.4~(1)]{GoertzWedhorn}, the set $Y(1)_F\backslash \psi_A(\mathcal{C})$ is finite. By enlarging $D_0$ if necessary, we can assume without loss of generality that $\sigma \in \psi_A(\mathcal{C})$ for every singular modulus $\sigma$ whose discriminant $D$ satifies $|D| > D_0$. By Dirichlet's theorem on primes in arithmetic progressions, there exists a fundamental discriminant $D \in \mathbb{D}$ with $|D|>D_0$ such that $\gcd(D,N) = 1$ and $p$ is inert in $K:=\mathbb{Q}(\sqrt{D}) \subseteq \overline{\mathbb{Q}}$.

By our assumption on $D_0$, there exist an elliptic curve $E_{/F}$ and a point $P \in \mathcal{C}(F)$ such that $\End_F(E) \simeq \mathcal{O}_K$ and $\psi_A(P)=j(E) \in Y(1)(F) \simeq F$ is the $j$-invariant of $E$. Let $\mathfrak{m}_P$ denote the maximal ideal of $R$ corresponding to $P$. Clearly $\mathfrak{m}_P \mid H_D(A)$.

By assumption, $\mathfrak{m}_P$ divides $H_D(B)$ and so $\psi_B(P)$ is the $j$-invariant of an elliptic curve $E'_{/F}$ with complex multiplication by $\mathcal{O}_K$. We know that $\Phi_N(\psi_A(P),\psi_B(P))=0$, so, by \cite[Proposition 14.11]{Cox_book_2013}, there exists an isogeny $\varphi: E \to E'$ of degree $N$ with cyclic kernel. This contradicts Proposition \ref{lem: no horizontal} with $\ell = p$. We conclude that $N=1$ and the theorem follows.
\end{proof}

\subsection{The modular support problem over number fields}\label{sec:modularsupportproblemnumberfield}

In this section, we analyze the modular support problem over a ring of $S$-integers in a number field. In all but finitely many exceptional cases, we will be able to show that the conclusion of Theorem \ref{thm: support problem in function fields} stays true. However, the proof is much more involved. We first prove an auxiliary lemma that is a modular analogue of Lemma \ref{lem:order_modulo_prime}.

\begin{lem}\label{lem:CM_modulo_prime}
Let $K$ be a number field, let $S$ be a finite set of maximal ideals of $\mathcal{O}_K$, and let $\mathfrak{p}$ denote a maximal ideal of $\mathcal{O}_{K,S}$ of residue characteristic $p$. The following are equivalent for $a \in \mathcal{O}_{K,S}$ and a discriminant $D \in \mathbb{D}$:
\begin{enumerate}
    \item there exist a finite field extension $K \subseteq L$, a singular modulus $j \in L$ of discriminant $D$, and a prime $\mathfrak{P}$ of $L$ lying above $\mathfrak{p}$ such that $a \equiv j \mbox{ \emph{mod} } \mathfrak{P}$,
    \item $H_{Dp^{2\ell}}(a) \in \mathfrak{p}$ for some $\ell \in \mathbb{Z}_{\geq 0}$, and
    \item $H_{Dp^{2\ell}}(a) \in \mathfrak{p}$ for all $\ell \in \mathbb{Z}_{\geq 0}$.
\end{enumerate}
\end{lem}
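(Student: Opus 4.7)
The plan is to prove the cycle $(1) \Rightarrow (2) \Rightarrow (3) \Rightarrow (1)$. The heart of the argument is Proposition \ref{prop:Hilbert_class_polynomial_mod_p}, which bridges $(2)$ and $(3)$; the other two implications reduce to straightforward manipulations of the factorization of $H_D$ over a splitting field.

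For $(1)\Rightarrow (2)$ I would take $\ell=0$: given $L$, $j$, $\mathfrak{P}$ as in $(1)$, substituting $a$ into $H_D$ gives $H_D(a) \equiv H_D(j) = 0 \pmod{\mathfrak{P}}$. Since $H_D$ has integer coefficients and $a \in \mathcal{O}_{K,S}$, we have $H_D(a) \in \mathcal{O}_{K,S}$, so $H_D(a) \in \mathfrak{P} \cap \mathcal{O}_{K,S} = \mathfrak{p}$ (the last equality because $\mathfrak{P}$ lies above $\mathfrak{p}$).

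For $(2)\Rightarrow (3)$ I would invoke Proposition \ref{prop:Hilbert_class_polynomial_mod_p}, which gives, for every $\ell \in \mathbb{Z}_{\geq 0}$, an integer $k_\ell \geq 1$ with
\[ H_{Dp^{2\ell}}(X) \equiv H_D(X)^{k_\ell} \pmod{p}. \]
Since $p \in \mathfrak{p}$, the assumption $H_{Dp^{2\ell_0}}(a) \in \mathfrak{p}$ forces $H_D(a)^{k_{\ell_0}} \in \mathfrak{p}$, and primality of $\mathfrak{p}$ yields $H_D(a) \in \mathfrak{p}$. Applying the congruence again for an arbitrary $\ell$ gives $H_{Dp^{2\ell}}(a) \equiv H_D(a)^{k_\ell} \in \mathfrak{p}$, establishing $(3)$.

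For $(3)\Rightarrow (1)$ I would again specialize to $\ell = 0$, obtaining $H_D(a) \in \mathfrak{p}$. Let $L$ be the splitting field of $H_D$ over $K$, so that $H_D(X) = \prod_{i=1}^{h(D)} (X - j_i)$ with each $j_i \in L$ a singular modulus of discriminant $D$. Pick any prime $\mathfrak{P}$ of the integral closure of $\mathcal{O}_{K,S}$ in $L$ lying above $\mathfrak{p}$; then $\prod_i (a-j_i) = H_D(a) \in \mathfrak{p} \subseteq \mathfrak{P}$, and primality of $\mathfrak{P}$ produces an index $i$ with $a \equiv j_i \pmod{\mathfrak{P}}$, which is precisely $(1)$. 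I do not foresee any genuine obstacle: once Proposition \ref{prop:Hilbert_class_polynomial_mod_p} is in hand the equivalence $(2)\Leftrightarrow (3)$ is immediate, and $(1)\Leftrightarrow(2)|_{\ell=0}$ is just the standard splitting-field reformulation of divisibility of a polynomial value by a prime.
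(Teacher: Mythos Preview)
Your proof is correct and follows exactly the same approach as the paper: the paper also proves $(1)\Rightarrow(2)$ and $(3)\Rightarrow(1)$ by taking $\ell=0$ (the latter with $L$ a splitting field of $H_D$ over $K$), and derives the equivalence $(2)\Leftrightarrow(3)$ directly from Proposition~\ref{prop:Hilbert_class_polynomial_mod_p}. You have simply spelled out the details that the paper leaves implicit.
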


\begin{proof}
Choosing $\ell=0$, we see immediately that (1) implies (2) and that (3) implies (1) with $L$ equal to a splitting field of $H_D$ over $K$. The equivalence of (2) and (3) follows from Proposition \ref{prop:Hilbert_class_polynomial_mod_p}.
\end{proof}

We are now ready to prove Theorem \ref{thm:modular_support_problem intro} that we restate here for the reader's convenience.

\begin{thm}\label{thm:modular_support_problem}
Let $K$ be a number field and let $S$ be a finite set of maximal ideals of $\mathcal{O}_K$. Let $j, j' \in \mathcal{O}_{K,S}$. Suppose that there exists $D_0 \in \mathbb{N}$ such that all the prime ideals of $\mathcal{O}_{K,S}$ dividing $H_D(j)$ also divide $H_D(j')$ for every $D \in \mathbb{D}$ with $|D| > D_0$. Then either $j = j'$ or there exists $\widetilde{D} \in \mathbb{D}$ such that $H_{\widetilde{D}}(j)=H_{\widetilde{D}}(j')=0$.
\end{thm}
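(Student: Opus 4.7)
The plan is to split the argument into three cases according to whether $j$ and $j'$ are singular moduli. The cases where $j$ is not a singular modulus rely on a common \emph{key observation} about reductions at primes of good ordinary reduction.

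\emph{Case A: $j$ is a singular modulus of discriminant $D_j$.} Since $H_{D_j}(j)=0$, Proposition~\ref{prop:Hilbert_class_polynomial_mod_p} gives $p \mid H_{D_j p^{2n}}(j)$ for every rational prime $p$ and every $n \geq 1$. Choosing $n$ large enough that $|D_j p^{2n}| > D_0$, the hypothesis yields $\mathfrak{p} \mid H_{D_j p^{2n}}(j')$ for every $\mathfrak{p} \subseteq \mathcal{O}_{K,S}$ above $p$; Lemma~\ref{lem:CM_modulo_prime} then upgrades this to $\mathfrak{p} \mid H_{D_j}(j')$. Varying $\mathfrak{p}$ over all primes of $\mathcal{O}_{K,S}$ forces $H_{D_j}(j') = 0$, and one may take $\widetilde{D} = D_j$.

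\emph{Setup and key observation.} From now on $j$ is not a singular modulus; fix a non-CM elliptic curve $E_{/K}$ with $j(E) = j$ of good reduction outside an enlarged $S$. For any maximal ideal $\mathfrak{p} \subseteq \mathcal{O}_{K,S}$ of residue characteristic $p$ at which $E$ has good ordinary reduction, let $D_\mathfrak{p} = \mathrm{disc}(\mathrm{End}(E_\mathfrak{p}))$ and $L_\mathfrak{p} = \mathbb{Q}(\sqrt{D_\mathfrak{p}})$. The key observation is the following: Deuring's lifting theorem gives $\mathfrak{p} \mid H_{D_\mathfrak{p}}(j)$, and the $p$-adic maximality of $\mathrm{End}(E_\mathfrak{p})$ (standard for ordinary abelian varieties over finite fields since $\pi\bar\pi = p$) forces $p$ to split in $L_\mathfrak{p}$ and $p \nmid D_\mathfrak{p}$, so the roots of $H_{D_\mathfrak{p}}$ modulo $\mathfrak{p}$ are exactly the $j$-invariants of ordinary elliptic curves over $\overline{k_\mathfrak{p}}$ with endomorphism ring $\mathcal{O}_{D_\mathfrak{p}}$. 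Consequently, for any elliptic curve $E'_{/K}$ with $j(E') = j'$ and good reduction outside $S$ (enlarge if needed), and whenever $|D_\mathfrak{p}| > D_0$, the support hypothesis at $D_\mathfrak{p}$ forces $E'_\mathfrak{p}$ to be ordinary with $\mathrm{End}(E'_\mathfrak{p}) = \mathcal{O}_{D_\mathfrak{p}}$; in particular $E_\mathfrak{p}$ and $E'_\mathfrak{p}$ are geometrically isogenous.

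\emph{Case B: $j'$ is a singular modulus of discriminant $D'$.} Then $E'$ has CM by an order in $L' = \mathbb{Q}(\sqrt{D'})$, so each reduction of $E'$ has either an endomorphism ring in $L'$ (ordinary case) or a quaternionic one (supersingular case). I engineer a prime $\mathfrak{p}$ contradicting the key observation: pick a fundamental discriminant $D^*$ coprime to $D'$ with $|D^*| > D_0$ and a prime $\ell^* \mid D^*$; apply Theorem~\ref{thm:Zarhin} with $\mathcal{O} = \mathcal{O}_{\mathbb{Q}(\sqrt{D^*})}$ and $\mathcal{P}$ the prime divisors of $D^*$. The resulting $\mathfrak{p}$ has $\mathrm{End}(E_\mathfrak{p}) \otimes \mathbb{Z}_{\ell^*}$ a ramified quadratic order, so $\ell^*$ ramifies in $L_\mathfrak{p}$, while it does not ramify in $L'$; hence $L_\mathfrak{p} \neq L'$, which contradicts the conclusion of the key observation.

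\emph{Case C: $j'$ is not a singular modulus.} Fix a non-CM $E'_{/K}$ with $j(E') = j'$ of good reduction outside $S$. By Serre, $E$ and $E'$ both have good ordinary reduction at a density-one set of primes $\mathfrak{p}$, and the further condition $|D_\mathfrak{p}| > D_0$ excludes only a density-zero subset (by the Hasse bound together with equidistribution of Frobenius traces). On this density-one set the key observation makes $E_\mathfrak{p}$ and $E'_\mathfrak{p}$ geometrically isogenous, so Khare--Larsen yields a geometric isogeny $E \to E'$ and thus $\Phi_N(j,j') = 0$ for some $N \in \mathbb{N}$. To show $N = 1$, suppose $N > 1$ and fix a prime $\ell_0 \mid N$. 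Choose a fundamental discriminant $D^*$ coprime to $N$ with $|D^*| > D_0$ and $\ell_0$ inert in $L^* := \mathbb{Q}(\sqrt{D^*})$, and apply Theorem~\ref{thm:Zarhin} with $\mathcal{P}$ the prime divisors of $N D^*$. The same local argument as in Case B shows $\ell_0$ is inert in $L_\mathfrak{p}$ and $\gcd(D_\mathfrak{p}, N p) = 1$; by the key observation $\mathrm{End}(E_\mathfrak{p}) = \mathrm{End}(E'_\mathfrak{p}) = \mathcal{O}_{D_\mathfrak{p}}$, and the cyclic $N$-isogeny $E \to E'$ (defined over a finite extension of $K$) reduces to a cyclic $N$-isogeny between $E_\mathfrak{p}$ and $E'_\mathfrak{p}$ since $p \nmid N$; Proposition~\ref{lem: no horizontal} then forbids any prime dividing $N$ from being inert in $L_\mathfrak{p}$, contradicting our choice of $\ell_0$. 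Hence $N = 1$ and $j = j'$. The main technical hurdle will be rigorously justifying the density-one step so that Khare--Larsen applies; the remaining steps are essentially a careful assembly of the results collected in Section~\ref{sec:modularsupportproblem}.
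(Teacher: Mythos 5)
Your overall architecture overlaps substantially with the paper's proof (Deuring lifting, Proposition \ref{prop:Hilbert_class_polynomial_mod_p} and Lemma \ref{lem:CM_modulo_prime}, Khare--Larsen, Theorem \ref{thm:Zarhin}, Proposition \ref{lem: no horizontal}), and Cases A and B are essentially correct. But the step you yourself flag in Case C is a genuine gap, and the justification you sketch would not close it. You claim that, among primes of good ordinary reduction, those with $|D_{\mathfrak{p}}| \leq D_0$ form a density-zero set ``by the Hasse bound together with equidistribution of Frobenius traces''. The condition $|D_{\mathfrak{p}}| \leq D_0$ says that $a_{\mathfrak{p}}^2 - 4N(\mathfrak{p})$ lies in one of finitely many square classes $Df^2$ with $|D| \leq D_0$; for a given norm $q = N(\mathfrak{p})$ this is compatible with on the order of $\sqrt{q}$ trace values, i.e.\ with a positive proportion of the Hasse interval, and equidistribution of $a_{\mathfrak{p}}/(2\sqrt{q})$ (a Sato--Tate type statement, itself not available over arbitrary number fields) carries no information about the square class of $a_{\mathfrak{p}}^2 - 4N(\mathfrak{p})$. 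The statement you need --- for a non-CM curve, a fixed imaginary quadratic Frobenius field occurs only for a density-zero set of primes --- is true, but it requires Serre's open-image theorem plus a Chebotarev argument detecting the square class modulo several auxiliary primes (or known Lang--Trotter-type upper bounds for fixed Frobenius fields), a much heavier input than what you invoke. Moreover it is unnecessary: at an \emph{arbitrary} prime $\mathfrak{p}$ of good reduction (no ordinarity, no control of $|D_{\mathfrak{p}}|$), Deuring lifting gives some $D$ with $\mathfrak{p} \mid H_D(j)$; even if $|D| \leq D_0$, replace $D$ by $Dp^{2n}$ with $n$ large via Lemma \ref{lem:CM_modulo_prime}, apply the support hypothesis there, and descend back to $D$ for $j'$. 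This makes the reductions of $E$ and $E'$ geometrically isogenous at all but finitely many $\mathfrak{p}$, which is exactly the density-one input needed for \cite{KhareLarsen_Preprint}. This is the paper's route, and you should substitute it for your density argument.

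Two smaller points. In Cases B and C you apply the key observation at a Zarhin prime $\mathfrak{p}$ without checking its hypothesis $|D_{\mathfrak{p}}| > D_0$; this does follow from your local conditions, since every prime dividing the fundamental discriminant $D^{*}$ ramifies in $L_{\mathfrak{p}}$ and is prime to the conductor of $\End(E_{\mathfrak{p}})$, so that (taking $D^{*} \equiv 1 \bmod 4$, say) $|D^{*}|$ divides $|D_{\mathfrak{p}}|$ --- but you must say so. (The paper arranges this differently, by putting the primes dividing $N\bigl(\prod_{D \in \mathbb{D},\,|D| \leq D_0} H_D(j)\bigr)$ into the set $\mathcal{P}$ of Theorem \ref{thm:Zarhin}.) You should also identify $\End_{k_{\mathfrak{p}}}(E_{\mathfrak{p}})$ with the geometric endomorphism ring via Lemma \ref{lem:endomorphisms_ordinary_elliptic_curve}, and in Case A note that $H_{D_jp^{2n}}(j) \neq 0$ since $j$ has discriminant $D_j$. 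With these repairs your decomposition is a legitimate variant: Case A is a clean simplification of the paper's ``both CM'' case (which compares discriminants using two split primes), Case B disposes of the mixed case directly where the paper excludes it via the Khare--Larsen isogeny, and the endgame of Case C coincides with the paper's Case 1.
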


\begin{proof}
Denote by $E_{j}$, $E_{j'}$ any two fixed elliptic curves over $K$ with $j$-invariants $j$, $j'$ respectively. We begin by showing that, under the hypothesis of the theorem, the two curves are geometrically isogenous.

Fix an algebraic closure $\overline{K}$ of $K$. Let $\mathfrak{p} \subseteq \mathcal{O}_{K,S}$ be a prime ideal of good reduction for $E_j$ and $E_{j'}$ and let $\mathfrak{P}$ be a prime of $\overline{K}$ lying above it. By Deuring's lifting theorem \cite[Chapter 13, Theorem 14]{Lang_1987}, there exist a discriminant $D \in \mathbb{D}$ and an elliptic curve $(E_D)_{/\overline{K}}$ with complex multiplication by the order of discriminant $D$ such that $E_D \text{ mod } \mathfrak{P} \simeq E_j \text{ mod } \mathfrak{P}$. In particular, $\mathfrak{p} \mid H_D(j)$ so that by Lemma \ref{lem:CM_modulo_prime} $\mathfrak{p} \mid H_{Dp^{2n}}(j)$ for every $n \in \mathbb{N}$, where $p$ denotes the residue characteristic of $\mathfrak{p}$. Choosing $n$ large enough, we deduce from the hypothesis of the theorem that $\mathfrak{p} \mid H_{Dp^{2n}}(j')$ and then, again by Lemma \ref{lem:CM_modulo_prime}, that $\mathfrak{p} \mid H_{D}(j')$. Hence, there exists an elliptic curve $(E'_D)_{/\overline{K}}$ with complex multiplication by the order of discriminant $D$ such that $E'_D \text{ mod } \mathfrak{P} \simeq E_{j'} \text{ mod } \mathfrak{P}$. Since $E_D$ and $E'_D$ are isogenous, the same must be true for $E_j \text{ mod } \mathfrak{P}$ and $E_{j'} \text{ mod } \mathfrak{P}$. Hence, for all but finitely many maximal ideals $\mathfrak{p}$ of $\mathcal{O}_{K,S}$, the reduced elliptic curves $E_j$ and $E_{j'}$ modulo $\mathfrak{p}$ are geometrically isogenous. By \cite[Theorem 1]{KhareLarsen_Preprint} we conclude that $(E_j)_{\overline{K}}$ and $(E_{j'})_{\overline{K}}$ are isogenous. In particular, the geometric endomorphism rings of $E_j$ and $E_{j'}$ must have the same $\mathbb{Z}$-rank. 

It now suffices to consider two cases.

\textbf{Case 1:} Both $E_j$ and $E_{j'}$ do not have complex multiplication.

Let $\varphi: (E_j)_{\overline{K}} \to (E_{j'})_{\overline{K}}$ be an isogeny and set $d = \deg \varphi$. We can assume without loss of generality that $\varphi$ has cyclic kernel. If $d=1$, then $\varphi$ is an isomorphism and $j=j'$. Suppose then by contradiction that $d>1$ and hence $j \neq j'$ since $E_{j}$ does not have complex multiplication. We want to apply Theorem \ref{thm:Zarhin} with the following inputs:
\begin{itemize}
    \item We take $\mathcal{P}$ to be the set of rational primes dividing $dN(\mathfrak{K})$, where $\mathfrak{K} \subseteq \mathcal{O}_{K,S}$ denotes the ideal generated by $\prod_{D \in \mathbb{D},\hspace{2pt}|D|\leq D_0}{H_D(j)}$. Note that $\mathcal{P}$ is finite since $j$ is not a singular modulus, and $\mathcal{P} \neq \emptyset$ because $d>1$ by assumption.
    \item We take $L$ to be an imaginary quadratic field where all primes $\ell \in \mathcal{P}$ are inert. Such a field certainly exists by Dirichlet's theorem on primes in arithmetic progressions. We also take $\mathcal{O}=\mathcal{O}_L$ to be the ring of integers in $L$.
\end{itemize}

Since $E_j$ does not have complex multiplication and $E_j$ and $E_{j'}$ are geometrically isogenous, by Theorem \ref{thm:Zarhin} there exist infinitely many maximal ideals $\mathfrak{p}\subseteq \mathcal{O}_{K,S}$ such that $\mathfrak{p} \nmid d\mathfrak{K}$, $E_j$ and $E_{j'}$ have good ordinary reduction modulo $\mathfrak{p}$, and 
\begin{equation} \label{eq:tensor_in_Zarhin_theorem}
    \End_{k_{\mathfrak{p}}}(E_j \text{ mod } \mathfrak{p}) \otimes_{\mathbb{Z}} \mathbb{Z}_\ell \simeq \mathcal{O} \otimes_{\mathbb{Z}} \mathbb{Z}_\ell=:\mathcal{O}_\ell
\end{equation}
for all $\ell \in \mathcal{P}$. In particular, denoting by $\mathcal{A}$ this infinite set of primes obtained from Theorem \ref{thm:Zarhin} and setting $R_\mathfrak{p}:=\End_{k_{\mathfrak{p}}}(E_j \text{ mod } \mathfrak{p})$ for all primes $\mathfrak{p} \in \mathcal{A}$, we deduce from our choice of $L$ and from \eqref{eq:tensor_in_Zarhin_theorem} that all primes $\ell \in \mathcal{P}$ are inert in the fraction field $L_\mathfrak{p}$ of $R_\mathfrak{p}$ for every $\mathfrak{p} \in \mathcal{A}$. Moreover, \eqref{eq:tensor_in_Zarhin_theorem} also implies that the conductor of $R_\mathfrak{p}$ is not divisible by any $\ell \in \mathcal{P}$ since, by \cite[Equation (1.8) on p. 109]{Froehlich_Taylor_book}, $\mathcal{O}_\ell$ is the ring of integers of the local field $L \otimes_{\mathbb{Z}} \mathbb{Z}_\ell$, which is isomorphic to a quadratic extension of $\mathbb{Q}_{\ell}$. Finally, we remark that $R_\mathfrak{p}$ can be identified with $\End_{\overline{k}_\mathfrak{p}} (E_j \text{ mod } \mathfrak{p})$ by Lemma \ref{lem:endomorphisms_ordinary_elliptic_curve}, where $\overline{k}_\mathfrak{p}$ denotes a fixed algebraic closure of $k_{\mathfrak{p}}$.

Fix now $\mathfrak{p} \in \mathcal{A}$ and set $\widetilde{E}_j:=E_j \text{ mod } \mathfrak{p}$. By Deuring's lifting theorem \cite[Chapter 13, Theorem 14]{Lang_1987}, there exist an elliptic curve $E_{/\overline{K}}$ with complex multiplication by an imaginary quadratic order of some discriminant $D$ and a prime $\mathfrak{P} \subseteq \overline{K}$ lying above $\mathfrak{p}$ such that $E \text{ mod } \mathfrak{P} \simeq (\widetilde{E}_j)_{\overline{k}_\mathfrak{p}}$, where we identify the residue field of $\mathfrak{P}$ and $\overline{k}_\mathfrak{p}$ via a fixed isomorphism. In particular, $\mathfrak{p} \mid H_D(j)$. Since by construction $\mathfrak{p} \nmid \mathfrak{K}$, we must have $|D| > D_0$ so that $\mathfrak{p} \mid H_D(j')$ by hypothesis. This implies that $\widetilde{E}_{j'}:=E_{j'} \text{ mod } \mathfrak{p}$ and $\widetilde{E}_{j}$ have complex multiplication by the same imaginary quadratic order $R_\mathfrak{p}$ by \cite[Chapter 13, Theorem 12]{Lang_1987}. Moreover, by \cite[II, Proposition 4.4]{Silverman_book_1994} the reduction of $\varphi$ modulo $\mathfrak{P}$ gives an isogeny $\widetilde{\varphi}: (\widetilde{E}_{j})_{\overline{k}_{\mathfrak{p}}} \to (\widetilde{E}_{j'})_{\overline{k}_{\mathfrak{p}}}$ of degree $d$. Since the residue characteristic $p$ of $\mathfrak{p}$ does not divide $d$, the kernel of $\widetilde{\varphi}$ must be equal to the reduction of $\ker \varphi$ modulo $\mathfrak{P}$ by \cite[III, Theorem 4.10~(c), and VII, Proposition 3.1~(b)]{Silverman_book_2009}. Hence, $\widetilde{\varphi}$ has cyclic kernel. Since every prime dividing $d$ does not divide $\mathrm{disc}(R_{\mathfrak{p}})p$ and is inert in $L_{\mathfrak{p}}$ by construction, but $d > 1$, we can apply Proposition \ref{lem: no horizontal} to deduce a contradiction. Hence $j=j'$ and this concludes the proof in this case.

\textbf{Case 2:} Both $E_j$ and $E_{j'}$ have complex multiplication.

Let $D_j \in \mathbb{D}$ be the discriminant of $\End_{\overline{K}}(E_j)$. Take $p \in \mathbb{N}$ to be a prime such that $\left( \frac{D_j}{p} \right)=1$ and let $E_{/\overline{K}}$ be an elliptic curve with complex multiplication by the imaginary quadratic order of discriminant $p^{2n} D_j$ for some fixed $n \in \mathbb{N}$. By \cite[Chapter 13, Theorem 12]{Lang_1987}, for every prime $\mathfrak{P} \subseteq \overline{K}$ lying above $p$ the reduction of $E$ modulo $\mathfrak{P}$ is an elliptic curve with complex multiplication by the imaginary quadratic order of discriminant $D_j$ and by Proposition \ref{prop:Hilbert_class_polynomial_mod_p} we can choose $E$ in such a way that $\mathfrak{P} \mid (j-j(E))$ and hence $\mathfrak{p} \mid H_{p^{2n} D_j}(j)$.

If we now choose $n$ sufficiently large, the hypothesis of the theorem implies that we also have that $\mathfrak{p} \mid H_{p^{2n} D_j}(j')$. Again, it follows from \cite[Chapter 13, Theorem 12]{Lang_1987} that $E_{j'}$ has complex multiplication by an order of discriminant $D_{j'}=p^{2k} D_j$ for some $k \in \mathbb{Z}_{\geq 0}$. Arguing in the same way with a prime $\ell \neq p$ such that $\left( \frac{D_j}{\ell} \right)=1$, we conclude that there exists $r \in \mathbb{Z}_{\geq 0}$ such that
\[
D_{j'}=p^{2k} D_j= \ell^{2r} D_j
\]
and, since $\gcd(D_j, p\ell)=1$, we deduce that $D_{j'} = D_j$. In particular, we have $H_{D_j}(j)=H_{D_j}(j')=H_{D_{j'}}(j')=0$ as desired.
\end{proof}

\begin{rmk}
In the proof of Theorem \ref{thm: support problem in function fields}, we knew, for algebro-geometric reasons, that all but finitely many imaginary quadratic orders could be obtained as the geometric endomorphism rings of specializations of the elliptic curve with $j$-invariant $A$. For the proof of Theorem \ref{thm:modular_support_problem}, we only have Theorem \ref{thm:Zarhin} by Zarhin, which yields a weaker analogue of this statement. However, this analogous statement is still strong enough to allow us to fabricate a situation where Proposition \ref{lem: no horizontal} applies.
\end{rmk}

In the case where $H_{\widetilde{D}}(j) = H_{\widetilde{D}}(j') = 0$ for some $\widetilde{D} \in \mathbb{D}$, it is unclear whether the support property can be satisfied for all but finitely many $D \in \mathbb{D}$. Numerical experiments seem to suggest that this should be true only if $j = j'$. On the other hand, we will now present an example of two distinct Galois conjugate singular moduli for which the support property holds for $25\%$ of all $D \in \mathbb{D}$. In particular, if the support property holds for infinitely many $D \in \mathbb{D}$, this does not imply that $j = j'$.

\begin{thm}\label{thm:the lonely theorem}
Let 
\[
j_1=\frac{-191025-85995\sqrt{5}}{2} \quad \mbox{ and } \quad j_2=\frac{-191025+85995\sqrt{5}}{2}
\]
be the two singular moduli of discriminant $-15$ in $\overline{\mathbb{Q}}$. Then for every discriminant $D\in \mathbb{D}$ with $D\equiv 1 \mbox{ \emph{mod} } 8$, the support property holds in both directions, \emph{i.e.}, for every maximal ideal $\mathfrak{p}$ of $\mathbb{Z}\left[\frac{-1+\sqrt{5}}{2}\right]$ we have 
\[
\mathfrak{p} \mid H_D(j_1) \quad \Longleftrightarrow \quad \mathfrak{p} \mid H_D(j_2).
\]
\end{thm}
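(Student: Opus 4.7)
Set $K=\mathbb{Q}(\sqrt{5})=\mathbb{Q}(j_1)=\mathbb{Q}(j_2)$ and let $\sigma$ denote the nontrivial element of $\mathrm{Gal}(K/\mathbb{Q})$. Since $H_D\in\mathbb{Z}[T]$ and $\sigma(j_1)=j_2$, the identity $\sigma(H_D(j_1))=H_D(j_2)$ reduces the support condition $\mathfrak{p}\mid H_D(j_1)\iff\mathfrak{p}\mid H_D(j_2)$ to the assertion $\mathfrak{p}\mid H_D(j_1)\iff\sigma(\mathfrak{p})\mid H_D(j_1)$. This is automatic whenever $\mathfrak{p}$ is inert or ramified in $K/\mathbb{Q}$, so one may assume that $\mathfrak{p}$ lies above a rational prime $p$ split in $K$, i.e.\ $p\equiv\pm 1\pmod 5$. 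The explicit computation
\[
j_1-j_2 \;=\; -85995\sqrt{5} \;=\; -3^3\cdot 5\cdot 7^2\cdot 13\cdot\sqrt{5}
\]
shows that no split prime of $\mathcal{O}_K$ can divide $j_1-j_2$ (the primes $3,7,13$ are inert in $K$ while $5$ is ramified), so $j_1\not\equiv j_2\pmod{\mathfrak{p}}$, and the reductions $\tilde E_1,\tilde E_2$ of fixed global models with $j(E_i)=j_i$ at any prime of $\overline K$ above $\mathfrak{p}$ are non-isomorphic in $\overline{\mathbb{F}}_p$.

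I would then distinguish according to whether $p$ splits in $\mathbb{Q}(\sqrt{-15})$. In the \emph{ordinary} case, Deuring's theory gives $\End(\tilde E_i)=\mathcal{O}_{-15}$, and since $h(-15)=2$ the pair $\{\tilde E_1,\tilde E_2\}$ exhausts $\mathrm{Ell}(\mathcal{O}_{-15})(\overline{\mathbb{F}}_p)$. Using $\Pic(\mathcal{O}_D)$-equivariance of the reduction map together with \cref{prop:Hilbert_class_polynomial_mod_p} to absorb any $p$-power conductor, one sees that $\mathfrak{p}\mid H_D(j_i)$ holds iff $D=-15\cdot p^{2k}$ for some $k\ge 0$, and in that case both $\mathfrak{p}\mid H_D(j_1)$ and $\mathfrak{p}\mid H_D(j_2)$ hold; this sub-case is therefore fine for every $D\in\mathbb{D}$ and does not require $D\equiv 1\pmod 8$. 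In the \emph{supersingular} case, i.e.\ when $p$ does not split in $\mathbb{Q}(\sqrt{-15})$, the conditions $\left(\tfrac{5}{p}\right)=1$ and $\left(\tfrac{-15}{p}\right)=-1$ force $p\neq 3,5$ and $\left(\tfrac{-3}{p}\right)=-1$. Both $\tilde E_1,\tilde E_2$ are then supersingular and $2$-isogenous via the reduction of the CM isogeny $E_1\to E_2$ attached to the split prime $\mathfrak{l}_2\mid 2$ of $\mathcal{O}_{-15}$ (split since $-15\equiv 1\pmod 8$).

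The hypothesis $D\equiv 1\pmod 8$ enters precisely here: it forces $2$ to split in $\mathcal{O}_D$, producing an invertible prime $\mathfrak{L}\mid 2$ whose class in $\Pic(\mathcal{O}_D)$ acts on singular moduli of discriminant $D$ through a $2$-isogeny. My plan is to prove that this action sends the set $\{j\text{ of disc. }D:\tilde E_j\cong\tilde E_1\}$ bijectively onto $\{j:\tilde E_j\cong\tilde E_2\}$, which would yield the support equivalence at once. Concretely, for a lift $E_j$ of $\tilde E_1$ with CM by $\mathcal{O}_D$, the isogeny $E_j\to E_j/E_j[\mathfrak{L}]$ reduces to a $2$-isogeny from $\tilde E_j\cong\tilde E_1$ to one of its three $2$-neighbours in the supersingular isogeny graph, and the goal is to show that it lands in $\tilde E_2$. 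This amounts to matching the reduced subgroup $\tilde E_j[\mathfrak{L}]$ with $\tilde E_1[\mathfrak{l}_2]$ under the identification $\tilde E_j\cong\tilde E_1$. The \textbf{main obstacle} is exactly this $2$-adic matching: both $\mathcal{O}_D\otimes\mathbb{Z}_2$ and $\mathcal{O}_{-15}\otimes\mathbb{Z}_2$ are isomorphic to $\mathbb{Z}_2\oplus\mathbb{Z}_2$ (precisely because $D\equiv -15\equiv 1\pmod 8$) and embed as optimal suborders into $\End(\tilde E_1)\otimes\mathbb{Z}_2\cong M_2(\mathbb{Z}_2)$; one must show that the resulting orthogonal idempotents pick out the same eigenline decomposition of $\tilde E_1[2]$. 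This requires a careful local analysis of optimal embeddings of split quadratic orders into the maximal quaternion order at $2$, and is where the $8$-divisibility of $D-1$ is essential, accounting for the failure of the support property in other congruence classes (e.g.\ $D\equiv 5\pmod 8$, where $2$ becomes inert in $\mathcal{O}_D$).
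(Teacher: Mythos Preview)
Your reductions via the Galois involution $\sigma$ and your handling of the ordinary sub-case are correct, but the proposal has a genuine gap in the supersingular case: you identify the ``$2$-adic matching'' of idempotents as the main obstacle and then stop. As stated, your goal---that the reduction of $E_j[\mathfrak{L}]$ coincides with $\tilde E_1[\mathfrak{l}_2]$ under the identification $\tilde E_j\cong\tilde E_1$---is stronger than what is needed and may well fail for some lifts $E_j$: different optimal embeddings of $\mathcal{O}_D\otimes\mathbb{Z}_2\cong\mathbb{Z}_2\times\mathbb{Z}_2$ into $M_2(\mathbb{Z}_2)$ can yield different eigendecompositions of $\tilde E_1[2]$, so there is no reason the $\mathfrak{L}$-kernel should single out the $\mathfrak{l}_2$-kernel rather than the $\mathfrak{l}_2'$-kernel or the third subgroup. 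The support statement only requires that \emph{some} root of $H_D$ reduce to $\tilde j_2$, not that the $[\mathfrak{L}]$-action realise a bijection.

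The paper bypasses this entirely by a pigeonhole argument that moreover works uniformly for all $\mathfrak{p}\nmid 2$, with no ordinary/supersingular dichotomy. The key observation you are missing is that in $\mathrm{Pic}(\mathcal{O}_{-15})\cong\mathbb{Z}/2\mathbb{Z}$ one has $[\mathfrak{p}_2]=[\mathfrak{p}_2']$ (both nontrivial, since their product is principal), so \emph{both} invertible primes above $2$ send $E_1$ to $E_2$: there are two inequivalent degree-$2$ isogenies $\varphi_1,\varphi_2:E_1\to E_2$. Reducing modulo $\mathfrak{P}$ (with $\mathfrak{p}\nmid 2$, so reduction is injective on $2$-torsion), their kernels give two of the three order-$2$ subgroups of $\tilde E_1[2]$. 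A disc-$D$ lift $E_j$ of $\tilde E_1$ likewise admits two inequivalent degree-$2$ isogenies $\psi_1,\psi_2$ to disc-$D$ curves (this is where $D\equiv 1\pmod 8$ is used), and their reduced kernels are again two distinct order-$2$ subgroups of $\tilde E_1[2]$. Since $2+2>3$, some $\tilde\psi_k$ shares its kernel with some $\tilde\varphi_i$, hence its target is isomorphic to $\tilde E_2$, and $\mathfrak{p}\mid H_D(j_2)$. No local analysis of optimal embeddings is required.
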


\begin{proof}
In this proof, we will use the notion of two isogenies being equivalent, which we now recall. Let $F$ be an arbitrary algebraically closed field. If $E_1, E_2$, and $E_3$ are three elliptic curves over $F$ and if $\varphi:E_1 \to E_2$ and $\psi:E_1 \to E_3$ are two isogenies, we say that $\varphi$ and $\psi$ are \textit{equivalent} if there exists an isomorphism $\xi:E_2 \to E_3$ such that $\xi \circ \varphi=\psi$. If $\deg \varphi$ and $\deg \psi$ are not divisible by $\text{char}(F)$ and so $\varphi$ and $\psi$ are separable, then this is the same as requiring that $\ker \varphi= \ker \psi$ by \cite[III, Corollary 4.11]{Silverman_book_2009}. Of course, if $\varphi$ and $\psi$ are equivalent, then $j(E_2)=j(E_3)$.

We fix an embedding $\overline{\mathbb{Q}}\hookrightarrow \mathbb{C}$. Let $E_{/\mathbb{C}}$ be an elliptic curve with complex multiplication by an imaginary quadratic order $\mathcal{O} \subseteq \overline{\mathbb{Q}}$ of discriminant $D \equiv 1 \text{ mod } 8$ and fix an isomorphism $[\cdot]_E: \mathcal{O} \to \End_{\mathbb{C}}(E)$. The hypothesis that $D\equiv 1 \text{ mod } 8$ implies that $2\mathcal{O}=\mathfrak{p}_2 \mathfrak{p}_2'$ where $\mathfrak{p}_2, \mathfrak{p}_2' \subseteq \mathcal{O}$ are distinct Galois conjugate invertible ideals of norm $2$ (see  \cite[Proposition 7.20]{Cox_book_2013}). We now apply \cite[II, Proposition 1.2~(a)~(ii)]{Silverman_book_1994} with $\mathfrak{a} = \mathfrak{p}_2^{-1}$ to deduce that the endomorphism ring of the quotient $(E_{\mathfrak{p}_2})_{/\mathbb{C}}$ of $E$ by the $\mathfrak{p}_2$-torsion subgroup 
\[
E[\mathfrak{p}_2]:=\{P\in E(\mathbb{C}); [\alpha]_E(P)=0 \text{ for all } \alpha \in \mathfrak{p}_2 \}
\]
is isomorphic to $\mathcal{O}$. For this, note that \cite[II, Proposition 1.2~(a)~(ii)]{Silverman_book_1994}, although formulated only for maximal orders, also holds for an arbitrary order (with the same proof) as long as the fractional ideal $\mathfrak{a}$ is invertible. Let $\varphi: E \to E_{\mathfrak{p}_2}$ denote an isogeny with kernel $E[\mathfrak{p}_2]$. The degree of $\varphi$ is equal to the norm of $\mathfrak{p}_2$, which is $2$. Similarly, there exists an isogeny $\varphi': E \to E_{\mathfrak{p}'_2}$ of degree $2$ with kernel $E[\mathfrak{p}_2']$ such that $(E_{\mathfrak{p}'_2})_{/\mathbb{C}}$ is an elliptic curve with complex multiplication by $\mathcal{O}$ where $E[\mathfrak{p}_2']$ is defined analogously to $E[\mathfrak{p}_2]$. The isogenies $\varphi, \varphi'$ are not equivalent since $\mathfrak{p}_2 + \mathfrak{p}'_2 = \mathcal{O}$ and therefore $E[\mathfrak{p}_2]\neq E[\mathfrak{p}_2']$. This shows in particular that every elliptic curve with complex multiplication by an imaginary quadratic order of discriminant $D\equiv 1 \text{ mod } 8$ admits two nonequivalent isogenies of degree $2$ towards elliptic curves with complex multiplication by the same order.

In the case $D=-15$, these isogenies can be described explicitly in complex analytic terms. For $k\in \{1,2\}$, denote by $(E_k)_{/\overline{\mathbb{Q}}}$ a fixed elliptic curve such that $j(E_k)=j_k$. Since $2\mathcal{O}= \mathfrak{p}_2 \mathfrak{p}'_2$ where $\mathfrak{p}_2=\left(2, \frac{1+\sqrt{-15}}{2} \right)$ is non-principal, it follows from \cite[II, Proposition 1.2]{Silverman_book_1994} that there exist complex analytic isomorphisms
\[
\xi_1: E_i(\mathbb{C}) \to \mathbb{C}/\mathcal{O}, \hspace{0.5cm} \xi_2: E_k(\mathbb{C}) \to \mathbb{C}/\mathfrak{p}_2, \quad\{i,k\} = \{1,2\}.
\]
We then see that the map $z\mapsto 2z$ induces an isogeny $\mathbb{C}/\mathcal{O} \to \mathbb{C}/\mathfrak{p}_2$ with kernel $\mathfrak{p}_2'^{-1}/\mathcal{O}$ and an isogeny $\mathbb{C}/\mathcal{O} \to \mathbb{C}/\mathfrak{p}'_2$ with kernel $\mathfrak{p}_2^{-1}/\mathcal{O}$. Since $\mathfrak{p}_2$ and $\mathfrak{p}'_2$ are in the same ideal class in $\mathrm{Pic}(\mathcal{O})$, we have $\mathbb{C}/\mathfrak{p}_2 \simeq \mathbb{C}/\mathfrak{p}'_2$ and we thus obtain two inequivalent isogenies $\varphi_1, \varphi_2: (E_i)_{\mathbb{C}} \to (E_k)_{\mathbb{C}}$. Since $E_1$ and $E_2$ are defined over $\overline{\mathbb{Q}}$, both these isogenies are base changes of isogenies $E_i \to E_k$ that we will denote by $\varphi_1$ and $\varphi_2$ as well. We can assume without loss of generality, after maybe applying an automorphism of $\overline{\mathbb{Q}}$ which maps $\sqrt{5}$ to $-\sqrt{5}$, that $i = 1$ and $k = 2$.

We are now ready to conclude the proof of the theorem. Since $j_1$ and $j_2$ are Galois conjugate, it suffices to prove that $\mathfrak{p} \mid H_D(j_1)$ implies that $\mathfrak{p} \mid H_D(j_2)$ for every maximal ideal $\mathfrak{p}$ of $\mathbb{Z}\left[\frac{-1+\sqrt{5}}{2}\right]$. Fix a negative discriminant $D\equiv 1 \text{ mod } 8$, a maximal ideal $\mathfrak{p}$ of $\mathbb{Z}\left[\frac{-1+\sqrt{5}}{2}\right]$ such that $\mathfrak{p} \mid H_D(j_1)$, and a prime $\mathfrak{P}\subseteq \overline{\mathbb{Q}}$ lying above $\mathfrak{p}$. Since $E_1$ and $E_2$ have complex multiplication, they both have good reduction at $\mathfrak{P}$. As a first step, assume that $\mathfrak{p}$ lies above $2$. Since $2$ is inert in $\mathbb{Q}(\sqrt{5})$ and $j_1,j_2$ are Galois conjugate, we deduce that $\mathfrak{p} = (2)$ and $\mathfrak{p} \mid H_D(j_1) \Leftrightarrow \mathfrak{p} \mid H_D(j_2)$.

Assume from now on that $\mathfrak{p}$ does not lie above $2$ and that $\mathfrak{p}$ divides $H_D(j_1)$. To ease notation, we will use a tilde $\widetilde{\cdot}$ to denote the reduction of some object (curve, morphism, etc.) modulo $\mathfrak{P}$. It follows from our assumption that there exists $j\in \overline{\mathbb{Q}}$ such that $H_D(j)=0$ and $j_1 \equiv j \text{ mod } \mathfrak{P}$. Fix an elliptic curve $(E_j)_{/\overline{\mathbb{Q}}}$ with $j(E_j)=j$. The above discussion shows that $E_j$ admits at least two non-equivalent degree-$2$ isogenies $\psi_k: E_j \to E_{D,k}$ for $k\in \{1,2\}$, where $\End_{\overline{\mathbb{Q}}}(E_{D,k}) \simeq \mathbb{Z}\left[ \frac{D+\sqrt{D}}{2}\right]$. We set $j_{D,k}:=j(E_{D,k})$ so that $H_D(j_{D,k})=0$ for $k \in \{1,2\}$ (it is possible that $j_{D,1}=j_{D,2}$, \emph{e.g.}, if $D = -15$). Since the aforementioned elliptic curves all have complex multiplication, they all have good reduction at $\mathfrak{P}$. Let $E_j[2]$ denote the $2$-torsion subgroup of $E_j$, then the reduction map $E_j[2] \to \widetilde{E_j[2]}$ is injective by \cite[VII, Proposition 3.1~(b)]{Silverman_book_2009}. It follows that, for $k \in \{1,2\}$, the kernel of $\widetilde{\psi}_k$ is precisely the reduction modulo $\mathfrak{P}$ of $\ker{\psi_k} \subseteq E_j[2]$ since $\deg \widetilde{\psi}_k = \deg \psi_k = 2$ by \cite[II, Proposition 4.4]{Silverman_book_1994}. Hence, $\ker{\widetilde{\psi}_1} \neq \ker{\widetilde{\psi}_2}$ and so the reduced isogenies $\widetilde{\psi}_1, \widetilde{\psi}_2$ are nonequivalent. Similarly, it follows that also the reduced isogenies $\widetilde{\varphi_1}$ and $\widetilde{\varphi_2}$ are nonequivalent. On the other hand, the elliptic curve $\widetilde{E}_1 \simeq \widetilde{E}_j$ cannot possess more than three nonequivalent isogenies of degree $2$ since there are only three distinct subgroups of order $2$ of the $2$-torsion subgroup of $\widetilde{E_1}$. Hence, there exist $i,k \in \{1,2\}$ such that the isogeny $\widetilde{\varphi}_i$ is equivalent to the isogeny $\widetilde{\psi}_k$. This in particular implies that $\widetilde{j_2} = \widetilde{j}_{D,k}$ and we conclude that $\mathfrak{p}$ divides $H_D(j_2)$ as desired. 
\end{proof}

\section*{Acknowledgements}
We thank Jorge Mello, Pieter Moree, and Jonathan Pila for useful discussions and suggestions. The first author is grateful to the Max-Planck-Institut f\"ur Mathematik  in Bonn for its hospitality and financial support. He is also supported by ANR-20-CE40-0003 Jinvariant. The second author was supported by the Swiss National Science Foundation through the Early Postdoc.Mobility grant no. P2BSP2\_195703. He thanks the Mathematical Institute of the University of Oxford and his host there, Jonathan Pila, for hosting him as a visitor for the duration of this grant. He also thanks the first author for his hospitality.

\vspace{\baselineskip}
\noindent
\framebox[\textwidth]{
\begin{tabular*}{0.96\textwidth}{@{\extracolsep{\fill} }cp{0.84\textwidth}}
\raisebox{-0.7\height}{%
    \begin{tikzpicture}[y=0.80pt, x=0.8pt, yscale=-1, inner sep=0pt, outer sep=0pt, 
    scale=0.12]
    \definecolor{c003399}{RGB}{0,51,153}
    \definecolor{cffcc00}{RGB}{255,204,0}
    \begin{scope}[shift={(0,-872.36218)}]
      \path[shift={(0,872.36218)},fill=c003399,nonzero rule] (0.0000,0.0000) rectangle (270.0000,180.0000);
      \foreach \myshift in 
           {(0,812.36218), (0,932.36218), 
    		(60.0,872.36218), (-60.0,872.36218), 
    		(30.0,820.36218), (-30.0,820.36218),
    		(30.0,924.36218), (-30.0,924.36218),
    		(-52.0,842.36218), (52.0,842.36218), 
    		(52.0,902.36218), (-52.0,902.36218)}
        \path[shift=\myshift,fill=cffcc00,nonzero rule] (135.0000,80.0000) -- (137.2453,86.9096) -- (144.5106,86.9098) -- (138.6330,91.1804) -- (140.8778,98.0902) -- (135.0000,93.8200) -- (129.1222,98.0902) -- (131.3670,91.1804) -- (125.4894,86.9098) -- (132.7547,86.9096) -- cycle;
    \end{scope}
    \end{tikzpicture}%
}
&
Gabriel Dill has received funding from the European Research Council (ERC) under the European Union’s Horizon 2020 research and innovation programme (grant agreement n$^\circ$ 945714).
\end{tabular*}
}

\printbibliography

\end{document}